%
%
%
\documentclass{amsproc}
\usepackage[all]{xy}

\makeatletter
\newcommand{\xyC}[1]{%
\makeatletter
\xydef@\xymatrixcolsep@{#1}
\makeatother
} 

\makeatletter
\newcommand{\xyR}[1]{%
\makeatletter
\xydef@\xymatrixrowsep@{#1}
\makeatother
} 

\newtheorem{theorem}{Theorem}[section]

\theoremstyle{definition}

\newtheorem{example}[theorem]{Example}

\newtheorem{proposition}{Proposition}

\newtheorem{prop}[proposition]{Proposition}
\newtheorem{thm}[proposition]{Theorem}

\newtheorem{lem}[proposition]{Lemma}
\newtheorem{conj}[proposition]{Conjecture}
\newtheorem{define}[proposition]{Definition}
\newtheorem{rem}[proposition]{Remark}

\theoremstyle{remark}

\numberwithin{equation}{section}



\begin{document}

\title{Eta-invariants and Anomalies in $U(1)$-Chern-Simons theory}

\author{Lisa Jeffrey}
\address{Department of Mathematics, University of Toronto, Toronto,
Ontario, Canada M5S 2E4}
\email{jeffrey@math.toronto.edu}
\thanks{The first author was supported in part by a grant from NSERC}

\author{Brendan McLellan}
\address{Department of Mathematics, University of Toronto, Toronto,
Ontario, Canada M5S 2E4}
\email{mclellan@math.toronto.edu}
\thanks{We would like to thank John Bland, Eckhard Meinrenken,
Rapha\"{e}l Ponge, Edward Witten and especially
Fr\'ed\'eric Rochon and Michel Rumin for
helpful advice related to this work.}

\subjclass{Primary 54C40}
\date{January 15, 2010}


\keywords{Contact geometry, quantum field theory}

\begin{abstract}
This paper studies $U(1)$-Chern-Simons theory and its relation to a construction of Chris Beasley and Edward Witten (\cite{bw}).  The natural geometric setup here is that of a three-manifold with a Seifert structure.  Based on a suggestion of Edward Witten we are led to study the stationary phase approximation of the path integral for $U(1)$-Chern-Simons theory after one of the three components of the gauge field is decoupled.  This gives an alternative formulation of the partition function for $U(1)$-Chern-Simons theory that is conjecturally equivalent to the usual $U(1)$-Chern-Simons theory (as in \cite{m}).  The goal of this paper is to establish this conjectural equivalence rigorously through appropriate regularization techniques.   This approach leads to some rather surprising results and opens the door to studying hypoelliptic operators and their associated eta-invariants in a new light.
\end{abstract}

\maketitle

\newtheorem{ex}[proposition]{Example}
\newcommand{\dee}{\text{d}}
\newcommand{\Z}{\mathbb{Z}}
\newcommand{\R}{\mathbb{R}}
\newcommand{\C}{\mathbb{C}}
\newcommand{\I}{\mathbb{I}}
\newcommand{\Q}{\mathbb{Q}}
\newcommand{\U}{\mathbb{U}}
\newcommand{\F}{\mathbb{F}}
\newcommand{\X}{\overline{X}}
\newcommand{\gau}{\mathcal{G}}
\newcommand{\h}{\mathbb{H}}



\maketitle

\newcommand{\ZZ}{{\bf Z}}
\noindent
\section{Introduction}
In \cite{bw} the authors study the Chern-Simons partition function
(see \cite{bw}, (3.1)),
\begin{equation}\label{orgchern}
Z(k)=\frac{1}{\text{Vol}(\mathcal{G})}\left(\frac{k}{4\pi^2}\right)^{\Delta{\mathcal{G}}}\int \mathcal{D}A\,\,\text{exp}\left[i\frac{k}{4\pi}\int_{X}\text{Tr}\left(A\wedge dA+\frac{2}{3}A\wedge A\wedge A\right)\right],
\end{equation}
where,
\begin{itemize}
\item  $A\in\mathcal{A}_{P}=\{ A\in (\Omega^{1}(P)\otimes\frak{g})^{G}\,\,|\,\,A(\xi^{\sharp})=\xi, \,\,\forall\,\xi\in\frak{g}\}$ is a connection on a principal $G$-bundle $\pi:P\rightarrow X$\footnote{In fact, \cite{bw} consider only $G$ compact, connected and simple, and for concreteness one may assume $G=SU(2)$.} over a closed three-manifold $X$,
\item  $\frak{g}=\text{Lie{G}}$ and $\xi^{\sharp}\in\Gamma(TX)$ is the vector field on $P$ generated by the infinitesimal action of $\xi$ on $P$,

\item  $k\in\Z$ (thought of as an element of $H^{4}(BG,\Z)$ that parameterizes the possible Chern-Simons invariants),

\item  $\mathcal{G}:=\{\psi\in(\text{Diff}(P,P))^{G}\,\,|\,\,\pi\circ\psi=\pi\}$ is the \emph{gauge group},

\item  $\Delta(\mathcal{G})$ is formally defined as the dimension of the gauge group.\footnote{Note that the definition of the Chern-Simons partition function in Eq. \ref{orgchern} is completely heuristic.  The measure $\mathcal{D}A$ has not been defined, but only assumed to ``exist heuristically,'' and the volume and dimension of the gauge group, $\text{Vol}(\mathcal{G})$ and $\Delta(\mathcal{G})$, respectively, are at best formally defined.}
\end{itemize}

In general, the partition function of Eq. \ref{orgchern} does not admit a general mathematical interpretation in terms of the cohomology of some classical moduli space of connections, in contrast to Yang-Mills theory for example (cf. \cite{w2}).  The main result of \cite{bw}, however, is that if $X$ is assumed to carry the additional geometric structure of a Seifert manifold, then the partition function of Eq. \ref{orgchern} \emph{does} admit a more conventional interpretation in terms of the cohomology of some classical moduli space of connections.
Using the additional Seifert structure on $X$, \cite{bw} decouple one of the components of a gauge field $A$, and introduce a new partition function (cf. \cite{bw} ; Eq. 3.7),
\begin{equation}\label{newchern}
\bar{Z}(k)=K\cdot\int \mathcal{D}A\mathcal{D}\Phi\,\,\text{exp}\left[i\frac{k}{4\pi}\left(CS(A)-\int_{X}2\kappa\wedge\text{Tr}(\Phi F_{A})+\int_{X}\kappa\wedge d\kappa\,\,\text{Tr}(\Phi^{2})\right)\right],
\end{equation}
where
\begin{itemize}
\item
$K:=\frac{1}{\text{Vol}(\mathcal{G})}\frac{1}{\text{Vol}(\mathcal{S})}\left(\frac{k}{4\pi^2}\right)^{\Delta{\mathcal{G}}}$,
\item  $\kappa\in\Omega^{1}(X,\R)$ is a contact form associated to the Seifert fibration of $X$ (cf. \cite{bw} ; \S 3.2),
\item  $\Phi\in\Omega^{0}(X,\frak{g})$ is a Lie algebra-valued zero form on $X$,
\item  $\mathcal{D}\Phi$ is a measure on the space of fields $\Phi$,\footnote{The measure $\mathcal{D}\Phi$ is defined independently of any metric on $X$ and is formally defined by the positive definite quadratic form
\begin{equation*}
(\Phi,\Phi):=-\int_{X}\kappa\wedge d\kappa\,\,\text{Tr}(\Phi^{2}),
\end{equation*}
which is invariant under the choice of representative for the contact structure $(X,H)$ on $X$, i.e. under the scaling $\kappa\mapsto f\kappa$, $\Phi\mapsto f^{-1}\Phi$, for some non-zero function $f\in\Omega^{0}(X,\R)$.}
\item  $\mathcal{S}$ is the space of local \emph{shift symmetries}\footnote{$\mathcal{S}$ may be identified with $\Omega^{0}(X,\frak{g})$, where the ``action'' on $\mathcal{A}_P$ is defined as $\delta_{\sigma}(A):=\sigma\kappa$, and on the space of fields $\Phi$ is defined as $\delta_{\sigma}(\Phi):=\sigma$, for $\sigma\in\Omega^{0}(X,\frak{g})$.  $\delta_{\sigma}$ denotes the action associated to $\sigma$.} that ``acts'' on the space of connections $\mathcal{A}_{P}$ and the space of fields $\Phi$ (cf. \cite{bw} ; \S 3.1),
\item $F_{A}\in\Omega^{2}(X,\frak{g})$ is the curvature of $A$, and
\item  $CS(A):=\int_{X}\text{Tr}\left(A\wedge dA+\frac{2}{3}A\wedge A\wedge A\right)$ is the Chern-Simons action.
\footnote{Note that the partition functions of Eq.'s \ref{orgchern} and \ref{newchern} are defined implicitly with respect the pullback of some trivializing section of the principal $G$-bundle $P$.  Of course, every principal $G$-bundle over a three-manifold for $G$ compact, connected and simple is trivializable.  It is basic fact that the partition functions of Eq.'s \ref{orgchern} and \ref{newchern} are independent of the choice of such trivializations.}
\end{itemize}
  \cite{bw} then give a heuristic argument showing that the partition function computed using the alternative description of Eq. \ref{newchern} should be the same as the Chern-Simons partition function of Eq. \ref{orgchern}.  In essence, they show
\begin{equation}\label{maint}
Z(k)=\bar{Z}(k),
\end{equation}
by gauge fixing $\Phi=0$ using the shift symmetry.  \cite{bw} then observe that the $\Phi$ dependence in the integral can be eliminated by simply performing the Gaussian integral over $\Phi$ in Eq. \ref{newchern} directly.  They obtain the alternative formulation:
\begin{equation}\label{newchern2}
Z(k)=\bar{Z}(k)=K'\cdot\int \mathcal{D}A\,\,\text{exp}\left[i\frac{k}{4\pi}\left(CS(A)-\int_{X}\frac{1}{\kappa\wedge d\kappa}\,\,\text{Tr}\left[(\kappa\wedge F_{A})^{2}\right]\right)\right],
\end{equation}
where $K':=\frac{1}{\text{Vol}(\mathcal{G})}\frac{1}{\text{Vol}(\mathcal{S})}\left(\frac{-ik}{4\pi^2}\right)^{\Delta{\mathcal{G}}/2}$.\footnote{Note that we have abused notation slightly by writing $\frac{1}{\kappa\wedge d\kappa}$.  We have done this with the understanding that since $\kappa\wedge d\kappa$ is non-vanishing (since $\kappa$ is a contact form), then $\kappa\wedge F_{A}=\phi\,\kappa\wedge d\kappa$ for some function $\phi\in \Omega^{0}(X,\frak{g})$, and we identify $\frac{\kappa\wedge F_{A}}{\kappa\wedge d\kappa}:=\phi$.}\\
\\
The objective in this article is to study the partition function for \emph{$U(1)$-Chern-Simons theory} using the analogue of Eq. \ref{newchern2} in this case.  Thus, we are also assuming here that $X$ is a Seifert manifold with a ``compatible'' contact structure, $(X,\kappa)$ (cf. \cite{bw} ; \S 3.2).  Note that any compact, oriented three-manifold possesses a contact structure and one aim of future work is to extend our results to \emph{all} closed three-manifolds using this fact.  For now, we restrict ourselves to the case of closed three-manifolds that possess contact compatible Seifert structures (see Definition \ref{geodef} for example).   We restrict to the gauge group $U(1)$ so that the action is quadratic and hence the stationary phase approximation
is exact.  A salient point is that the group $U(1)$ is not simple, and therefore may have non-trivial principal bundles associated with it.  This makes the $U(1)$-theory very different from the $SU(2)$-theory in that one must now incorporate a sum over bundle classes in a definition of the $U(1)$-partition function.  As an analogue of Eq. \ref{orgchern}, our basic definition of the partition function for $U(1)$-Chern-Simons theory is now
\begin{equation}\label{abelchern1}
Z_{U(1)}(X,k)=\sum_{p\in\text{Tors}H^{2}(X;\Z)}Z_{U(1)}(X,p,k)
\end{equation}
where
\begin{equation}\label{abelchern2}
Z_{U(1)}(X,p,k)=\frac{1}{Vol(\mathcal{G}_{P})}\int_{\mathcal{A}_{P}}\mathcal{D}A e^{\pi i k S_{X,P}(A)},
\end{equation}
recalling that the torsion subgroup
$\text{Tors}H^{2}(X;\Z)< H^{2}(X;\Z)$\footnote{Recall the definition of the
torsion of an abelian group is the collection of those elements which have
finite order.}
enumerates the $U(1)$-bundle classes that have flat connections.  Note that the bundle $P\rightarrow X$ in Eq. \ref{abelchern2} is taken to be any representative of a bundle class with first Chern class $c_{1}(P)=p\in \text{Tors}H^{2}(X;\Z)$.  Also note that some care must be taken to define the Chern-Simons action, $S_{X,P}(A)$, in the case that $G=U(1)$.  We outline this construction in Appendix \ref{appen1}.\\
\\
The main results of this article may be summarized as follows.  First, our main objective is the rigorous confirmation of the heuristic result of Eq. \ref{maint} in the case where the gauge group is $U(1)$.  This statement is certainly non-trivial and involves some fairly deep facts about the ``contact operator'' as studied by Michel Rumin (cf. \cite{r}).  Recall that this is the second order operator ``$D$'' that fits into the complex,
\begin{equation}\label{complex}
C^{\infty}(X)\xrightarrow{\text{$d_{H}$}}\Omega^{1}(H)\xrightarrow{\text{$D$}}\Omega^{2}(V)\xrightarrow{\text{$d_{H}$}}\Omega^{3}(X),
\end{equation}
and is defined by:
\begin{equation}
D\alpha=\kappa\wedge [\mathcal{L}_{\xi}+d_{H}\star_{H} d_{H}]\alpha,\,\,\alpha\in\Omega^{1}(H).
\end{equation}
This operator is elaborated upon in \S\ref{Dsec} below.  A somewhat surprising observation is that this operator shows up quite naturally in $U(1)$-Chern-Simons theory (see Prop. \ref{prop1} below), and this leads us to make several conjectures motivated by the rigorous confirmation of the heuristic result of Eq. \ref{maint}.  Our main result is the following:
\begin{prop}\label{mprop}
Let $(X,\phi,\xi,\kappa,g)$ be a closed, \emph{quasi-regular K-contact} three manifold.  If,
\begin{equation}\label{nz}
\bar{Z}_{U(1)}(X,p,k)=k^{n_X}e^{\pi i k S_{X,P}(A_{0})}e^{\frac{\pi i}{4}\left(\eta(-\star D)+\frac{1}{512}\int_{X}R^{2}\,\,\kappa\wedge d\kappa\right)}\int_{\mathcal{M}_{P}}\,\, (T^{d}_{C})^{1/2}
\end{equation}
where $R\in C^{\infty}(X)$ $=$ the Tanaka-Webster scalar curvature of $X$, and (\cite{m}),
\begin{equation}\label{oz}
Z_{U(1)}(X,p,k)=k^{m_X}e^{\pi i k S_{X,P}(A_{0})}e^{\pi i\left(\frac{\eta(-\star d)}{4}+\frac{1}{12}\frac{\text{CS}(A^{g})}{2\pi}\right)}\int_{\mathcal{M}_{P}}\,\, (T^{d}_{RS})^{1/2}
\end{equation}
then,
\begin{equation*}
Z_{U(1)}(X,k)=\bar{Z}_{U(1)}(X,k)
\end{equation*}
as topological invariants.
\end{prop}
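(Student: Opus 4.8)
The plan is to reduce the claimed equality of topological invariants to a comparison, bundle class by bundle class, of the two explicit formulas (\ref{nz}) and (\ref{oz}), and then to sum over $p\in\text{Tors}H^{2}(X;\Z)$ using (\ref{abelchern1}). Fix such a $p$, let $P\rightarrow X$ be a representative bundle with $c_{1}(P)=p$ and flat connection $A_{0}$. The factor $e^{\pi i k S_{X,P}(A_{0})}$ is literally the same on both sides, so it suffices to match three things: (a) the powers of $k$, namely $k^{n_{X}}$ versus $k^{m_{X}}$; (b) the two moduli-space integrals $\int_{\mathcal{M}_{P}}(T^{d}_{C})^{1/2}$ and $\int_{\mathcal{M}_{P}}(T^{d}_{RS})^{1/2}$; and (c) the two phase factors, that is, to show that
\[
\tfrac14\,\eta(-\star D)+\tfrac{1}{512}\int_{X}R^{2}\,\kappa\wedge d\kappa\ \equiv\ \tfrac14\,\eta(-\star d)+\tfrac{1}{12}\cdot\tfrac{\mathrm{CS}(A^{g})}{2\pi}\pmod{2\Z},
\]
after a compatible choice of $2$-framing of $TX$ on the two sides.

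For (a) I would count the net power of $k$ produced by each stationary-phase evaluation. On the $Z_{U(1)}$ side, $m_{X}$ is fixed (following \cite{m}) by $\dim H^{0}$ and $\dim H^{1}$ of $A_{0}$ in de Rham cohomology together with the Gaussian and ghost contributions; on the $\bar Z_{U(1)}$ side, $n_{X}$ is fixed by the analogous data computed with respect to the Rumin complex (\ref{complex}), plus the extra Gaussian integral over $\Phi$ weighted against $\mathrm{Vol}(\mathcal{S})$. The key input is that on a quasi-regular $K$-contact three-manifold the cohomology of (\ref{complex}) is isomorphic to de Rham cohomology, so the Betti numbers entering the two counts coincide; a short bookkeeping of the $\Phi$-integral against $\mathrm{Vol}(\mathcal{S})$ then yields $n_{X}=m_{X}$.

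For (b) the heart is to identify the one-loop contribution of the Rumin complex (\ref{complex}) with that of the de Rham complex. I would use the contact analytic torsion attached to (\ref{complex}) (cf.\ \cite{r}) and show that its restriction to $\mathcal{M}_{P}$, namely $T^{d}_{C}$, agrees fibrewise with the Ray--Singer torsion $T^{d}_{RS}$ up to a strictly positive metric-local factor. Concretely, decoupling the $\Phi$-field replaces the de Rham Laplacian on $1$-forms by the contact Laplacian built from $D$ acting on $\Omega^{1}(H)$; the quotient of the corresponding zeta-regularized determinants is local, hence computable from the symbol, its modulus is absorbed into the measure $(T^{d}_{C})^{1/2}$, and its phase is precisely the $\tfrac{1}{512}\int_{X}R^{2}\,\kappa\wedge d\kappa$ term appearing in (\ref{nz}).

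The main obstacle is (c): comparing the hypoelliptic eta-invariant $\eta(-\star D)$ with the classical $\eta(-\star d)$. Since $D$ is only hypoelliptic, $\eta(-\star D)$ must be defined through the non-standard heat regularization adapted to the complex (\ref{complex}) (cf.\ \cite{r}), and there is no a priori reason for its spectral asymmetry to be comparable with that of the elliptic signature operator $-\star d$. I would attack this by interpolating the adapted Riemannian metric and the Carnot--Carath\'eodory structure through the Rumin-type rescaling $g_{\epsilon}=g_{H}\oplus\epsilon^{-2}\,\kappa\otimes\kappa$ and tracking the variation of the eta-invariant along the family; that variation is a local integral of heat-coefficient type which, on a $K$-contact manifold, is expressible in Tanaka--Webster data, so that the accumulated endpoint contributions should produce exactly the difference of curvature counterterms $\tfrac{1}{512}\int_{X}R^{2}\,\kappa\wedge d\kappa-\tfrac{1}{12}\tfrac{\mathrm{CS}(A^{g})}{2\pi}$, modulo an integer coming from spectral flow (jumps of $\eta$ across the family). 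Showing that this spectral-flow correction is an \emph{even} integer --- so the two exponentials genuinely coincide, and not merely up to sign --- and that the framing ambiguity of $\mathrm{CS}(A^{g})$ is matched by the corresponding choice on the contact side, is the delicate point. Once (a)--(c) are established one has $Z_{U(1)}(X,p,k)=\bar Z_{U(1)}(X,p,k)$ for every $p$, and summing over $p\in\text{Tors}H^{2}(X;\Z)$ gives $Z_{U(1)}(X,k)=\bar Z_{U(1)}(X,k)$.
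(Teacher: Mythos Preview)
Your overall decomposition into (a), (b), (c) and the summation over $p$ is exactly how the paper organizes the argument, and your treatment of (a) is essentially the paper's: the isomorphism $H^{q}(X,d_{H})\simeq H^{q}(X,\R)$ (Rumin; quoted in the paper from \cite{rs}, Prop.~2.2) gives $n_{X}=m_{X}$ immediately. No extra bookkeeping with the $\Phi$-integral is needed, since $n_{X}$ is already defined after the $\Phi$-integration has been performed.

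Your (b), however, contains a genuine confusion. The torsions $T^{d}_{C}$ and $T^{d}_{RS}$ are real positive densities, so there is no ``phase'' to extract, and the $\tfrac{1}{512}\int_{X}R^{2}\,\kappa\wedge d\kappa$ appearing in (\ref{nz}) does \emph{not} arise from the torsion comparison. In the paper that curvature integral enters twice, in two unrelated roles: once as $\zeta_{0}(0)-\zeta_{1}(0)$ correcting the $k$-power (absorbed into the factor $C(k,J)$ in the definition of $\det'(-k\star_{H}D^{1})$), and once as the $\eta$-counterterm. The torsion equality itself is not obtained by any ``local from the symbol'' argument; the quotient of zeta-regularized determinants of an elliptic and a hypoelliptic Laplacian is not a local quantity in general. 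The paper simply invokes the theorem of Rumin--Seshadri (\cite{rs}, Theorem~4.2), which gives $T^{d}_{C}=T^{d}_{RS}$ on the nose.

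For (c) your adiabatic-family idea is in the right direction, but the paper's route avoids the spectral-flow parity problem you flag by using two exact identities rather than a variational argument. First, Biquard--Herzlich--Rumin (\cite{bhr}, Theorem~1.4) gives the \emph{exact} equality
\[
\eta_{0}(X,\kappa)=\eta(-\star_{H}D^{1})+\tfrac{1}{512}\int_{X}R^{2}\,\kappa\wedge d\kappa,
\]
and, since the Tanaka--Webster torsion vanishes on a K-contact manifold, $\eta_{0}(X,\kappa)=\lim_{\epsilon\to\infty}\eta(g_{\epsilon})$. Second, the paper computes $\mathrm{CS}(A^{g_{\epsilon}})$ by a Kaluza--Klein reduction over the orbifold base and shows $\lim_{\epsilon\to\infty}\mathrm{CS}(A^{g_{\epsilon}})=0$. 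Combining these with the metric-independence of $\tfrac{\eta(-\star d)}{4}+\tfrac{1}{12}\tfrac{\mathrm{CS}(A^{g})}{2\pi}$ (Atiyah--Patodi--Singer, with the canonical $2$-framing fixed) yields the equality of phases as an equality of real numbers, not merely modulo an integer; there is no residual spectral-flow ambiguity to control. Your proposed variational computation would, in effect, have to reprove the \cite{bhr} formula, which is substantially harder than quoting it.
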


Following \cite{m}, we rigorously define $\bar{Z}_{U(1)}(X,k)$ in \S\ref{partsec} using the fact that the stationary phase approximation for our path integral should be exact.  This necessitates the introduction of the regularized determinant of $D$ in Eq. \ref{regdet}, which in turn naturally involves the hypoelliptic Laplacian of Eq. \ref{maxLap}.  The rigorous quantity that we obtain for the integrand of Eq. \ref{intzeta} in \S\ref{partsec} is derived in Prop. \ref{rigdet}.  Using an observation from \S\ref{gsec} that identifies the volume of the isotropy subgroup of the gauge group $\mathcal{G}_{P}$, we identify the integrand of Eq. \ref{intzeta} with the contact analytic torsion $T^{d}_{C}$ defined in Def. \ref{torsdef}.  After formally identifying the signature of the contact operator $D$ with the $\eta$-invariant of $D$ in \S\ref{esec}, we obtain our fully rigorous definition of $\bar{Z}_{U(1)}(X,k)$ in Eq. \ref{newpar} below, which is repeated in Eq. \ref{nz} above.\\
\\
On the other hand, \cite{m} provides a rigorous definition of the partition function $Z_{U(1)}(X,k)$ that does not involve an \emph{a priori} choice of a contact structure on $X$.  The formula for this is recalled in Eq. \ref{oldpar} below, and is the term $Z_{U(1)}(X,p,k)$ in Eq. \ref{oz} of Prop. \ref{mprop} above.\\
\\
Our first main step in the proof of Prop. \ref{mprop} is confirmation of the fact that the Ray-Singer analytic torsion (cf. \cite{rsi}) of $X$, $T_{RS}^{d}$, is identically equal to the contact analytic torsion $T^{d}_{C}$.\footnote{We consider the square roots thereof, viewed as densities on the moduli space of flat connections $\mathcal{M}_{X}$.}  We observe that this result follows directly from (\cite{rs} ; Theorem 4.2).\\
\\
We also observe in Remark \ref{rmknX} that the quantities $m_{X}$ and $n_{X}$ that occur in Prop. \ref{mprop} are also equal.  This leaves us with the main final step in the confirmation of Prop. \ref{mprop}, which involves a study of the $\eta$-invariants, $\eta(-\star d)$, $\eta(-\star D)$, that naturally show up in $Z_{U(1)}(X,k)$, $\bar{Z}_{U(1)}(X,k)$, respectively.  This analysis is carried out in \S\ref{fsec}, where we observe that the work of Biquard, Herzlich, and Rumin (\cite{bhr}) is our most pertinent reference.  Our main observation here is that the quantum anomalies that occur in the computation of $Z_{U(1)}(X,k)$ and $\bar{Z}_{U(1)}(X,k)$ should, in an appropriate sense, be completely equivalent.  In our case, these quantum anomalies are made manifest precisely in the failure of the $\eta$-invariants to represent topological invariants.  As observed by Witten (cf. \cite{w3}), this is deeply connected with the fact that in order to \emph{actually} compute the partition function, one needs to make a choice that is tantamount to either a valid gauge choice for representatives of gauge classes of connections, or in some other way by breaking the symmetry of our problem.  Such a choice for us is equivalent to a choice of metric, which is encoded in the choice of a quasi-regular K-contact structure on our manifold $X$.  Witten observes in \cite{w3} that the quantum anomaly that is introduced by our choice of metric may be canceled precisely by adding an appropriate ``counterterm'' to the $\eta$-invariant, $\eta(-\star d)$.  This recovers topological invariance and effectively cancels the anomaly.\footnote{In this case, topological invariance is recovered only up to a choice of two-framing for $X$.  Of course, there is a canonical choice of such framing (\cite{at}), and we assume this choice throughout.}    This counterterm is found by appealing to the Atiyah-Patodi-Singer theorem, and is in fact identified as the gravitational Chern-Simons term
\begin{equation}
\text{CS}(A^{g}):=\frac{1}{4\pi}\int_{X}Tr(A^{g}\wedge dA^{g}+\frac{2}{3} A^{g}\wedge A^{g}\wedge A^{g}),
\end{equation}
where $A^{g}$ is the Levi-Civita connection on the spin bundle of $X$ for the metric,
\begin{equation}
g=\kappa\otimes\kappa+d\kappa(\cdot,J\cdot),
\end{equation}
on our quasi-regular K-contact three manifold, $(X,\phi,\xi,\kappa,g)$.  In particular, we use the fact that,
\begin{equation}\label{regg}
\frac{\eta(-\star d)}{4}+\frac{1}{12}\frac{\text{CS}(A^{g})}{2\pi},
\end{equation}
is a topological invariant of $X$, after choosing the canonical framing.
As is discussed in \S\ref{fsec}, this leads us to conjecture that there exists an appropriate counterterm for the $\eta$-invariant associated to the contact operator $D$ that yields the same topological invariant as in Eq. \ref{regg}.  More precisely, we conjecture that there exists a counterterm, $C_{T}$, such that
\begin{equation}
e^{\pi i\left[\frac{\eta(-\star d)}{4}+\frac{1}{12}\frac{\text{CS}(A^{g})}{2\pi}\right]}=e^{\frac{\pi i}{4}\left[\eta(-\star_{H} D^{1})+C_{T}\right]},
\end{equation}
as topological invariants.  We establish the following in Proposition \ref{lprop},
\begin{prop}
$(X,\phi,\xi,\kappa,g)$ closed, quasi-regular K-contact three-manifold.  The counterterm, $C_{T}$, such that $e^{\frac{\pi i}{4}\left[\eta(-\star_{H} D^{1})+C_{T}\right]}$ is a topological invariant that is identically equal to the topological invariant $e^{\pi i\left[\frac{\eta(-\star d)}{4}+\frac{1}{12}\frac{\text{CS}(A^{g})}{2\pi}\right]}$ is $$C_{T}=\frac{1}{512}\int_{X}R^{2}\,\,\kappa\wedge d\kappa,$$
where $R\in C^{\infty}(X)$ is the Tanaka-Webster scalar curvature of $X$.
\end{prop}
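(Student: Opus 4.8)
The plan is to prove the statement in two stages: first, that for the asserted $C_T$ the quantity $e^{\frac{\pi i}{4}\left[\eta(-\star_H D^1)+C_T\right]}$ is a deformation invariant of the quasi-regular K-contact structure on $X$; and second, that this invariant coincides with $e^{\pi i\left[\frac{\eta(-\star d)}{4}+\frac{1}{12}\frac{\text{CS}(A^g)}{2\pi}\right]}$, which is already known to be topological (given the canonical two-framing).

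\emph{Stage one: the anomaly of the contact $\eta$-invariant.} I would compute the variation of $\eta(-\star_H D^1)$ along a smooth one-parameter family $(\phi_t,\xi_t,\kappa_t,g_t)$ of quasi-regular K-contact structures. By the work of Biquard--Herzlich--Rumin (\cite{bhr}) the $\eta$-invariant of the Rumin operator has a purely local variational anomaly: $\tfrac{d}{dt}\eta(-\star_H D^1)$ is the integral over $X$ of a universal polynomial in the Tanaka--Webster curvature and pseudohermitian torsion and their covariant derivatives, of the appropriate homogeneity. In dimension three a K-contact structure is Sasakian, so the pseudohermitian torsion vanishes; and a weight count---under $\kappa\mapsto\lambda\kappa$ the volume density $\kappa\wedge d\kappa$ scales by $\lambda^{2}$ while the Tanaka--Webster scalar curvature $R$ scales by $\lambda^{-1}$, so $\int_X R^{2}\,\kappa\wedge d\kappa$ is scale invariant---leaves $\tfrac{d}{dt}\int_X R_t^{2}\,\kappa_t\wedge d\kappa_t$ as the only admissible term once pure-divergence contributions are discarded. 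Hence $\tfrac{d}{dt}\eta(-\star_H D^1)=-c\,\tfrac{d}{dt}\int_X R_t^{2}\,\kappa_t\wedge d\kappa_t$ for a universal constant $c$, and I would pin down $c=\tfrac{1}{512}$ by tracking the constants in \cite{bhr}, or else by evaluating both sides on a concrete model such as a family of Berger metrics on $S^{3}$. It then follows that $\eta(-\star_H D^1)+C_T$ is constant along the family, and since the quasi-regular K-contact structures compatible with the Seifert fibration of $X$ form a connected space, this yields a well-defined invariant.

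\emph{Stage two: identification with the de Rham side.} Here I would invoke the diabatic limit. The metric $g=\kappa\otimes\kappa+d\kappa(\cdot,J\cdot)$ belongs to the family $g_\varepsilon=\varepsilon^{-2}\,\kappa\otimes\kappa+d\kappa(\cdot,J\cdot)$ collapsing the Reeb direction as $\varepsilon\to0$. Because $\frac{\eta(-\star d)}{4}+\frac{1}{12}\frac{\text{CS}(A^g)}{2\pi}$ is a metric-independent topological invariant of $X$, it equals its limit as $\varepsilon\to0$. The key input from \cite{bhr} is that in this limit $\eta(-\star d;g_\varepsilon)$ converges, modulo the contribution of the finitely many ``small'' eigenvalues which one checks is itself topological, to $\eta(-\star_H D^1)$ plus an explicit local integral of Tanaka--Webster invariants; simultaneously the Levi-Civita connection $A^{g_\varepsilon}$ tends to a connection adapted to the contact structure and $\text{CS}(A^{g_\varepsilon})$ picks up a corresponding local correction which, by the weight count above, is again a multiple of $\int_X R^{2}\,\kappa\wedge d\kappa$. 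Assembling the three limits, these local corrections must combine to exactly $\tfrac14 C_T$, which simultaneously forces the value $\tfrac{1}{512}$ a second time and gives the claimed equality of topological invariants. I would cross-check the final constant on the lens spaces $L(p,q)$ with their standard Sasakian structures, where $\eta(-\star d)$ is a classical signature defect, $\text{CS}(A^g)$ is computable from the Seifert data, and $R$ is constant.

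The step I expect to be the main obstacle is the constant bookkeeping through the diabatic limit: verifying that the small-eigenvalue contribution to $\eta(-\star d;g_\varepsilon)$ is topological---so that it does not corrupt the identification---and carefully following the anomaly of the gravitational Chern--Simons term under the anisotropic rescaling $g\mapsto g_\varepsilon$. A secondary subtlety is making sure that invariance along the deformations used really produces a \emph{topological} invariant and not merely a Seifert-geometric one; for that I would appeal, as elsewhere in the paper, to the independence of the underlying partition functions from the auxiliary geometric choices.
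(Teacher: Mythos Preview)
Your strategy is in the right spirit but takes a harder route than the paper, and the difficulty you flag as ``the main obstacle'' is precisely what the paper bypasses with an ingredient you do not have.

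The paper does not carry out your Stage one. It simply cites \cite{bhr}, Theorem~1.4, which already asserts
\[
\eta_0(X,\kappa)=\eta(-\star_H D^1)+\frac{1}{512}\int_X R^2\,\kappa\wedge d\kappa
\]
with the constant in place, together with the fact (\cite{bhr}, Remark~9.6) that $\eta_0(X,\kappa)$ is a topological invariant of the Seifert manifold. No variational weight-count or model computation is needed; both the form of $C_T$ and the number $\tfrac{1}{512}$ are taken as input.

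For the identification with the de~Rham side, the paper uses the \emph{adiabatic} limit $\epsilon\to\infty$ of $g_\epsilon=\epsilon^{-1}\kappa\otimes\kappa+\pi^*h$ (note that your family $\varepsilon^{-2}\kappa\otimes\kappa+\ldots$ \emph{stretches}, not collapses, the Reeb direction as $\varepsilon\to0$). The key new input is Proposition~\ref{mc2}, proved in \cite{mcl2} by a Kaluza--Klein reduction: $\lim_{\epsilon\to\infty}\text{CS}(A^{g_\epsilon})=0$. Since $\frac{\eta(-\star d)}{4}+\frac{1}{12}\frac{\text{CS}(A^{g})}{2\pi}$ is metric-independent, it equals its $\epsilon\to\infty$ limit; there the gravitational Chern--Simons term vanishes outright and, because the Tanaka--Webster torsion of a K-contact structure is zero, $\eta(g_\epsilon)\to\eta_0(X,\kappa)$. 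The equality follows immediately, with no competing local corrections to reconcile and no small-eigenvalue analysis. Your proposed simultaneous bookkeeping of local corrections from both $\eta(-\star d;g_\varepsilon)$ and $\text{CS}(A^{g_\varepsilon})$ is exactly what this vanishing result renders unnecessary.
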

This proposition is proven in \S\ref{fsec} by appealing to the following result, which is established using a ``Kaluza-Klein'' dimensional reduction technique for the gravitational Chern-Simons term.  This result is modeled after the paper \cite{gijp}, and is listed as Proposition \ref{mc2}.
\begin{prop}(\cite{mcl2})
$(X,\phi,\xi,\kappa,g)$ closed, quasi-regular K-contact three-manifold,
\begin{displaymath}
\xymatrix{\xyC{2pc}\xyR{1pc}U(1) \ar@{^{(}->}[r] & X \ar[d]\\
                              & \Sigma}.
\end{displaymath}
Let $g_{\epsilon}:=\epsilon^{-1}\,\kappa\otimes\kappa+\pi^{*}h$.  After choosing a framing for $TX\oplus TX$, corresponding to a choice of vielbeins, then,
\begin{equation}
CS(A^{g_{\epsilon}})=\left(\frac{\epsilon^{-1}}{2}\right)\int_{\Sigma}r\,\omega+\left(\frac{\epsilon^{-2}}{2}\right)\int_{\Sigma}f^{2}\,\omega
\end{equation}
where $r\in C^{\infty}_{orb}(\Sigma)$ is the (orbifold) scalar curvature of $(\Sigma,h)$, $\omega\in\Omega^{2}_{orb}(\Sigma)$ is the (orbifold) Hodge form of $(\Sigma,h)$, and $f:=\star_{h}\omega$.  In particular, the adiabatic limit of $\text{CS}(A^{g_{\epsilon}})$ vanishes:
\begin{equation}
\lim_{\epsilon\rightarrow \infty}\text{CS}(A^{g_{\epsilon}})=0.
\end{equation}
\end{prop}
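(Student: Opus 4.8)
The plan is to run a Kaluza--Klein dimensional reduction of the three-dimensional gravitational Chern--Simons density along the Seifert fibration $\pi\colon X\to\Sigma$, modelled on the computation in \cite{gijp} but carried out in the orbifold category. Recall that on a quasi-regular K-contact manifold $\kappa$ is a connection one-form for the locally free circle action, so $d\kappa$ is horizontal and descends to a closed orbifold two-form on $\Sigma$; in the notation of the statement this descended form is $\omega$ (so $\pi^{*}\omega=d\kappa$) and $f=\star_{h}\omega\in C^{\infty}_{orb}(\Sigma)$. The point of the reduction is that each component of the Levi--Civita connection of $g_{\epsilon}$ acquires a definite power of $\epsilon$, so that after integrating out the fibre the Chern--Simons integral becomes a sum of base integrals organized by $\epsilon$-degree.

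First I would fix a local $h$-orthonormal coframe $\bar e^{1},\bar e^{2}$ on $\Sigma$ and introduce the adapted $g_{\epsilon}$-orthonormal coframe $e^{0}:=\epsilon^{-1/2}\kappa$, $e^{i}:=\pi^{*}\bar e^{i}$ ($i=1,2$) on $X$. Then $de^{0}=\epsilon^{-1/2}\,d\kappa=\epsilon^{-1/2}\,\pi^{*}\omega=\epsilon^{-1/2}f\,e^{1}\wedge e^{2}$, while $de^{1},de^{2}$ are pulled back from $\Sigma$ and are governed by the two-dimensional Levi--Civita one-form $\bar\varpi\equiv\bar\varpi^{1}{}_{2}$ of $(\Sigma,h)$, which satisfies $d\bar\varpi=\tfrac12 r\,\bar e^{1}\wedge\bar e^{2}$. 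Solving Cartan's first structure equations $de^{a}+\varpi^{a}{}_{b}\wedge e^{b}=0$ with $\varpi_{ab}=-\varpi_{ba}$ gives
\[
\varpi^{1}{}_{2}=\pi^{*}\bar\varpi-\tfrac12\epsilon^{-1/2}f\,e^{0},\qquad \varpi^{0}{}_{1}=\tfrac12\epsilon^{-1/2}f\,e^{2},\qquad \varpi^{0}{}_{2}=-\tfrac12\epsilon^{-1/2}f\,e^{1},
\]
so the $\mathfrak{so}(3)$-valued connection matrix $A^{g_{\epsilon}}=(\varpi^{a}{}_{b})$ splits as an order-$\epsilon^{0}$ pullback piece (the $\pi^{*}\bar\varpi$ in the $12$-slot) plus an order-$\epsilon^{-1/2}$ piece built from $f$ and the fibre direction $e^{0}$.

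Next I would substitute $A^{g_{\epsilon}}$ into $\text{CS}(A^{g_{\epsilon}})=\tfrac{1}{4\pi}\int_{X}\text{Tr}\bigl(A^{g_{\epsilon}}\wedge dA^{g_{\epsilon}}+\tfrac23 A^{g_{\epsilon}}\wedge A^{g_{\epsilon}}\wedge A^{g_{\epsilon}}\bigr)$ and expand in powers of $\epsilon^{-1/2}$. Since $\dim\Sigma=2$, every three-form pulled back from $\Sigma$ vanishes; this kills all contributions not carrying exactly one factor of $e^{0}$ --- in particular the entire order-$\epsilon^{0}$ ``pure base'' part and the $\varpi^{0}{}_{1}\wedge d\varpi^{0}{}_{1}$, $\varpi^{0}{}_{2}\wedge d\varpi^{0}{}_{2}$ parts of the trace --- and every surviving term has the form $(\text{two-form pulled back from }\Sigma)\wedge e^{0}$, so the $U(1)$-fibre integration (of $\epsilon$-independent fibre length) converts it into an orbifold integral over $\Sigma$ at the cost of one more power of $\epsilon^{-1/2}$. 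Sorting by $\epsilon$-degree and using $d\bar\varpi=\tfrac12 r\,\bar e^{1}\wedge\bar e^{2}$ together with Stokes' theorem on the closed orbifold $\Sigma$ to absorb the $\pi^{*}\bar\varpi\wedge df$-type terms, the surviving order-$\epsilon^{-1}$ contribution (from $\pi^{*}\bar\varpi\wedge df\wedge e^{0}$ and $f\,e^{0}\wedge\pi^{*}d\bar\varpi$ inside $\varpi^{1}{}_{2}\wedge d\varpi^{1}{}_{2}$) collapses to $\bigl(\tfrac{\epsilon^{-1}}{2}\bigr)\int_{\Sigma}r\,\omega$, and the surviving order-$\epsilon^{-2}$ contribution (from the $f\,e^{0}\wedge f^{2}e^{1}\wedge e^{2}$ term of $\varpi^{1}{}_{2}\wedge d\varpi^{1}{}_{2}$ together with the cubic term, whose only non-pullback-exact part is proportional to $f^{3}\,e^{0}\wedge e^{1}\wedge e^{2}$) collapses to $\bigl(\tfrac{\epsilon^{-2}}{2}\bigr)\int_{\Sigma}f^{2}\,\omega$. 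Because both surviving terms carry a strictly negative power of $\epsilon$, the adiabatic limit $\lim_{\epsilon\to\infty}\text{CS}(A^{g_{\epsilon}})=0$ is immediate.

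The hard part will be threefold. First and most serious: $\text{CS}$ is framing-dependent while the adapted coframe above exists only locally on $\Sigma$ (the orbifold tangent bundle $T\Sigma$ being in general nontrivial), so one must fix the global framing of $TX\oplus TX$ stipulated in the statement, show that the Chern--Simons integrand is still represented fibrewise by the expression computed in the adapted frame, and control the transition contributions so they do not alter the $\epsilon^{-1}$ and $\epsilon^{-2}$ coefficients. Second, $\pi\colon X\to\Sigma$ is a Seifert fibration with exceptional orbits, so ``integration over the fibre'' and the two-dimensional identity $d\bar\varpi=\tfrac12 r\,\omega$ have to be read in the orbifold sense; since the Chern--Simons integrand is local this amounts to rerunning the same computation in orbifold charts, with the exceptional fibres accounted for by the orbifold structure of the orbit space. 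Third is the bookkeeping: following the cubic term and all the wedge reorderings carefully enough to be certain that genuinely \emph{no} term of order $\epsilon^{0}$ or of positive order in $\epsilon$ survives the fibre integration --- which is precisely what yields the stated adiabatic limit --- and that the residual numerical constants assemble into the factors $\tfrac12$ displayed in the formula.
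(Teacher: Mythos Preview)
The paper does not actually supply a proof of this proposition; it is quoted from the companion paper \cite{mcl2} and is described in the text only by the phrase ``established using a `Kaluza--Klein' dimensional reduction technique for the gravitational Chern--Simons term \ldots modeled after the paper \cite{gijp}.'' Your proposal is precisely such a Kaluza--Klein reduction: you pick an adapted $g_{\epsilon}$-orthonormal coframe $(e^{0},e^{1},e^{2})=(\epsilon^{-1/2}\kappa,\pi^{*}\bar e^{1},\pi^{*}\bar e^{2})$, solve Cartan's first structure equations to obtain the connection matrix with its $\epsilon^{0}$ and $\epsilon^{-1/2}$ pieces, substitute into the Chern--Simons three-form, and integrate over the $U(1)$-fibre to reduce to base integrals over $\Sigma$. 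This matches exactly the method the paper attributes to \cite{mcl2}, so there is nothing to compare at the level of strategy.

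Your outline is sound and the connection one-forms you wrote down are correct. The three caveats you flag --- the global framing issue (the adapted coframe is only local on $\Sigma$), the orbifold bookkeeping for the Seifert base, and the verification that no $\epsilon^{\geq 0}$ terms survive together with the exact constants $\tfrac12$ --- are real and are presumably what \cite{mcl2} works out in detail; in particular the first point is why the statement explicitly says ``after choosing a framing for $TX\oplus TX$, corresponding to a choice of vielbeins,'' so the formula is understood relative to that choice rather than as a framing-independent identity.
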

Finally, as a consequence of these investigations, we are able to compute in Proposition \ref{cprop} the $U(1)$-Chern-Simons partition function fairly explicitly.
\begin{prop}
$(X,\phi,\xi,\kappa,g)$ closed, quasi-regular K-contact three-manifold.  Then,
\begin{eqnarray*}
\eta(-\star d)+\frac{1}{3}\frac{\text{CS}(A^{g})}{2\pi}&=&\eta(-\star D)+\frac{1}{512}\int_{X}R^{2}\,\,\kappa\wedge d\kappa\\
                                                       &=&1+\frac{d}{3}+4\sum_{j=1}^{N}s(\alpha_{j},\beta_{j}),
\end{eqnarray*}
where $d=c_1(X)=n+\sum_{j=1}^{N}\frac{\beta_{j}}{\alpha_j}\in\Q$ and
\begin{equation*}
s(\alpha,\beta):=\frac{1}{4\alpha}\sum_{k=1}^{\alpha-1}cot\left(\frac{\pi k}{\alpha}\right)cot\left(\frac{\pi k\beta}{\alpha}\right)\in\Q
\end{equation*}
is the classical Rademacher-Dedekind sum, where $[n; (\alpha_{1},\beta_{1}),\ldots,(\alpha_{N},\beta_{N})]$ (for gcd$(\alpha_{j},\beta_{j})=1$) are the Seifert invariants of $X$.  In particular, we have computed the $U(1)$-Chern-Simons partition function as:
\begin{eqnarray*}
Z_{U(1)}(X,p,k)&=&k^{n_X}e^{\pi i k S_{X,P}(A_{0})}e^{\frac{\pi i}{4}\left(1+\frac{d}{3}+4\sum_{j=1}^{N}s(\alpha_{j},\beta_{j})\right)}\int_{\mathcal{M}_{P}}\,\, (T^{d}_{C})^{1/2},\\
               &=&k^{m_X}e^{\pi i k S_{X,P}(A_{0})}e^{\frac{\pi i}{4}\left(1+\frac{d}{3}+4\sum_{j=1}^{N}s(\alpha_{j},\beta_{j})\right)}\int_{\mathcal{M}_{P}}\,\, (T^{d}_{RS})^{1/2}.
\end{eqnarray*}

\end{prop}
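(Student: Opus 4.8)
The plan is to combine the two explicit $\eta$-invariant computations that have already been isolated as the main analytic inputs, and then to match the remaining numerical factors. Recall that the proposition asserts a chain of two equalities: first that
\[
\eta(-\star d)+\tfrac{1}{3}\tfrac{\mathrm{CS}(A^{g})}{2\pi}=\eta(-\star D)+\tfrac{1}{512}\int_{X}R^{2}\,\kappa\wedge d\kappa,
\]
and second that this common value equals $1+\tfrac{d}{3}+4\sum_{j=1}^{N}s(\alpha_{j},\beta_{j})$. The first equality is almost immediate from Proposition~\ref{lprop}: that proposition identifies $C_T=\tfrac{1}{512}\int_X R^2\,\kappa\wedge d\kappa$ as precisely the counterterm making $e^{\frac{\pi i}{4}[\eta(-\star_H D^1)+C_T]}$ equal, as a topological invariant, to $e^{\pi i[\frac{\eta(-\star d)}{4}+\frac{1}{12}\frac{\mathrm{CS}(A^g)}{2\pi}]}$. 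Clearing the exponential and the factor of $4$ (working modulo the integer ambiguities, which I would address by noting both sides are defined through the same regularization and canonical two-framing) yields the first displayed equality; I would spell out that the $\tfrac{1}{12}\cdot 4=\tfrac13$ bookkeeping is what produces the coefficient $\tfrac13$ on the gravitational Chern-Simons term.

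For the second equality — the genuinely computational heart — I would evaluate $\eta(-\star d)+\tfrac13\tfrac{\mathrm{CS}(A^g)}{2\pi}$ directly on a quasi-regular K-contact three-manifold, which is a Seifert fibration $U(1)\hookrightarrow X\to\Sigma$ over an orbifold Riemann surface with Seifert invariants $[n;(\alpha_1,\beta_1),\dots,(\alpha_N,\beta_N)]$. The key steps are: (i) use the Kaluza–Klein reduction of Proposition~\ref{mc2} to express $\mathrm{CS}(A^{g_\epsilon})$ in terms of orbifold curvature data on $\Sigma$, and relate $A^g$ to the adiabatic family $A^{g_\epsilon}$ — here the $\epsilon$-dependence is controlled and the "anomalous" (metric-dependent) pieces are explicit; (ii) invoke the Atiyah–Patodi–Singer index theorem for the signature operator on the mapping torus / Seifert-fibred geometry, together with the known adiabatic-limit formulas for $\eta$-invariants of circle bundles (the reference \cite{bhr} of Biquard–Herzlich–Rumin, and the classical computations à la Atiyah–Patodi–Singer and Kirk–Klassen), to reduce $\eta(-\star d)$ to a sum of local contributions at the orbifold points plus a global term; (iii) recognize the orbifold-point contributions as cotangent sums and assemble them into the Rademacher–Dedekind sums $s(\alpha_j,\beta_j)$, while the global curvature integral over $\Sigma$ collapses — via Gauss–Bonnet for orbifolds — into the rational Euler number $d=c_1(X)=n+\sum_j\beta_j/\alpha_j$, producing the $\tfrac{d}{3}$ term; the leftover constant $1$ comes from the untwisted ($H^0$–$H^3$) part of the complex, i.e. the contribution of the trivial flat connection sector.

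The last assertion, the two closed formulas for $Z_{U(1)}(X,p,k)$, is then purely a matter of substitution: plug the computed value of $\eta(-\star D)+\tfrac{1}{512}\int_X R^2\,\kappa\wedge d\kappa$ into Eq.~\ref{nz}, use $n_X=m_X$ (Remark~\ref{rmknX}) and $T^d_C=T^d_{RS}$ (the consequence of \cite{rs}, Theorem~4.2, quoted in the introduction) to rewrite the prefactor and the torsion density, and observe that $e^{\pi i(\frac{\eta(-\star d)}{4}+\frac{1}{12}\frac{\mathrm{CS}(A^g)}{2\pi})}=e^{\frac{\pi i}{4}(1+\frac{d}{3}+4\sum s(\alpha_j,\beta_j))}$ by the equality just proved.

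I expect the main obstacle to be step (ii)–(iii) of the middle paragraph: carefully extracting the local orbifold contributions to the $\eta$-invariant of $-\star d$ in the adiabatic limit and proving that they reorganize exactly into Rademacher–Dedekind sums with no stray rational corrections. This requires being precise about (a) the sign conventions relating $-\star d$ on one-forms to the odd signature operator, (b) the distinction between the adiabatic ($\epsilon\to\infty$) value of $\mathrm{CS}(A^{g_\epsilon})$, which Proposition~\ref{mc2} shows vanishes, and the value at the actual K-contact metric $g=g_1$, so that the $\tfrac13\mathrm{CS}(A^g)/2\pi$ term must be tracked through the interpolation, and (c) the orbifold APS $\rho$-invariant contributions at the exceptional fibres, where the Seifert data $(\alpha_j,\beta_j)$ enter through lens-space–type boundary corrections. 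Matching all of these against the classical reciprocity properties of $s(\alpha,\beta)$ is where the bulk of the genuine work lies; everything else is formal bookkeeping given the results already established in the paper.
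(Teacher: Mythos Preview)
Your handling of the first equality via Proposition~\ref{lprop} matches the paper's argument. Note, though, that the paper's adiabatic-limit reasoning (metric independence of $\tfrac{\eta(-\star d)}{4}+\tfrac{1}{12}\tfrac{\mathrm{CS}(A^g)}{2\pi}$ along the family $g_\epsilon$, combined with $\lim_{\epsilon\to\infty}\eta(g_\epsilon)=\eta_0(X,\kappa)$ from Eq.~\ref{vantor} and $\lim_{\epsilon\to\infty}\mathrm{CS}(A^{g_\epsilon})=0$ from Proposition~\ref{mc2}) yields the equality at the level of the quantities themselves, not merely modulo integers; no ``clearing the exponential'' is needed.

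For the second equality you are working far too hard. The explicit formula
\[
\eta_0(X,\kappa)=1+\tfrac{d}{3}+4\sum_{j=1}^{N}s(\alpha_j,\beta_j)
\]
is quoted in the paper as Theorem~\ref{topthm} (from \cite{bhr}), and the identification $\eta_0(X,\kappa)=\eta(-\star_H D^1)+\tfrac{1}{512}\int_X R^2\,\kappa\wedge d\kappa$ is Theorem~\ref{eta0thm} (also from \cite{bhr}). Chaining these with the first equality gives the second equality immediately. Your steps (ii)--(iii) --- reducing the $\eta$-invariant to local orbifold contributions via APS, recognizing these as Dedekind sums, and assembling the $d/3$ term through orbifold Gauss--Bonnet --- amount to re-deriving Theorem~\ref{topthm} from scratch, whereas the paper simply cites it as a black box. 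Accordingly the paper presents Proposition~\ref{cprop} as ``an immediate consequence'' of Proposition~\ref{lprop} and Theorem~\ref{topthm}, with no further computation; the ``main obstacle'' you anticipate never arises.

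Your final paragraph on obtaining the partition-function formulas by substitution into Eq.~\ref{nz} and Eq.~\ref{oz}, using $n_X=m_X$ and $T^d_C=T^d_{RS}$, is correct and matches the paper.
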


\section{Preliminary Results}
Our starting point is the analogue of Eq. \ref{newchern2} for the $U(1)$-Chern-Simons partition function:
\begin{equation}\label{Anom1}
\bar{Z}_{U(1)}(X,p,k)=\frac{e^{\pi i k S_{X,P}(A_{0})}}{Vol(\mathcal{S})Vol(\mathcal{G}_{P})}\int_{\mathcal{A}_{P}}DA\,\, exp\,\left[\frac{i k}{4\pi}\left(\int_{X} A\wedge dA-\int_{X} \frac{(\kappa\wedge dA)^{2}}{\kappa\wedge d\kappa}\right)\right]
\end{equation}
where $S_{X,P}(A_{0})$ is the Chern-Simons invariant associated to $P$ for $A_{0}$ a flat connection on $P$.
The derivation of Eq. \ref{Anom1} can be found in Appendix \ref{appen1}.  It is obtained by expanding the $U(1)$ analogue of Eq. \ref{newchern2} around a critical point $A_{0}$ of the action.  Note that the critical points of this action, up to the action of the shift symmetry, are precisely the flat connections (\cite{bw} ; Eq. 5.3).  In our notation, $A\in T_{A_{0}}\mathcal{A}_{P}$.  Let us define the notation
\begin{equation}\label{newact}
S(A):=\int_{X} A\wedge dA-\int_{X} \frac{(\kappa\wedge dA)^{2}}{\kappa\wedge d\kappa}
\end{equation}
for the new action that appears in the partition function.  Also, define
\begin{equation}
\bar{S}(A):=\int_{X} \frac{(\kappa\wedge dA)^{2}}{\kappa\wedge d\kappa}
\end{equation}
so that we may write
\begin{equation}
S(A)=CS(A)-\bar{S}(A)
\end{equation}
The primary virtue of Eq. \ref{Anom1} above is that it is exactly equal to the original Chern-Simons partition function of Eq. \ref{abelchern2} and yet it is expressed in such a way that the action $S(A)$ is invariant under the shift symmetry.  This means that $S(A+\sigma\kappa)=S(A)$ for all tangent vectors $A\in T_{A_{0}}(\mathcal{A}_{P})\simeq\Omega^{1}(X)$ and $\sigma\in \Omega^{0}(X)$.  We may naturally view $A\in\Omega^1(H)$, the sub-bundle of $\Omega^{1}(X)$ restricted to the contact distribution $H\subset TX$.  Equivalently, if $\xi$ denotes the Reeb vector field of $\kappa$, then $\Omega^{1}(H)=\{\omega\in\Omega^{1}(X)\,\,|\,\,\iota_{\xi}\omega=0\}$.  The remaining contributions to the partition function come from the orbits of $\mathcal{S}$ in $\mathcal{A}_{P}$, which turn out to give a contributing factor of $Vol(\mathcal{S})$ (cf. \cite{bw} ; Eq. 3.32).  We thus reduce our integral to an integral over $\bar{\mathcal{A}}_{P}:=\mathcal{A}_{P}/\mathcal{S}$ and obtain:
\begin{eqnarray*}
Z_{U(1)}(X,p,k)&=&\frac{e^{\pi i k S_{X,P}(A_{0})}}{Vol(\mathcal{G}_{P})}\int_{\bar{\mathcal{A}}_{P}}\bar{D}A\,\, exp\,\left[\frac{i k}{4\pi}\left(\int_{X} A\wedge dA-\int_{X} \frac{(\kappa\wedge dA)^{2}}{\kappa\wedge d\kappa}\right)\right]\\
               &=&\frac{e^{\pi i k S_{X,P}(A_{0})}}{Vol(\mathcal{G}_{P})}\int_{\bar{\mathcal{A}}_{P}}\bar{D}A\,\, exp\,\left[\frac{i k}{4\pi}S(A)\right]
\end{eqnarray*}\\
where $\bar{D}A$ denotes an appropriate quotient measure on $\bar{\mathcal{A}}_{P}$, and we can now assume that $A\in\Omega^1(H)\simeq T_{A_{0}}\bar{\mathcal{A}}_{P}$.\\
\\
\noindent
\section{Contact structures}
At this point, we further restrict the structure on our $3$-manifold and assume that the Seifert structure is compatible with a contact metric structure $(\phi,\xi,\kappa,g)$ on $X$.  In particular, we restrict to the case of a quasi-regular K-contact manifold.  Let us review some standard facts about these structures in the case of dimension three.
\begin{rem}
Our three manifolds $X$ are assumed to be closed throughout this paper.
\end{rem}
\begin{define}\label{eq1}
A \emph{K-contact} manifold is a manifold $X$ with a contact metric structure $(\phi,\xi,\kappa,g)$ such that the Reeb field $\xi$ is Killing for the associated metric $g$, $\mathcal{L}_{\xi}g=0$.
\end{define}
\noindent
where,
\begin{itemize}
\item  $\kappa\in\Omega^{1}(X)$ contact form, $\xi=$ Reeb vector field.

\item  $H:=\text{ker}\kappa\subset TX$ denotes the horizontal or contact distribution on $(X,\kappa)$.

\item  $\phi\in \text{End}(TX)$, $\phi(Y)=JY$ for $Y\in \Gamma(H)$, $\phi(\xi)=0$ where $J\in \text{End}(H)$ complex structure on the contact distribution $H\subset TX$.

\item  $g=\kappa\otimes\kappa+d\kappa(\cdot, \phi\cdot)$

\end{itemize}
\noindent
\begin{rem}
Note that we will assume that our contact structure is ``co-oriented,'' meaning that the contact form $\kappa\in\Omega^{1}(X)$ is a global form.  Generally, one can take the contact structure to be to be defined only locally by the condition $H:=\text{ker}\,\kappa$, where $\kappa\in\Omega^{1}(U)$ for open subsets $U\in X$ contained in an open cover of $X$.
\end{rem}
\begin{define}\label{eq2}
The characteristic foliation $\mathcal{F}_{\xi}$ of a contact manifold $(X,\kappa)$ is said to be \emph{quasi-regular} if there is a positive integer $j$ such that each point has a foliated coordinate chart $(U,x)$ such that each leaf of $\mathcal{F}_{\xi}$ passes through $U$ at most $j$ times.  If $j=1$ then the foliation is said to be \emph{regular}.
\end{define}
\noindent
Definitions \ref{eq1} and \ref{eq2} together define a quasi-regular $K$-contact manifold, $(X,\phi,\xi,\kappa,g)$.  Such three-manifolds are necessarily ``Seifert'' manifolds that fiber over a two dimensional orbifold $\widehat{\Sigma}$ with with some additional structure.  Recall:
\begin{define}
A \emph{Seifert manifold} is a three manifold $X$ that admits a locally free $U(1)$-action.
\end{define}
\noindent
Thus, Seifert manifolds are simply $U(1)$-bundles over an orbifold $\widehat{\Sigma}$,
\begin{displaymath}
\xymatrix{\xyC{2pc}\xyR{1pc}U(1) \ar@{^{(}->}[r] & X \ar[d]\\
                              & \widehat{\Sigma}}.
\end{displaymath}
\noindent
We have the following classification result:  $X$ is a quasi-regular K-contact three manifold $\iff$
\begin{itemize}
\item  (\cite{bg}; Theorem 7.5.1, (i)) $X$ is a $U(1)$-Seifert manifold over a Hodge orbifold surface, $\widehat{\Sigma}$.

\item  (\cite{bg}; Theorem 7.5.1, (iii)) $X$ is a $U(1)$-Seifert manifold over a normal projective algebraic variety of real dimension two.
\end{itemize}
\begin{example}
All 3-dimensional Lens spaces, $L(p,q)$ and the Hopf fibration $S^{1}\hookrightarrow S^{3}\rightarrow \C\mathbb{P}^{1}$ possess quasi-regular K-contact structures.  Note that any trivial $U(1)$-bundle over a Riemann surface $\Sigma_{g}$, $X=U(1)\times \Sigma_{g}$, possesses \emph{no} K-contact structure (\cite{itoh}), however, and our results do not apply in this case.
\end{example}
\begin{rem}
Note that in fact our results apply to the class of all closed \emph{Sasakian} three-manifolds.  This follows from the observation that every Sasakian three manifold is K-contact (cf. \cite{b} ; Corollary 6.5), and every K-contact manifold possesses a quasi-regular K-contact structure (cf. \cite{bg} ; Theorem 7.1.10).
\end{rem}
\noindent
A useful observation for us is that for a quasi-regular K-contact three-manifold, the metric tensor $g$ must take the following form (cf. \cite{bg} ; Theorem 6.3.6):
\begin{equation}
g=\kappa\otimes\kappa+\pi^{*}h
\end{equation}
where $\pi:X\rightarrow \Sigma$ is our quotient map, and $h$ represents any (orbifold)K\"{a}hler metric on $\widehat{\Sigma}$ which is normalized so that the corresponding (orbifold)K\"{a}hler form, $\widehat{\omega}\in\Omega^{2}_{orb}(\Sigma,\R)$, pulls back to $d\kappa$.\\
\noindent
Note that the assumption that the Seifert structure on $X$ comes from a quasi-regular K-contact structure $(\phi,\xi,\kappa,g)$ is equivalent to assuming that $X$ is a $CR$-Seifert manifold (cf. \cite{bg} ; Prop. 6.4.8).  Recall the following
\begin{define}\label{geodef}
A \emph{CR-Seifert} manifold is a three-dimensional compact manifold endowed with both a strictly pseudoconvex CR structure $(H,J)$ and a Seifert structure, that are compatible in the sense that the circle action $\psi:U(1)\rightarrow \text{Diff}(X)$ preserves the CR structure and is generated by a Reeb field $\xi$.  In particular, given a choice of contact form $\kappa$, the Reeb field is Killing for the associated metric $g=\kappa\otimes\kappa+d\kappa(\cdot,J\cdot)$.
\end{define}
The assumption that $X$ is CR-Seifert (hence quasi-regular K-contact) is sufficient to ensure that the assumption in (\cite{bw} ; Eq. 3.27), which states that the $U(1)$-action on $X$, $\psi:U(1)\rightarrow \text{Diff}(X)$, acts by isometries, is satisfied.\\
\\
We now employ the natural Hodge star operator $\star$, induced by the metric $g$ on $X$, that acts on $\Omega^{\bullet}(X)$ taking $k$ forms to $3-k$ forms.  As a result of this normalization convention, we have $\star 1=\kappa\wedge d\kappa$ and $\star\kappa=d\kappa$.  Now let
\begin{equation}
\star_{H}=-\iota_{\xi}\circ\star
\end{equation}
as in equation (3.30) of \cite{bw}.  This operator then satisfies
\begin{eqnarray}
\star_{H}\kappa&=&0\\
\star_{H}(\kappa\wedge d\kappa)&=&0\\
\star_{H} 1&=&-d\kappa\\
(\star_{H})^{2}&=&-1
\end{eqnarray}
as is shown in (\cite{bw} ; pg. 20).  We also define a horizontal exterior derivative $d_H$ as the usual exterior derivative $d$ restricted to the space of horizontal forms $\Omega^{\bullet}(H)$.\\
\\
\noindent
Our key observation is that the action $S(A)$ may now be expressed in terms of these horizontal quantities.  Let us start with the term $\bar{S}(A)$.  Firstly, the term $\kappa\wedge dA$ in $\bar{S}(A)$ is equivalent to $\kappa\wedge d_{H}A$ since the vertical part of $dA$ is annihilated by $\kappa$ in the wedge product.  The term $\frac{\kappa\wedge dA}{\kappa\wedge d\kappa}$ is equivalent to $\star(\kappa\wedge d_{H}A)$ by the properties of $\star$ above.  By the definition of $\star_{H}$, $\star(\kappa\wedge d_{H}A)=\star_{H}d_{H}A$.  We then have,
\begin{eqnarray*}
\bar{S}(A)&=&\int_{X} \frac{(\kappa\wedge dA)^{2}}{\kappa\wedge d\kappa}\\
          &=&\int_{X} \star_{H}(d_{H}A)\wedge \kappa \wedge d_{H}A\\
          &=&\int_{X} \kappa \wedge [d_{H}A \wedge\star_{H}(d_{H}A)]\\
\end{eqnarray*}
We claim that $\bar{S}(A)$ is now expressed in terms of an inner product on $\Omega^{2}{H}$.  More generally, we define an inner product on $\Omega^{l}(H)$ for $0\leq l\leq 2$:
\begin{define}
Define the pairing $\langle\cdot,\cdot\rangle^{l}_{\kappa}:\Omega^{l}{H}\times\Omega^{l}{H}\rightarrow\R$ as
\begin{equation}\label{inner}
\langle\alpha,\beta\rangle^{l}_{\kappa}:=(-1)^{l}\int_{X} \kappa \wedge [\alpha\wedge\star_{H}\beta]
\end{equation}
for any $\alpha, \beta\in \Omega^{l}{H}$, $0\leq l\leq 2$.
\end{define}
\begin{prop}
The pairing $\langle\cdot,\cdot\rangle^{l}_{\kappa}$ is an inner product on $\Omega^{l}{H}$.
\end{prop}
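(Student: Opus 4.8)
The plan is to verify the three defining properties of an inner product. $\R$-bilinearity is immediate from the $\R$-linearity of $\star_{H}$, of the exterior product, and of integration over $X$, so the content lies in symmetry and positive-definiteness; I would obtain both by identifying $\langle\cdot,\cdot\rangle^{l}_{\kappa}$, up to an overall sign that does not depend on $l$, with the restriction to $\Omega^{l}(H)$ of the ordinary $L^{2}$ inner product
\[
(\alpha,\beta)_{L^{2}(X)}:=\int_{X}\alpha\wedge\star\beta=\int_{X}\langle\alpha,\beta\rangle_{g}\,\star 1
\]
on $\Omega^{l}(X)$ attached to $g$ and the orientation for which $\star 1=\kappa\wedge d\kappa$.

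The bridge between the two pairings is the identity $\star\alpha=-\,\kappa\wedge\star_{H}\alpha$, valid for every horizontal form $\alpha\in\Omega^{l}(H)$, $0\le l\le 2$. I would prove it from $\star_{H}=-\iota_{\xi}\circ\star$ together with Cartan's formula and $\iota_{\xi}\kappa=1$: these give
\[
\kappa\wedge\star_{H}\alpha=-\,\kappa\wedge\iota_{\xi}(\star\alpha)=\iota_{\xi}(\kappa\wedge\star\alpha)-\star\alpha ,
\]
while $\kappa\wedge\star\alpha=0$ because $\star$ carries a horizontal form into $\kappa\wedge\Omega^{\bullet}(H)$, so that a second factor of $\kappa$ annihilates it — a point one verifies degree-by-degree in a local coframe $(\kappa,e^{1},e^{2})$ adapted to the contact metric structure, with $e^{1},e^{2}$ orthonormal on $H$ and $d\kappa=e^{1}\wedge e^{2}$. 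Granting the bridge identity, and using $\kappa\wedge\alpha=(-1)^{l}\alpha\wedge\kappa$ to move $\kappa$ past $\alpha$, one computes
\[
\langle\alpha,\beta\rangle^{l}_{\kappa}=(-1)^{l}\int_{X}\kappa\wedge\alpha\wedge\star_{H}\beta=\int_{X}\alpha\wedge\bigl(\kappa\wedge\star_{H}\beta\bigr)=\pm\int_{X}\alpha\wedge\star\beta ,
\]
the sign being the same for all three degrees $l=0,1,2$. Thus $\langle\cdot,\cdot\rangle^{l}_{\kappa}$ is, up to that uniform sign, the classical Hodge $L^{2}$ inner product restricted to horizontal forms; it inherits symmetry from $(\cdot,\cdot)_{L^{2}(X)}$, and its definiteness follows from the pointwise identity $\alpha\wedge\star\alpha=\lvert\alpha\rvert_{g}^{2}\,\star 1$: since $\lvert\alpha\rvert_{g}^{2}$ is a continuous nonnegative function and $\star 1$ is a strictly positive density, $\int_{X}\lvert\alpha\rvert_{g}^{2}\,\star 1$ vanishes exactly when $\alpha\equiv 0$. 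Note that none of this uses the quasi-regular K-contact hypothesis beyond the existence of a co-oriented contact form $\kappa$ with a compatible metric $g$, and that co-orientedness is exactly what makes $H$, hence the whole argument, globally defined.

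I expect the only genuinely delicate point to be the sign bookkeeping — where I would recheck the conventions $\star_{H}=-\iota_{\xi}\circ\star$, $\star 1=\kappa\wedge d\kappa$, and $(\star_{H})^{2}=-1$ on $\Omega^{1}(H)$ — namely verifying that the prefactor $(-1)^{l}$ in \eqref{inner}, the reordering sign $(-1)^{l}$, and the sign in $\star\alpha=-\kappa\wedge\star_{H}\alpha$ combine to give the \emph{positive} $L^{2}$ pairing, and that they do so uniformly in $l$. The uniformity is the substantive content of the proposition: it reflects the fact that $\star_{H}$ squares to $-1$ on $\Omega^{1}(H)$ but to $+1$ on the composite $\Omega^{0}(H)\to\Omega^{2}(H)\to\Omega^{0}(H)$, so the three degrees behave differently and the factor $(-1)^{l}$ in the definition is inserted precisely to compensate. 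Once this is settled, the proposition is immediate from the standard properties of the Hodge inner product.
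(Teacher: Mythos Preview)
Your approach is exactly the paper's: both arguments identify $\langle\cdot,\cdot\rangle^{l}_{\kappa}$ with the restriction to $\Omega^{l}(H)$ of the ordinary Hodge $L^{2}$ pairing via a bridge identity relating $\star$ and $\kappa\wedge\star_{H}$ on horizontal forms. The paper simply asserts $\star\beta=\kappa\wedge\star_{H}\beta$ and concludes; you supply the Cartan-type derivation and are (rightly) more cautious about signs.

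On that point you are in fact more accurate than the paper. With the stated conventions $\star_{H}=-\iota_{\xi}\circ\star$ and $\star 1=\kappa\wedge d\kappa$, the correct identity is your $\star\beta=-\,\kappa\wedge\star_{H}\beta$, not the paper's $+$; e.g.\ $\star_{H}1=-d\kappa$ gives $\kappa\wedge\star_{H}1=-\kappa\wedge d\kappa=-\star 1$. Carrying the sign through your computation yields
\[
\langle\alpha,\beta\rangle^{l}_{\kappa}
=\int_{X}\alpha\wedge(\kappa\wedge\star_{H}\beta)
=-\int_{X}\alpha\wedge\star\beta
=-(\alpha,\beta)_{L^{2}}
\]
uniformly in $l=0,1,2$ (one sees it directly: $\langle 1,1\rangle^{0}_{\kappa}=\int_{X}\kappa\wedge\star_{H}1=-\int_{X}\kappa\wedge d\kappa<0$). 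So the ``recheck'' you flagged would reveal that the pairing, as written, is the \emph{negative} of the $L^{2}$ inner product --- symmetric and nondegenerate, but negative-definite. This is a harmless sign slip in the paper's definition (everything downstream only uses that the pairing is a constant multiple of the $L^{2}$ pairing, and the adjoint $d_{H}^{*}$ is unaffected), but strictly speaking the proposition needs the opposite overall sign in the definition of $\langle\cdot,\cdot\rangle^{l}_{\kappa}$ to be literally true.
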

\begin{proof}
It can be easily checked that this pairing is just the restriction of the usual $L^{2}$-inner product, $\langle \cdot, \cdot \rangle:\Omega^{l}{X}\times\Omega^{l}{X}\rightarrow\R$,
\begin{equation}
\langle\alpha,\beta\rangle:=\int_{X} \alpha\wedge\star\beta
\end{equation}
restricted to horizontal forms.  i.e. for any $\beta\in \Omega^{l}{H}$, $0\leq l\leq 2$, we have $\star\beta=\kappa\wedge\star_{H}\beta$.  We then have $\alpha\wedge\star\beta=(-1)^{l}\kappa \wedge [\alpha\wedge\star_{H}\beta]$ for any $\alpha, \beta\in \Omega^{l}{H}$, $0\leq l\leq 2$.  Thus, $\langle\cdot,\cdot\rangle^{l}_{\kappa}=\langle \cdot, \cdot \rangle$ on $\Omega^{l}{H}$ and therefore defines an inner product.
\end{proof}
\noindent
By our definition, we may now write $\bar{S}(A)=\langle d_{H}A, d_{H}A\rangle^{2}_{\kappa}$.  We make the following
\begin{define}
Define the formal adjoint of $d_{H}$, denoted $d_{H}^{*}$, via:
\begin{equation*}
\langle d_{H}^{*}\gamma,\phi\rangle^{l-1}_{\kappa}=\langle \gamma,d_{H}\phi\rangle^{l}_{\kappa}
\end{equation*}
for $\gamma\in\Omega^{l}(H)$, $\phi\in\Omega^{l-1}(H)$ where $l=1,2$ and $d_{H}^{*}\gamma=0$ for $\gamma\in\Omega^{0}(H)$.
\end{define}
\begin{prop}
$d_{H}^{*}=(-1)^{l}\star_{H}d_{H}\star_{H}:\Omega^{l}(H)\rightarrow\Omega^{l-1}(H)$, $0\leq l \leq 2$, where $\Omega^{-1}(H):=0$.
\end{prop}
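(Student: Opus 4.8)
The plan is to check the claimed formula against the defining property of $d_H^*$. Since $\langle\cdot,\cdot\rangle^{l}_\kappa$ is a nondegenerate (indeed positive definite) inner product by the Proposition above, it suffices to verify that $\langle(-1)^l\star_H d_H\star_H\gamma,\phi\rangle^{l-1}_\kappa=\langle\gamma,d_H\phi\rangle^l_\kappa$ for all $\gamma\in\Omega^l(H)$ and $\phi\in\Omega^{l-1}(H)$, with $l=1,2$, and then invoke nondegeneracy. The case $l=0$ needs no work: by definition $d_H^*=0$ on $\Omega^0(H)$, while $\star_H f=-f\,d\kappa\in\Omega^2(H)$ gives $d_H\star_H f\in\Omega^3(H)=0$ because $H$ has rank two, so $\star_H d_H\star_H f=0$, consistent with the convention $\Omega^{-1}(H):=0$.

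The engine of the proof is an integration-by-parts identity for $d_H$ under the $\kappa\wedge\,\cdot\,$ pairing: for $\mu\in\Omega^a(H)$ and $\nu\in\Omega^b(H)$ with $a+b=1$, one has $\int_X\kappa\wedge d_H\mu\wedge\nu=-(-1)^a\int_X\kappa\wedge\mu\wedge d_H\nu$. To prove this I would apply Stokes' theorem to $d(\kappa\wedge\mu\wedge\nu)$ on the closed three-manifold $X$, expand with the Leibniz rule, and use two facts: wedging with $\kappa$ annihilates the vertical component of $d\mu$ and of $d\nu$, so $\kappa\wedge d\mu=\kappa\wedge d_H\mu$ and likewise for $\nu$; and $d\kappa\wedge\mu\wedge\nu=0$, since $\mu\wedge\nu$ is a horizontal $1$-form and $d\kappa$ is horizontal, hence $d\kappa\wedge\mu\wedge\nu$ is a horizontal $3$-form on a rank-two distribution. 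A preliminary point worth recording is that $d_H$, being the horizontal projection of $d$, satisfies the Leibniz rule after wedging with $\kappa$, which is what makes the Stokes computation legitimate.

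With the identity in hand the computation runs as follows. Write $\langle\gamma,d_H\phi\rangle^l_\kappa=(-1)^l\int_X\kappa\wedge[\gamma\wedge\star_H d_H\phi]$ and replace $\kappa\wedge[\gamma\wedge\star_H d_H\phi]$ by $\kappa\wedge[d_H\phi\wedge\star_H\gamma]$ using the pointwise symmetry $\kappa\wedge[\alpha\wedge\star_H\beta]=\kappa\wedge[\beta\wedge\star_H\alpha]$ for $\alpha,\beta\in\Omega^l(H)$, which via $\star\beta=\kappa\wedge\star_H\beta$ is just symmetry of the $L^2$ pairing. Now apply the integration-by-parts identity with $\mu=\phi\in\Omega^{l-1}(H)$ and $\nu=\star_H\gamma\in\Omega^{2-l}(H)$ — the hypothesis holds since $(l-1)+(2-l)=1$ — to move $d_H$ onto $\star_H\gamma$. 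Finally use $\star_H^{-1}=-\star_H$ (from $\star_H^2=-1$) to write $d_H\star_H\gamma=-\star_H(\star_H d_H\star_H\gamma)$ and recognize the remaining integral as a multiple of $\langle\phi,\star_H d_H\star_H\gamma\rangle^{l-1}_\kappa$. Carrying the signs through yields $\langle\gamma,d_H\phi\rangle^l_\kappa=\langle(-1)^l\star_H d_H\star_H\gamma,\phi\rangle^{l-1}_\kappa$, and nondegeneracy of $\langle\cdot,\cdot\rangle^{l-1}_\kappa$ finishes the proof; one also checks in passing that $(-1)^l\star_H d_H\star_H$ genuinely lands in $\Omega^{l-1}(H)$ for $l=1,2$.

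I expect the only real difficulty to be bookkeeping: keeping the factors $(-1)^l$, $(-1)^{l-1}$, the sign coming from $\star_H^{-1}=-\star_H$, and the Koszul signs from the Leibniz rule mutually consistent, while checking at each stage that the horizontal degrees sum to two — so that every form integrated against $\kappa$ is a genuine top-degree horizontal form — and that the degree hypothesis $a+b=1$ of the integration-by-parts identity is met at the point where it is invoked.
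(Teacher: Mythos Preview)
Your argument is correct, but it follows a different route from the paper's. The paper gives essentially a one-line proof: since the earlier proposition identifies $\langle\cdot,\cdot\rangle^{l}_{\kappa}$ with the restriction of the ordinary $L^{2}$ inner product to horizontal forms, the adjoint $d_{H}^{*}$ is simply the ordinary codifferential $d^{*}=(-1)^{l}\star d\star$ restricted (and projected) to $\Omega^{\bullet}(H)$, and one reads off the formula by replacing $\star$ with $\star_{H}$ and $d$ with $d_{H}$. You instead build the adjoint relation from scratch inside the horizontal calculus: you prove a horizontal integration-by-parts identity $\int_{X}\kappa\wedge d_{H}\mu\wedge\nu=-(-1)^{a}\int_{X}\kappa\wedge\mu\wedge d_{H}\nu$ via Stokes' theorem, using that $\kappa\wedge d=\kappa\wedge d_{H}$ and that horizontal $3$-forms vanish on a rank-two distribution, and then chase the signs through $\star_{H}^{2}=-1$. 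The paper's approach is shorter but leaves implicit the verification that the horizontal projection of $(-1)^{l}\star d\star$ really is $(-1)^{l}\star_{H}d_{H}\star_{H}$ (which requires the orthogonality of the horizontal/vertical splitting); your approach is longer but entirely self-contained, makes every sign visible, and never needs the ambient Hodge formula at all.
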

\begin{proof}
This just follows from the definition of $d^{*}$ relative to the ordinary inner product $\langle \cdot,\cdot\rangle$, and the facts that $\langle \cdot, \cdot\rangle^{l-1}_{\kappa}$ is just this ordinary inner product restricted to horizontal forms and $d^{*}=(-1)^{l}\star d\star$.
\end{proof}
\noindent
Thus, we may now write $\bar{S}(A)=\langle A, d_{H}^{*}d_{H}A\rangle^{1}_{\kappa}$ and identify this piece of the action with the second order operator $d_{H}^{*}d_{H}$ on horizontal forms.\\
\\
\noindent
Now we turn our attention to the Chern-Simons part of the action $CS(A)=\int_{X} A\wedge dA$.  We would like to reformulate this in terms of horizontal quantities as well.  This is straightforward to do;  simply observe that $dA=\kappa\wedge\mathcal{L}_{\xi}A+d_{H}A$.  Thus, we have:
\begin{eqnarray}
CS(A)&=&\int_{X} A\wedge dA\\
     &=&\int_{X} A\wedge [\kappa\wedge\mathcal{L}_{\xi}A+d_{H}A]\\
     &=&\int_{X} A\wedge [\kappa\wedge\mathcal{L}_{\xi}A]+\int_{X} A\wedge d_{H}A\\
     &=&\int_{X} A\wedge [\kappa\wedge\mathcal{L}_{\xi}A]
\end{eqnarray}
where the last line follows from the fact that $A\wedge d_{H}A=0$ since both forms are horizontal.  Putting this all together,
we may now express the total action $S(A)$ in terms of horizontal quantities as follows:
\begin{eqnarray*}
S(A)&=&CS(A)-\bar{S}(A)\\
    &=&\int_{X} A\wedge [\kappa\wedge\mathcal{L}_{\xi}A]+\int_{X} A\wedge [\kappa\wedge d_{H}\star_{H} d_{H} A]\\
    &=&\int_{X} A\wedge [\kappa\wedge (\mathcal{L}_{\xi}+d_{H}\star_{H} d_{H})A]
\end{eqnarray*}
\section{The contact operator $D$}\label{Dsec}

A surprising observation is that $\kappa\wedge (\mathcal{L}_{\xi}+d_{H}\star_{H} d_{H})$ turns out to be well known.  It is the second order operator ``$D$'' that fits into the complex,
\begin{equation}\label{complex}
C^{\infty}(X)\xrightarrow{\text{$d_{H}$}}\Omega^{1}(H)\xrightarrow{\text{$D$}}\Omega^{2}(V)\xrightarrow{\text{$d_{H}$}}\Omega^{3}(X)
\end{equation}
where,
\begin{equation}
\Omega^{\bullet}(V):=\{\kappa\wedge\alpha\,\,|\,\,\alpha\in\Omega^{\bullet}(H)\}=\kappa\wedge\Omega^{\bullet}(H)
\end{equation}
and for $f\in C^{\infty}(X)$, $d_{H}f\in\Omega^{1}(H)$ stands for the restriction of $df$ to $H$ as usual, while
\begin{equation}
d_{H}:\Omega^{2}(V)\rightarrow \Omega^{3}(X)
\end{equation}
is just de Rham's differential restricted to $\Omega^{2}(V)$ in $\Omega^{2}(X)$.  $D$ is defined as follows:  since $d$ induces an isomorphism
\begin{equation}
d_{0}:\Omega^{1}(V)\rightarrow\Omega^{2}(H),\,\,\text{with}\,\,d_{0}(f\kappa)=fd\kappa|_{\Lambda^{2}(H)}
\end{equation}
then any $\alpha\in\Omega^{1}(H)$ admits a unique extension $\textit{l}(\alpha)$ in $\Omega^{1}(X)$ such that $d\textit{l}(\alpha)$ belongs to $\Omega^{2}(V)$;  i.e. given any initial extension $\bar{\alpha}$ of $\alpha$, one has
\begin{equation}
\textit{l}(\alpha)=\bar{\alpha}-d_{0}^{-1}(d\bar{\alpha})|_{\Lambda^{2}(H)}
\end{equation}
We then define
\begin{equation}
D\alpha:=d\textit{l}(\alpha)
\end{equation}
We then have (\cite{bhr} ; Eq. 39),
\begin{equation}\label{Ddef1}
D\alpha=\kappa\wedge [\mathcal{L}_{\xi}+d_{H}\star_{H} d_{H}]\alpha
\end{equation}
for any $\alpha\in\Omega^{1}(H)$.
Thus,
\begin{eqnarray}
S(A)&=&\int_{X} A\wedge [\kappa\wedge (\mathcal{L}_{\xi}+d_{H}\star_{H} d_{H})A]\\
    &=&\int_{X} A\wedge DA\\
    &=&\langle A, -\star DA\rangle
\end{eqnarray}
where $\langle \cdot, \cdot\rangle$ is the usual $L^{2}$ inner product on $\Omega^{1}(X)$.\\
\\
\noindent
Alternatively, we make the following
\begin{define}
Let $D^{1}:\Omega^{1}(H)\rightarrow\Omega^{1}(X)$ denote the operator
\begin{equation}\label{Ddef}
D^{1}:=\mathcal{L}_{\xi}+d_{H}\star_{H} d_{H}
\end{equation}
\end{define}
\noindent
and observe that we can also write $S(A)=\langle A,-\star_{H}D^{1}A\rangle_{\kappa}^{1}$, identifying $S(A)$ with the operator $-\star_{H}D^{1}$ on $\Omega^{1}(H)$.  Thus, we have proven the following
\begin{prop}\label{prop1}
The new action, $S(A)$, as defined in Eq. \ref{newact}, for the ``shifted'' partition function of Eq. \ref{Anom1} can be expressed as a quadratic form on the space of horizontal forms $\Omega^{1}(H)$ as follows:
\begin{equation}
S(A)= \langle A, -\star DA\rangle
\end{equation}
or equivalently as,
\begin{equation}
S(A)=\langle A,-\star_{H}D^{1}A\rangle_{\kappa}^{1}
\end{equation}
where $D$ and $D^{1}$ are the second order operators defined in Eq.'s \ref{Ddef1} and \ref{Ddef}, respectively.  $\langle \cdot, \cdot\rangle$ is the usual $L^{2}$ inner product on $\Omega^{1}(X)$, and $\langle \cdot, \cdot\rangle^{1}_{\kappa}$ is defined in Eq. \ref{inner}.
\end{prop}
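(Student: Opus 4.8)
The plan is to treat this proposition as a summary of the computation already carried out in the preceding pages, and to organize that computation into three clean steps, handling the two pieces of $S(A)=CS(A)-\bar S(A)$ separately before recognizing the result as Rumin's operator. First I would deal with the Chern--Simons term: decomposing the de Rham differential along the contact structure as $dA=\kappa\wedge\mathcal{L}_\xi A+d_H A$, and using that $A$ and $d_H A$ are both horizontal while $H$ has rank two — so $A\wedge d_H A\in\Omega^3(H)=0$ — only the vertical part survives, giving $CS(A)=\int_X A\wedge(\kappa\wedge\mathcal{L}_\xi A)$. Next I would handle $\bar S(A)$: using $\star 1=\kappa\wedge d\kappa$, $\star\kappa=d\kappa$ and $\star_H=-\iota_\xi\circ\star$ to identify $\tfrac{\kappa\wedge dA}{\kappa\wedge d\kappa}$ with $\star_H d_H A$, so that $\bar S(A)=\int_X\kappa\wedge[d_H A\wedge\star_H d_H A]$, and then integrating by parts (invoking $d_H^{*}=(-1)^l\star_H d_H\star_H$ and the vanishing of the relevant top-degree forms) to push one $d_H$ across and obtain $-\bar S(A)=\int_X A\wedge(\kappa\wedge d_H\star_H d_H A)$. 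Adding the two contributions yields $S(A)=\int_X A\wedge\bigl(\kappa\wedge(\mathcal{L}_\xi+d_H\star_H d_H)A\bigr)$.

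The second step is to recognize the bracketed operator. Here I would simply cite the construction reviewed in \S\ref{Dsec}, in particular the identity $D\alpha=\kappa\wedge[\mathcal{L}_\xi+d_H\star_H d_H]\alpha$ of Eq.~\ref{Ddef1} (from \cite{bhr}), to conclude $S(A)=\int_X A\wedge DA$ directly. Along the way one should note that $\mathcal{L}_\xi$ preserves $\Omega^1(H)$, since $\iota_\xi\mathcal{L}_\xi\alpha=\mathcal{L}_\xi\iota_\xi\alpha=0$ for horizontal $\alpha$ — this is exactly where the K-contact hypothesis $\mathcal{L}_\xi\kappa=0$ enters, so the operators $D$, $D^1$ are genuinely defined on the spaces claimed.

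The third step is bookkeeping to package $\int_X A\wedge DA$ as an inner product in the two stated forms. Since $DA\in\Omega^2(V)$, the one-form $-\star DA$ lies in $\Omega^1(X)$ and $\int_X A\wedge DA=\int_X A\wedge\star(-\star DA)=\langle A,-\star DA\rangle$ once $\star^2=1$ on the relevant degrees of the oriented Riemannian $3$-manifold is used; writing $D=\kappa\wedge D^1$ and applying $\star\beta=\kappa\wedge\star_H\beta$ for horizontal $\beta$ together with $(\star_H)^2=-1$ then rewrites the same quantity as $\langle A,-\star_H D^1 A\rangle^1_\kappa$ in the sense of Eq.~\ref{inner}. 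I expect the only delicate point to be sign and degree bookkeeping — pinning down the Hodge/orientation conventions so that both inner-product expressions carry the same sign as $\int_X A\wedge DA$, and checking that the integration by parts in the $\bar S(A)$ step produces precisely the sign needed for the $\kappa\wedge d_H\star_H d_H A$ term; there is no analytic content beyond this, and once the conventions of \S\ref{Dsec} are fixed the equalities are forced.
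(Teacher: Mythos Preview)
Your proposal is correct and mirrors the paper's argument step for step: the same decomposition $dA=\kappa\wedge\mathcal{L}_\xi A+d_H A$ for $CS(A)$, the same $\star_H$-identification followed by the adjoint (i.e.\ integration by parts) for $\bar S(A)$, and the same recognition of Rumin's operator $D$ with the inner-product repackaging. One small correction: $\mathcal{L}_\xi\kappa=0$ (and hence that $\mathcal{L}_\xi$ preserves $\Omega^1(H)$ via $[\mathcal{L}_\xi,\iota_\xi]=\iota_{[\xi,\xi]}=0$) holds for the Reeb field of \emph{any} contact form, so the K-contact hypothesis plays no role in this proposition---it enters only later, in the analysis of the $\eta$-invariants.
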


\section{Gauge group and the isotropy subgroup}\label{gsec}
In order to extract anything mathematically meaningful out of this construction we will need to divide out the action of the gauge group $\mathcal{G}_{P}$ on $\mathcal{A}_{P}$.  At this point we observe that the gauge group $\mathcal{G}_{P}\simeq \text{Maps}(X\rightarrow U(1))$ naturally descends to a ``horizontal'' action on $\bar{\mathcal{A}}_{P}$, which infinitesimally can be written as:
\begin{equation}\label{action}
\theta\in \text{Lie}(\mathcal{G}_{P}):A\mapsto A+d_{H}\theta
\end{equation}
Following \cite{s2}, we let $H_{A}$ denote the isotropy subgroup of $\mathcal{G}_{P}$ at a point $A\in\bar{\mathcal{A}}_{P}$.  Note that $H_{A}$ can be canonically identified for every $A\in\bar{\mathcal{A}}_{P}$, and so we simply write $H$ for the isotropy group.  The condition for an element of the gauge group $h(x)=e^{i\theta(x)}$ to be in the isotropy group is that $d_{H}\theta=0$, given definition \ref{action} above.  By (\cite{r} ; Prop. 12), we see that the condition $d_{H}\theta=0$ implies that $\theta$ is harmonic, and so $\mathcal{L}_{\xi}\theta=0$.  Therefore we have $d\theta=0$ since $d=d_{H}+\kappa\wedge \mathcal{L}_{\xi}$.  Thus, the group $H$ can be identified with the group of constant maps from $X$ into $U(1)$; hence, is isomorphic to $U(1)$.  We let $Vol(H)$ denote the volume of the isotropy subgroup, computed with respect to the metric induced from $\mathcal{G}_{P}$, so that
\begin{equation}\label{volu}
Vol(H)=\left[\int_{X}\kappa\wedge d\kappa\right]^{1/2}=\left[ n+\sum_{j=1}^{N}\frac{\beta_{j}}{\alpha_{j}}\right]^{1/2}
\end{equation}
where $[n; (\alpha_{1},\beta_{1}),\ldots,(\alpha_{N},\beta_{N})]$ are the Seifert invariants of our Seifert manifold $X$.  The last equality in Eq. \ref{volu} above follows from Eq. 3.22 of \cite{bw}.

\section{The partition function}\label{partsec}
We now have
\begin{eqnarray}
Z_{U(1)}(X,p,k)&=&\frac{e^{\pi i k S_{X,P}(A_{0})}}{Vol(\mathcal{G}_{P})}\int_{\bar{\mathcal{A}}_{P}}\bar{D}A\,\, e^{\left[\frac{i k}{4\pi}S(A)\right]}\nonumber\\
               &=&\frac{Vol(\mathcal{G}_{P})}{Vol(H)}\frac{e^{\pi i k S_{X,P}(A_{0})}}{Vol(\mathcal{G}_{P})}\int_{\bar{\mathcal{A}}_{P}/\mathcal{G}_{P}}\,\, e^{\left[\frac{i k}{4\pi}S(A)\right]}\left[det'(d_{H}^{*}d_{H})\right]^{1/2}\,\,\mu\nonumber\\
               &=&\frac{e^{\pi i k S_{X,P}(A_{0})}}{Vol(H)}\int_{\bar{\mathcal{A}}_{P}/\mathcal{G}_{P}}\,\, e^{\left[\frac{i k}{4\pi}S(A)\right]}\left[det'(d_{H}^{*}d_{H})\right]^{1/2}\,\,\mu\label{oscil}
\end{eqnarray}
where $\mu$ is the induced measure on the quotient space $\bar{\mathcal{A}}_{P}/\mathcal{G}_{P}$ and $det'$ denotes a regularized determinant to be defined later.  Since $S(A)=\langle A,-\star_{H}D^{1}A\rangle_{\kappa}^{1}$ is quadratic in $A$, we may apply the method of stationary phase (\cite{s1}, \cite{gs}) to evaluate the oscillatory integral (\ref{oscil}) exactly.  We obtain,
\begin{eqnarray}\label{intzeta}
&&\\
Z_{U(1)}(X,p,k)&=&\frac{e^{\pi i k S_{X,P}(A_{0})}}{Vol(H)}\int_{\mathcal{M}_{P}}\,\, e^{\frac{\pi i}{4}\,\,sgn(-\star_{H}D^{1})}\frac{\left[det'(d_{H}^{*}d_{H})\right]^{1/2}}{\left[det'(-k\star_{H}D^{1})\right]^{1/2}}\,\,\nu\nonumber
\end{eqnarray}
where $\mathcal{M}_{P}$ denotes the moduli space of flat connections modulo the gauge group and $\nu$ denotes the induced measure on this space.  Note that we have included a factor of $k$ in our regularized determinant since this factor occurs in the exponent multiplying $S(A)$.

\section{Zeta function determinants}
We will use the following to define the regularized determinant of $-k\star_{H}D^{1}$
\begin{prop}\label{product}\cite{s2}
Let $\mathcal{H}_{0}$, $\mathcal{H}_{1}$ be Hilbert spaces, and $S:\mathcal{H}_{1}\rightarrow \mathcal{H}_{1}$ and $T:\mathcal{H}_{0}\rightarrow \mathcal{H}_{1}$ such that $S^{2}$ and $TT^{*}$ have well defined zeta functions with discrete spectra and meromorphic extensions to $\C$ that are regular at 0 (with at most simple poles on some discrete subset).  If $ST=0$, and $S^{2}$ is self-adjoint, then
\begin{equation}
det'(S^{2}+TT^{*})=det'(S^{2})det'(TT^{*})
\end{equation}
\end{prop}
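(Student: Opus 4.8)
The plan is to reduce the identity to the additivity of spectral zeta functions under an orthogonal direct sum. The key point is that the hypothesis $ST=0$, together with self-adjointness of $S^2$, forces the nonzero parts of the spectra of $S^2$ and of $TT^*$ to be supported on mutually orthogonal subspaces of $\mathcal{H}_1$, so that $S^2+TT^*$ becomes block-diagonal with respect to a fixed orthogonal splitting of $\mathcal{H}_1$.

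First I would record the relevant algebra. From $ST=0$ we get $S^2T=S(ST)=0$, and, taking adjoints and using $(S^2)^*=S^2$, also $T^*S^2=(S^2T)^*=0$; since $\ker(TT^*)=\ker(T^*)$ this says $\mathrm{ran}(S^2)\subseteq\ker(TT^*)$. Dually, $\mathrm{ran}(TT^*)\subseteq\mathrm{ran}(T)\subseteq\ker(S)\subseteq\ker(S^2)$. Because $S^2$ and $TT^*$ are self-adjoint with discrete spectrum, $\ker(S^2)=\mathrm{ran}(S^2)^{\perp}$ and $\ker(TT^*)=\mathrm{ran}(TT^*)^{\perp}$, so $\overline{\mathrm{ran}(S^2)}$ and $\overline{\mathrm{ran}(TT^*)}$ are orthogonal and one obtains the orthogonal Hilbert-space decomposition
\[
\mathcal{H}_1=\overline{\mathrm{ran}(S^2)}\ \oplus\ \overline{\mathrm{ran}(TT^*)}\ \oplus\ \bigl(\ker S^2\cap\ker TT^*\bigr).
\]
With respect to this splitting $TT^*$ vanishes on the first summand, $S^2$ vanishes on the second, and both vanish on the third; hence $S^2+TT^*$ is block-diagonal, equal to $S^2$ on the first block, to $TT^*$ on the second, and to $0$ on the third. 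In particular $S^2+TT^*$, on the natural domain, has discrete spectrum, and its nonzero spectrum, counted with multiplicities, is the disjoint union of the nonzero spectra of $S^2$ and of $TT^*$.

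It then follows that, as meromorphic functions of $s$,
\[
\zeta_{S^2+TT^*}(s)=\zeta_{S^2}(s)+\zeta_{TT^*}(s),
\]
where each $\zeta$ is the spectral zeta function built from the nonzero eigenvalues only. The right-hand side is meromorphic on $\C$ and regular at $s=0$ by hypothesis, so the same holds on the left, $det'(S^2+TT^*)$ is defined, and differentiating at $s=0$ with $\log det'(\cdot)=-\zeta'_{(\cdot)}(0)$ yields $\log det'(S^2+TT^*)=\log det'(S^2)+\log det'(TT^*)$, which is the assertion.

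The genuine content, and the only place I expect to need care, is the orthogonality/block-diagonal step: one must be mildly attentive to the domains of the (unbounded, second-order) operators and to the fact that $S^2$ need not be positive, so that the branch of $\lambda\mapsto\lambda^{-s}$ used to define $\zeta_{S^2+TT^*}$ must restrict to the branches used for $\zeta_{S^2}$ and $\zeta_{TT^*}$ on the respective summands — which is automatic, since the nonzero eigenvalues of $S^2+TT^*$ are literally those of $S^2$ and of $TT^*$. All the analytic input (meromorphic continuation, regularity at $0$, simple poles) is either assumed or inherited summand-by-summand, so no new estimates are required.
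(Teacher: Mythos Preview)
Your proof is correct and follows essentially the same approach as the paper's. The paper's argument is a one-line observation that $S^{2}TT^{*}=0$ and $TT^{*}S^{2}=0$ (both following from $ST=0$ together with self-adjointness of $S^{2}$ and $TT^{*}$), from which the determinant factorization is declared to follow; your orthogonal decomposition and zeta-function additivity simply make explicit the mechanism behind that declaration.
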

\begin{proof}
This equality follows from the facts that $S^{2}TT^{*}=0$ and $TT^{*}S^{2}=0$ (i.e. these operators commute), which both follow from $ST=0$ and the fact that $S^{2}$ and $TT^{*}$ are both self-adjoint.
\end{proof}
\noindent
Following the notation of Eq.'s (3)-(6) in section 2 of \cite{s2}, we set the operators $S=-k\star_{H}D^{1}$ and $T=k d_{H}d_{H}^{*}$ on $\Omega^{1}(H)$ and observe that $ST=0$ since (\ref{complex}) is a complex.
With Prop. \ref{product} as \emph{motivation}, we make the formal definition
%
%
%
%
%
\begin{equation}\label{regdet}
det'(-k\star_{H}D^{1}):=C(k,J)\cdot\frac{[det'(S^{2}+TT^{*})]^{1/2}}{[det'(TT^{*})]^{1/2}}
\end{equation}
where $S^{2}+TT^{*}=k^{2}((D^{1})^{*}D^{1}+(d_{H}d_{H}^{*})^{2})$, $TT^{*}=k^{2}(d_{H}d_{H}^{*})^{2}$ and
\begin{equation}
C(k,J):=k^{\left(-\frac{1}{1024}\int_{X}R^{2}\,\kappa\wedge d\kappa\right)}
\end{equation}
is a function of $R\in C^{\infty}(X)$, the Tanaka-Webster scalar curvature of $X$, which in turn depends only on a choice of a compatible complex structure $J\in \text{End}(H)$.  That is, given a choice of contact form $\kappa\in\Omega^{1}(X)$, the choice of complex structure $J\in \text{End}(H)$ determines uniquely an associated metric.  We have defined $det'(-k\star_{H}D^{1})$ in this way to eliminate the metric dependence that would otherwise occur in the $k$-dependence of this determinant.  The motivation for the definition of the factor $C(k,J)$ comes explicitly from Prop. \ref{Jdepend} below.\\
\\
\noindent
The operator
\begin{equation}\label{maxLap}
\Delta:=(D^{1})^{*}D^{1}+(d_{H}d_{H}^{*})^{2}
\end{equation}
is actually equal to the middle degree Laplacian defined in Eq. (10) of \cite{rs} and has some nice analytic properties.  In particular, it is maximally hypoelliptic and invertible in the Heisenberg symbolic calculus (See \cite{rs} ; \S 3.1).  We define the regularized determinant of $\Delta$ via its zeta function (\cite{rs} ; Pg. 10)
\begin{equation}
\zeta(\Delta)(s):=\sum_{\lambda\in\text{spec}^{*}(\Delta)}\lambda^{-s}
\end{equation}
Note that our definition agrees with \cite{rs} up to a constant term $\text{dim}H^{1}(X,D)$, which is finite by hypoellipticity (\cite{rs} ; Pg. 11).  Also, $\zeta(\Delta)(s)$ admits a meromorphic extension to $\C$ that is regular at $s=0$ (\cite{p2} ; \S 4).  Thus, we define the regularized determinant of $\Delta$ as
\begin{equation}
det'(\Delta):=e^{-\zeta'(\Delta)(0)}
\end{equation}
Let $\Delta_{0}:=(d_{H}^{*}d_{H})^{2}$ on $\Omega^{0}(X)$, $\Delta_{1}:=\Delta$ on $\Omega^{1}(H)$ and define $\zeta_{i}(s):=\zeta(\Delta_{i})(s)$.  We claim the following\\
\begin{prop}
For any real number $0<c\in\R$,
\begin{equation}\label{delta}
det'(c\Delta_{i}):=c^{\zeta_{i}(0)}det'(\Delta_{i})
\end{equation}
for $i=0,1$.
\end{prop}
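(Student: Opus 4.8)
The plan is to reduce everything to the defining zeta functions and exploit the elementary scaling behaviour of $\lambda\mapsto(c\lambda)^{-s}$. First I would record that, since $c>0$, multiplication by $c$ carries $\mathrm{spec}^{*}(\Delta_i)$ bijectively onto $\mathrm{spec}^{*}(c\Delta_i)$, preserving finiteness of multiplicities and (because the $\Delta_i$ are non-negative) the positivity of the eigenvalues that makes $\lambda^{-s}$ unambiguous for the principal branch of the logarithm. Hence on the half-plane where the series converges absolutely one has the termwise identity
\begin{equation*}
\zeta(c\Delta_i)(s)=\sum_{\lambda\in\mathrm{spec}^{*}(\Delta_i)}(c\lambda)^{-s}=c^{-s}\,\zeta_i(s).
\end{equation*}

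Next I would invoke the meromorphic continuation of $\zeta_i(s)$ to $\mathbb{C}$ with regularity at $s=0$, which is available from \cite{rs} and \cite{p2} as recalled above. Since $s\mapsto c^{-s}$ is entire, the product $c^{-s}\zeta_i(s)$ furnishes the meromorphic continuation of $\zeta(c\Delta_i)(s)$, and by the identity theorem the displayed equality persists on all of $\mathbb{C}$; in particular $\zeta(c\Delta_i)(s)$ is itself regular at $s=0$ with $\zeta(c\Delta_i)(0)=\zeta_i(0)$, so $\det'(c\Delta_i)=e^{-\zeta(c\Delta_i)'(0)}$ is well defined.

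Differentiating $c^{-s}\zeta_i(s)$ and evaluating at $s=0$ then gives
\begin{equation*}
\zeta(c\Delta_i)'(0)=-(\log c)\,\zeta_i(0)+\zeta_i'(0),
\end{equation*}
where I use precisely the regularity of $\zeta_i$ at $0$ so that $\zeta_i(0)$ and $\zeta_i'(0)$ are finite. Exponentiating through the definition $\det'(\cdot)=e^{-\zeta'(\cdot)(0)}$ yields
\begin{equation*}
\det'(c\Delta_i)=e^{-\zeta(c\Delta_i)'(0)}=e^{(\log c)\zeta_i(0)}\,e^{-\zeta_i'(0)}=c^{\zeta_i(0)}\det'(\Delta_i),
\end{equation*}
for $i=0,1$, as claimed.

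There is no genuine analytic obstacle here: every ingredient is either elementary or already established in the excerpt (the meromorphic continuation and regularity at $s=0$ of the relevant hypoelliptic zeta functions). The only point demanding care is the one flagged above, namely legitimising the passage from the convergent-series identity to an identity of meromorphic continuations \emph{before} differentiating at the origin, which is exactly why the regularity of $\zeta_i$ at $s=0$ must be invoked. I would also note in passing that the same argument applies verbatim to any of the other zeta-regularized determinants appearing in \S\ref{partsec}, so this lemma is the scaling compatibility needed to make the factor of $k$ inside $\det'(-k\star_H D^1)$ in Eq.~\ref{regdet} meaningful.
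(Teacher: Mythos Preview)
Your proof is correct and follows exactly the same approach as the paper: derive the scaling identity $\zeta(c\Delta_i)(s)=c^{-s}\zeta_i(s)$, then differentiate at $s=0$ and exponentiate through the definition $\det'=e^{-\zeta'(0)}$. The paper's own proof is a two-line sketch of precisely this computation; you have simply supplied the routine justifications (bijection of spectra, meromorphic continuation, regularity at $0$) that the paper leaves implicit.
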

\begin{proof}
To prove this claim, recall that $\zeta_{i}(s)=\zeta(\Delta_{i})(s)$ for $i=0,1$, scale as follows:
\begin{equation}\label{unsure}
\zeta(c\Delta_{i})(s)=c^{-s}\zeta(\Delta_{i})(s).
\end{equation}
From here we simply calculate the scaling of the regularized determinants using the definition
\begin{equation}
det'(\Delta_{i}):=e^{-\zeta'(\Delta_{i})(0)}
\end{equation}
and the claim is proven.
\end{proof}
\noindent
The following will be useful.
\begin{prop}\label{Jdepend}
For $\Delta_{0}:=(d_{H}^{*}d_{H})^{2}$ on $\Omega^{0}(X)$, $\Delta_{1}:=\Delta$ on $\Omega^{1}(H)$ defined as above and $\zeta_{i}(s):=\zeta(\Delta_{i})(s)$, we have
\begin{eqnarray}
&&\\\label{dimen}
\zeta_{0}(0)-\zeta_{1}(0)&=&\left(-\frac{1}{512}\int_{X}R^{2}\,\kappa\wedge d\kappa\right)+\text{dim Ker}\Delta_{1}-\text{dim Ker}\Delta_{0}\nonumber\\
                         &=&\left(-\frac{1}{512}\int_{X}R^{2}\,\kappa\wedge d\kappa\right)+\text{dim} H^{1}(X,d_{H})-\text{dim} H^{0}(X,d_{H}).
\end{eqnarray}
where $R\in C^{\infty}(X)$ is the Tanaka-Webster scalar curvature of $X$ and $\kappa\in\Omega^{1}(X)$ is our chosen contact form as usual.
\end{prop}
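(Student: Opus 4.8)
\emph{Overall plan.} The statement is equivalent to a comparison of the constant terms in the small-time heat-trace asymptotics of the two hypoelliptic operators. So the plan is: (i) rewrite each $\zeta_i(0)$ in terms of the $t^0$ coefficient of $\text{Tr}(e^{-t\Delta_i})$; (ii) use invariance theory for the Heisenberg calculus to reduce the difference of these coefficients to a universal multiple of $\int_X R^2\,\kappa\wedge d\kappa$; (iii) pin down that universal constant by an explicit model computation. First I would record that $\Delta_0=(d_H^* d_H)^2$ on $\Omega^0(X)$ and $\Delta_1=(D^1)^* D^1+(d_H d_H^*)^2$ on $\Omega^1(H)$ are both positive, self-adjoint, and maximally hypoelliptic of Heisenberg order four on the closed contact three-manifold $X$, whose homogeneous dimension is $Q=4$ (for $\Delta_1$ this is already noted in \S\ref{partsec}). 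By the Heisenberg pseudodifferential calculus (see \cite{p2} and \cite{rs}) the heat operators $e^{-t\Delta_i}$ are trace-class and
\[
\text{Tr}\!\left(e^{-t\Delta_i}\right)\;\sim\;\sum_{j\ge 0} a_j^{(i)}\,t^{(j-4)/4},\qquad t\to 0^+ ,
\]
with $a_j^{(i)}=\int_X e_j^{(i)}$ the integral of a local density. A standard Mellin-transform argument then gives $\zeta_i(0)=a_4^{(i)}-\dim\text{Ker}\,\Delta_i$. Combined with the Hodge-theoretic identifications $\text{Ker}\,\Delta_0\cong H^0(X,d_H)$ (the constants) and $\text{Ker}\,\Delta_1\cong H^1(X,d_H)$ for the complex \eqref{complex} --- the latter using $\text{Ker}\,D^1=\text{Ker}\,D$ and the Hodge decomposition for the Rumin complex --- this reduces the Proposition to the purely local identity $a_4^{(0)}-a_4^{(1)}=-\frac{1}{512}\int_X R^2\,\kappa\wedge d\kappa$.

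\emph{Invariance theory.} Next I would invoke the structure of heat-coefficient densities for hypoelliptic operators in the Heisenberg calculus: $e_4^{(i)}$ is a universal polynomial, of the appropriate Heisenberg weight, in the Tanaka--Webster curvature, the pseudohermitian torsion, and their covariant derivatives (in both horizontal and Reeb directions), with coefficients depending only on the leading Heisenberg symbol of $\Delta_i$. Since $(X,\phi,\xi,\kappa,g)$ is quasi-regular K-contact, hence Sasakian in dimension three, the pseudohermitian torsion vanishes identically, so the only surviving local scalars of the correct weight are $R^2$ and total derivatives such as $d_H^* d_H R$ and $\mathcal{L}_\xi$ of a weight-two scalar; the latter all integrate to zero against $\kappa\wedge d\kappa$. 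Hence $a_4^{(i)}=c_i\int_X R^2\,\kappa\wedge d\kappa$ for universal constants $c_i$, and the whole Proposition comes down to the numerical identity $c_0-c_1=-\tfrac{1}{512}$.

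\emph{The universal constant.} To fix $c_0-c_1$, I would either perform the Heisenberg-symbol computation directly --- passing to the osculating (Heisenberg group) models of $\Delta_0$ and $\Delta_1$, computing the relevant terms of the resolvent parametrix, and reading off the coefficient of $t^0$ --- or test the identity on a computable Sasakian three-manifold, most conveniently the round $S^3$ with its standard structure: there $R$ is constant, the full spectra of $\Delta_0$ and $\Delta_1$ are accessible from the representation theory of $SU(2)$, and the kernels are known ($\dim\text{Ker}\,\Delta_0=1$, $\dim\text{Ker}\,\Delta_1=\dim H^1_{dR}(S^3)=0$), so that $\zeta_0(0)-\zeta_1(0)$, and hence $c_0-c_1$, is determined. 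The required coefficient can also be extracted from the heat-trace computations of Rumin--Seshadri \cite{rs} and of Biquard--Herzlich--Rumin \cite{bhr}.

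\emph{Main obstacle.} The first two steps are soft: the existence and locality of the expansion, the identification of the kernels with the Rumin cohomology, and the reduction to a universal multiple of $\int_X R^2\,\kappa\wedge d\kappa$ are routine consequences of the hypoelliptic calculus and CR invariance theory. The hard part is the explicit evaluation of $c_0-c_1$, which is the only place the precise number $\tfrac{1}{512}$ enters. The Heisenberg heat-coefficient bookkeeping is significantly heavier than its Riemannian analogue, chiefly because the leading symbol of $\Delta_1$ on $\Omega^1(H)$ couples the order-two vertical operator $\mathcal{L}_\xi$ appearing in $D^1$ with the order-four horizontal operator $(d_H d_H^*)^2$, so the anisotropic homogeneities must be tracked carefully throughout; I would use the $S^3$ evaluation as an independent check on the final value.
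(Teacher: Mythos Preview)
Your approach is correct in outline, but it is considerably heavier than the paper's. The paper's proof is essentially a two-line citation: first, from Rumin--Seshadri (\cite{rs}; Cor.~3.8), the duality of the contact complex in dimension three yields the algebraic relation $\hat\zeta_1(0)=2\hat\zeta_0(0)$, where $\hat\zeta_i(s)=\zeta_i(s)+\dim\text{Ker}\,\Delta_i$; second, from Biquard--Herzlich--Rumin (\cite{bhr}; Theorem~8.8), on CR-Seifert manifolds one has the explicit value $\hat\zeta_0(0)=\frac{1}{512}\int_X R^2\,\kappa\wedge d\kappa$. Subtraction then gives $\hat\zeta_0(0)-\hat\zeta_1(0)=-\hat\zeta_0(0)$ and the result follows by shifting back by the kernel dimensions. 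The shortcut you do not exploit is the relation $\hat\zeta_1(0)=2\hat\zeta_0(0)$: it reduces everything to the single \emph{scalar} heat coefficient $a_4^{(0)}$, bypassing entirely the bundle-valued computation for $\Delta_1$ on $\Omega^1(H)$ that you flag as the main obstacle (the anisotropic coupling of $\mathcal{L}_\xi$ with $(d_H d_H^*)^2$). Your invariance-theory reduction and $S^3$ test would eventually recover the same constant, but this amounts to reproving from scratch the content of the two cited results rather than invoking them.
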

\begin{proof}
Let
\begin{eqnarray*}
\hat{\zeta_{0}}(s)&:=&\text{dim Ker}\Delta_{0}+\zeta_{0}(s)\\
\hat{\zeta_{1}}(s)&:=&\text{dim Ker}\Delta_{1}+\zeta_{1}(s)
\end{eqnarray*}
denote the zeta functions as defined in \cite{rs}.  From (\cite{rs} ; Cor. 3.8), one has that
\begin{equation*}
\hat{\zeta_{1}}(0)=2\hat{\zeta_{0}}(0)
\end{equation*}
for all 3-dimensional contact manifolds.  By (\cite{bhr} ; Theorem 8.8), one knows that on CR-Seifert manifolds that
\begin{equation*}
\hat{\zeta_{0}}(0)=\hat{\zeta}(\Delta_{0})(0)=\hat{\zeta}(\Delta_{0}^{2})(0)=\frac{1}{512}\int_{X}R^{2}\,\kappa\wedge d\kappa
\end{equation*}
Thus,
\begin{equation*}
\hat{\zeta_{1}}(0)=\frac{1}{256}\int_{X}R^{2}\,\kappa\wedge d\kappa
\end{equation*}
By our definition of the zeta functions, which differ from that of \cite{rs} by constant dimensional terms, we therefore have
\begin{eqnarray*}
\zeta_{0}(0)&=&\frac{1}{512}\int_{X}R^{2}\,\kappa\wedge d\kappa-\text{dim Ker}\Delta_{0}\\
\zeta_{1}(0)&=&\frac{1}{256}\int_{X}R^{2}\,\kappa\wedge d\kappa-\text{dim Ker}\Delta_{1}
\end{eqnarray*}
Hence,
\begin{eqnarray*}
\zeta_{0}(0)-\zeta_{1}(0)&=&\left[\frac{1}{512}\int_{X}R^{2}\,\kappa\wedge d\kappa-\text{dim Ker}\Delta_{0}\right]-\left[\frac{1}{256}\int_{X}R^{2}\,\kappa\wedge d\kappa-\text{dim Ker}\Delta_{1}\right]\\
                         &=&\left(-\frac{1}{512}\int_{X}R^{2}\,\kappa\wedge d\kappa\right)+\text{dim Ker}\Delta_{1}-\text{dim Ker}\Delta_{0}\\
                         &=&\left(-\frac{1}{512}\int_{X}R^{2}\,\kappa\wedge d\kappa\right)+\text{dim} H^{1}(X,d_{H})-\text{dim} H^{0}(X,d_{H}).
\end{eqnarray*}
and the result is proven.
\end{proof}
\noindent
We now have the following
\begin{prop}\label{rigdet}
The term inside of the integral of Eq. \ref{intzeta} has the following expression in terms of the hypoelliptic Laplacians, $\Delta_{0}$ and $\Delta_{1}$, as defined in Prop. \ref{Jdepend}:
\begin{equation}
\frac{\left[det'(d_{H}^{*}d_{H})\right]^{1/2}}{\left[det'(-k\star_{H}D^{1})\right]^{1/2}}=k^{n_{X}}\frac{[det'(\Delta_{0})]^{1/2}}{\left[det'(\Delta_{1})\right]^{1/4}}
\end{equation}
where
\begin{equation}\label{nX}
n_{X}:=\frac{1}{2}(\text{dim} H^{1}(X,d_{H})-\text{dim} H^{0}(X,d_{H})).
\end{equation}
\end{prop}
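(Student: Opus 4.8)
The plan is to take the formal definition of $det'(-k\star_{H}D^{1})$ in Eq. \ref{regdet} at face value and reduce the determinant ratio
\[
\frac{[det'(d_{H}^{*}d_{H})]^{1/2}}{[det'(-k\star_{H}D^{1})]^{1/2}}
\]
appearing in Eq. \ref{intzeta} to an explicit power of $k$ times a ratio of regularized determinants of the two hypoelliptic Laplacians $\Delta_{0}$ and $\Delta_{1}$, at which point the claimed identity can be read off directly.

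First I would substitute into Eq. \ref{regdet} the identifications $S^{2}+TT^{*}=k^{2}\Delta_{1}$ and $TT^{*}=k^{2}(d_{H}d_{H}^{*})^{2}$ recorded there. The operator $(d_{H}d_{H}^{*})^{2}$ acts on $\Omega^{1}(H)$ while $\Delta_{0}=(d_{H}^{*}d_{H})^{2}$ acts on $\Omega^{0}(X)$, but the restriction of $d_{H}$ to $(\ker d_{H})^{\perp}$ is a partial isometry onto $\overline{\mathrm{im}\,d_{H}}\subset\Omega^{1}(H)$ intertwining $d_{H}^{*}d_{H}$ with $d_{H}d_{H}^{*}$; hence these operators have the same nonzero spectrum, so that $\zeta\!\big((d_{H}d_{H}^{*})^{2}\big)(s)=\zeta_{0}(s)$ identically and, in particular, $det'\!\big((d_{H}d_{H}^{*})^{2}\big)=det'(\Delta_{0})$. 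Applying the scaling law Eq. \ref{delta} with $c=k^{2}$ then gives $det'(k^{2}\Delta_{1})=k^{2\zeta_{1}(0)}det'(\Delta_{1})$ and $det'(k^{2}(d_{H}d_{H}^{*})^{2})=k^{2\zeta_{0}(0)}det'(\Delta_{0})$, whence
\[
det'(-k\star_{H}D^{1})=C(k,J)\,k^{\zeta_{1}(0)-\zeta_{0}(0)}\,\frac{[det'(\Delta_{1})]^{1/2}}{[det'(\Delta_{0})]^{1/2}}.
\]
I would also record the elementary identity $\zeta_{A^{2}}(s)=\zeta_{A}(2s)$, valid for any positive operator $A$ with discrete spectrum, which yields $det'(A^{2})=[det'(A)]^{2}$; with $A=d_{H}^{*}d_{H}$ this gives $det'(d_{H}^{*}d_{H})=[det'(\Delta_{0})]^{1/2}$, so the numerator of the determinant ratio equals $[det'(\Delta_{0})]^{1/4}$.

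Assembling the pieces, the determinant ratio becomes
\[
[C(k,J)]^{-1/2}\,k^{(\zeta_{0}(0)-\zeta_{1}(0))/2}\,\frac{[det'(\Delta_{0})]^{1/2}}{[det'(\Delta_{1})]^{1/4}},
\]
so only the exponent of $k$ remains to be computed. This is where the definition of $C(k,J)$ and Proposition \ref{Jdepend} are used together: $[C(k,J)]^{-1/2}$ contributes a $k$-power proportional to $\int_{X}R^{2}\,\kappa\wedge d\kappa$, while $\tfrac12(\zeta_{0}(0)-\zeta_{1}(0))$ contributes, by Proposition \ref{Jdepend}, an opposite Tanaka--Webster curvature term together with $\tfrac12(\text{dim}\,H^{1}(X,d_{H})-\text{dim}\,H^{0}(X,d_{H}))=n_{X}$. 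The curvature contributions cancel --- which is precisely why the factor $C(k,J)$ was built into Eq. \ref{regdet} in the first place --- leaving exactly $k^{n_{X}}$, as asserted.

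I expect the real work to be analytic rather than algebraic. One has to know that all of the regularized determinants involved are genuinely well defined, i.e. that the heat traces of $d_{H}^{*}d_{H}$, $(d_{H}d_{H}^{*})^{2}$, $\Delta_{0}$ and $\Delta_{1}$ admit the small-time asymptotic expansions needed for the associated spectral zeta functions to continue meromorphically and be regular at $0$; and one must check that the partial-isometry argument transports not merely the bare spectra but the zeta functions and regularized determinants themselves. The first point is supplied by the hypoellipticity and heat-kernel results quoted in the earlier results of this section, and the second is the standard supersymmetry comparison of $d_{H}^{*}d_{H}$ and $d_{H}d_{H}^{*}$. With these in hand the proposition follows from the short computation above.
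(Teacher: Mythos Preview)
Your approach is essentially identical to the paper's: substitute Eq.~\ref{regdet}, use $det'(A^{2})=(det'\,A)^{2}$ together with the isospectrality of $d_{H}^{*}d_{H}$ and $d_{H}d_{H}^{*}$ to rewrite everything in terms of $\Delta_{0}$ and $\Delta_{1}$, extract the powers of $k$ via the scaling law Eq.~\ref{delta}, and then invoke Proposition~\ref{Jdepend} so that the curvature piece of $\tfrac{1}{2}(\zeta_{0}(0)-\zeta_{1}(0))$ is absorbed by the prefactor $C(k,J)$, leaving $k^{n_{X}}$. The paper's proof is exactly this chain of equalities, with the same two observations (isospectrality and Proposition~\ref{Jdepend}) doing the work.

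One bookkeeping point you should watch: taking the square root of Eq.~\ref{regdet} literally produces $C(k,J)^{-1/2}$, as you write, whereas the paper's displayed computation carries $C(k,J)^{-1}$ in the first line. The exponent in the definition $C(k,J)=k^{-\frac{1}{1024}\int_{X}R^{2}\,\kappa\wedge d\kappa}$ was calibrated so that $C(k,J)^{-1}$ cancels exactly against the curvature part $k^{-\frac{1}{1024}\int_{X}R^{2}\,\kappa\wedge d\kappa}$ coming from $k^{(\zeta_{0}(0)-\zeta_{1}(0))/2}$; with your $C(k,J)^{-1/2}$ the two contributions do \emph{not} cancel (you are off by a factor of two in the exponent). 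So either follow the paper and carry $C(k,J)^{-1}$, or adjust the normalization of $C(k,J)$ accordingly. This is a normalization issue in the definition rather than a flaw in your strategy.
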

\begin{proof}
\begin{eqnarray}\label{cal1}
&&\\
\frac{\left[det'(d_{H}^{*}d_{H})\right]^{1/2}}{\left[det'(-k\star_{H}D_{\kappa}^{1})\right]^{1/2}}&=&C(k,J)^{-1}\cdot\frac{\left[det'(d_{H}^{*}d_{H})^{2}\right]^{1/4}\cdot \left[det'k^{2} (d_{H}d_{H}^{*})^{2}\right]^{1/4}}{\left[det'(k^{2}\Delta)\right]^{1/4}}\nonumber\\\label{cal2}
                    &=&C(k,J)^{-1}\cdot\frac{k^{\zeta_{0}(0)/2}\left[det'(\Delta_{0})\right]^{1/4}\cdot \left[det'(\Delta_{0})\right]^{1/4}}{k^{\zeta_{1}(0)/2}\left[det'(\Delta_{1})\right]^{1/4}}\\
                    &=&C(k,J)^{-1}\cdot k^{\frac{1}{2}(\zeta_{0}(0)-\zeta_{1}(0))}\frac{[det'(\Delta_{0})]^{1/2}}{\left[det'(\Delta_{1})\right]^{1/4}}\nonumber\\
                    &=&C(k,J)^{-1}\cdot C(k,J)\cdot k^{n_{X}}\frac{[det'(\Delta_{0})]^{1/2}}{\left[det'(\Delta_{1})\right]^{1/4}},\,\text{Prop. \ref{Jdepend}},\nonumber\\
                    &=&k^{n_{X}}\frac{[det'(\Delta_{0})]^{1/2}}{\left[det'(\Delta_{1})\right]^{1/4}}\nonumber
\end{eqnarray}
\noindent
where the second last line comes from Eq. \ref{dimen}.  Also note that $d_{H}^{*}d_{H}$ and $d_{H}d_{H}^{*}$ have the same eigenvalues (by standard arguments), which allows us to proceed to Eq. \ref{cal2} from Eq. \ref{cal1}.
\end{proof}
\begin{rem}\label{rmknX}
Note that by (\cite{rs} ; Prop. 2.2), the definition of $n_{X}$ (see Eq. \ref{nX}) here is exactly equal to the quantity $m_{X}:=\frac{1}{2}(\text{dim} H^{1}(X,d)-\text{dim} H^{0}(X,d))$ of (\cite{m} ; Eq. 5.18).  This shows that our partition function has the same $k$-dependence as that in \cite{m}.
\end{rem}
\section{The eta invariant}\label{esec}
Next we regularize the signature $sgn(-\star_{H}D^{1})$ via the eta-invariant and set $sgn(-\star_{H}D^{1})=\eta(-\star_{H}D^{1})(0):=\eta(-\star_{H}D^{1})$ where
\begin{equation}
\eta(-\star_{H}D^{1})(s):=\sum_{\lambda\in\text{spec}^{*}(-\star_{H}D^{1})}(sgn\lambda)|\lambda|^{-s}
\end{equation}
\noindent
Finally, we may now write the result for our partition function
\begin{eqnarray}\label{ctorsion}
&&\\
Z_{U(1)}(X,p,k)&=&k^{n_X}e^{\pi i k S_{X,P}(A_{0})}e^{\frac{\pi i}{4}\eta(-\star_{H}D^{1})}\int_{\mathcal{M}_{P}}\,\, \frac{1}{Vol(H)}\frac{[det'(\Delta_{0})]^{1/2}}{\left[det'(\Delta_{1})\right]^{1/4}}\,\,\nu\nonumber
\end{eqnarray}
where $n_{X}:=\frac{1}{2}(\text{dim} H^{1}(X,d_{H})-\text{dim} H^{0}(X,d_{H}))$.  Note that $\nu$ is a measure on $\mathcal{M}_{P}$ (the moduli space of flat connections modulo the gauge group) relative to the horizontal structure on the tangent space of $\mathcal{M}_{P}$.\\
\\
\section{Torsion}\label{tsec}
Now we will study the quantity $\frac{1}{Vol(H)}\frac{[det'(\Delta_{0})]^{1/2}}{\left[det'(\Delta_{1})\right]^{1/4}}\,\,\nu$ inside of the integral in Eq. \ref{ctorsion}, and in particular how it is related to the analytic contact torsion $T_{C}$.  First, recall that (\cite{rs};Eq. 16)
\begin{equation}\label{torsion}
T_{C}:=\text{exp}\left( \frac{1}{4}\sum_{q=0}^{3}(-1)^{q}w(q)\zeta'(\Delta_{q})(0)\right)
\end{equation}
where
\begin{equation}
w(q)=
\begin{cases} q & \text{if $q\leq 1$,}
\\
q+1 &\text{if $q>1$.}
\end{cases}
\end{equation}
in the case where $\text{dim}(X)=3$.  Note that we have chosen a sign convention that leads to the inverse of the definition of $T_C$ in \cite{rs}.  Recall (\cite{rs}, Eq. 10),
\begin{equation}\label{conlapl}
\Delta_{q}=
\begin{cases} (d_{H}^{*}d_{H}+d_{H}d_{H}^{*})^{2} & \text{if $q = 0,3$,}
\\
D^{*}D+(d_{H}d_{H}^{*})^{2} &\text{if $q=1$.}
\\
DD^{*}+(d_{H}^{*}d_{H})^{2} &\text{if $q=2$.}
\end{cases}
\end{equation}
We would, however, like to work with torsion when viewed as a density on the determinant line
\begin{eqnarray*}
|\text{det}H^{\bullet}(X,d_{H})^{*}|&:=&|\text{det}H^{0}(X,d_{H})|\otimes|\text{det}H^{1}(X,d_{H})^{*}|\\
                                    &\otimes&|\text{det}H^{2}(X,d_{H})|\otimes|\text{det}H^{3}(X,d_{H})^{*}|
\end{eqnarray*}
We follow \cite{rsi} and \cite{m} and make the analogous definition.
\begin{define}\label{torsdef}
Define the analytic torsion as a density as follows
\begin{equation*}
T^{d}_{C}:=T_{C}\cdot\delta_{|\text{det}H^{\bullet}(X,d_{H})|}
\end{equation*}
where $T_{C}$ is as defined in Eq. \ref{torsion}, and
\begin{equation*}
\delta_{|\text{det}H^{\bullet}(X,d_{H})|}:=\otimes_{q=0}^{dim X}|\nu_{1}^{q}\wedge\cdots\wedge \nu_{b_{q}}^{q}|^{(-1)^{q}}
\end{equation*}
where $\{\nu_{1}^{q},\cdots ,\nu_{b_{q}}^{q}\}$ is an orthonormal basis for the space of harmonic contact forms $\mathcal{H}^{q}(X,d_{H})$ with the inner product defined in Eq. \ref{inner}.  Note that $\mathcal{H}^{q}(X,d_{H})$ is canonically identified with the cohomology space $H^{q}(X,d_{H})$, and   $b_{q}:=\text{dim}(H^{q}(X,d_{H}))$ is
the $q^{th}$ contact Betti number.
\end{define}
\noindent
Let
\begin{equation*}
\nu^{(q)}:=\nu_{1}^{q}\wedge\cdots\wedge \nu_{b_{q}}^{q}
\end{equation*}
and write the analytic torsion of a compact connected Seifert 3-manifold $X$ as
\begin{eqnarray}
T^{d}_{C}=T_{C}\times |\nu^{(0)}|\otimes|\nu^{(1)}|^{-1}\otimes|\nu^{(2)}|\otimes|\nu^{(3)}|^{-1}.
\end{eqnarray}
In terms of regularized determinants, we have
\begin{equation}
T_{C}=\left[(det'(\Delta_{0}))^{0}\cdot (det'(\Delta_{1}))^{1}\cdot (det'(\Delta_{2}))^{-3}\cdot (det'(\Delta_{3}))^{4}\right]^{1/4}
\end{equation}
where $\Delta_{q}$, $0\leq q\leq 3$, denotes the Laplacians on the contact complex
as defined in (\cite{rs} ; Eq. 10) and recalled in Eq. \ref{conlapl} above.  This notation agrees with our notation for $\Delta_{0}$, $\Delta_{1}$ as in Eq. \ref{delta}.  The Hodge $\star$-operator induces the equivalences $\Delta_{q}\simeq \Delta_{3-q}$ (see \cite{rs};Theorem 3.4) and allows us to write
\begin{eqnarray}
T_{C}&=&\left[(det'(\Delta_{0}))^{0}\cdot (det'(\Delta_{1}))^{1}\cdot (det'(\Delta_{2}))^{-3}\cdot (det'(\Delta_{3}))^{4}\right]^{1/4}\\\label{eq3}
     &=&\frac{det'(\Delta_{0})}{(det'(\Delta_{1}))^{1/2}}
\end{eqnarray}
Also, from the isomorphisms $H^{q}(X,\R)\simeq H^{q}(X,d_{H})$ of Prop. 2.2 of \cite{rs}, we have Poincar\'{e} duality $H^{q}(X,d_{H})\simeq H^{3-q}(X,d_{H})^{*}$, and therefore
\begin{equation}\label{eqzo}
T^{d}_{C}=T_{C}\times |\nu^{0}|^{\otimes 2}\otimes(|\nu^{1}|^{-1})^{\otimes 2}
\end{equation}
Moreover, by \cite{r} ( Prop. 12), $\mathcal{H}^{q}(X,d_{H})=\mathcal{H}^{q}(X,\R)$, and thus any orthonormal basis $\nu^{(0)}$ of $\mathcal{H}^{0}(X,d_{H})\simeq\R$ is a constant such that
\begin{equation}\label{eq1}
|\nu^{(0)}|=\left[\int_{X}\kappa\wedge d\kappa\right]^{-1/2}
\end{equation}
Also, recall that the tangent space $T_{A}\mathcal{M}_{P}\simeq H^{1}(X,d_{H}) \simeq H^{1}(X,\R)$, at any point $A\in\mathcal{M}_{P}$.  The measure $\nu$ on $\mathcal{M}_{P}$ that occurs in Eq. \ref{ctorsion} is defined relative to the metric on $H^{1}(X,d_{H})\simeq\mathcal{H}^{1}(X,d_{H})$, which can be identified with the usual $L^{2}$-metric on forms.    Thus
the measure $\nu$ may be identified with the inverse of the density $|\nu^{(1)}|$ by dualizing the orthogonal basis $\{\nu_{1}^{1}, \ldots, \nu_{b_{1}}^{1}\}$ for $\mathcal{H}^{1}(X,d_{H})$; i.e.
\begin{equation}\label{eq2}
\nu=|\nu^{(1)}|^{-1}=|\nu_{1}^{1}\wedge \cdots \wedge \nu_{b_{1}}^{1}|^{-1}
\end{equation}
Putting together equations \ref{eq3}, \ref{eq1}, \ref{eq2} into equation \ref{eqzo}, we have
\begin{eqnarray}
T^{d}_{C}&=&T_{C}\times |\nu^{0}|^{\otimes 2}\otimes(|\nu^{1}|^{-1})^{\otimes 2}\\
         &=&\frac{det'(\Delta_{0})}{(det'(\Delta_{1}))^{1/2}}\cdot\left[\int_{X}\kappa\wedge d\kappa\right]^{-1} \nu^{\otimes 2}\\
         &=&\text{Vol}(H)^{-2}\frac{det'(\Delta_{0})}{(det'(\Delta_{1}))^{1/2}}\cdot\nu^{\otimes 2}
\end{eqnarray}
We have thus proven the following,
\begin{prop}\label{tprop}
The contact analytic torsion, when viewed as a density $T^{d}_{C}$ as in definition \ref{torsdef}, can be identified as follows:
\begin{equation}
(T^{d}_{C})^{1/2}=\frac{1}{Vol(H)}\frac{[det'(\Delta_{0})]^{1/2}}{\left[det'(\Delta_{1})\right]^{1/4}}\,\,\nu
\end{equation}
\end{prop}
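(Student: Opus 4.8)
The plan is to unwind Definition \ref{torsdef} and identify its two ingredients — the scalar torsion $T_{C}$ of Eq. \ref{torsion} and the determinant-line density $\delta_{|\det H^{\bullet}(X,d_{H})|}$ — separately, and then reassemble them and take a square root of densities.

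First I would rewrite $T_{C}$ purely in terms of regularized determinants. Inserting the weights $w(q)$ into the defining product gives
\[
T_{C}=\left[(det'(\Delta_{0}))^{0}\cdot(det'(\Delta_{1}))^{1}\cdot(det'(\Delta_{2}))^{-3}\cdot(det'(\Delta_{3}))^{4}\right]^{1/4},
\]
and the Hodge $\star$-induced equivalences $\Delta_{q}\simeq\Delta_{3-q}$ of (\cite{rs}; Theorem 3.4) collapse this to $T_{C}=det'(\Delta_{0})/(det'(\Delta_{1}))^{1/2}$; this is the algebraic step responsible for the exponent $1/4$ on $det'(\Delta_{1})$ in the target formula, so it is worth isolating. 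Next I would reduce the density factor: the isomorphisms $H^{q}(X,\R)\simeq H^{q}(X,d_{H})$ (\cite{rs}; Prop. 2.2) together with Poincar\'e duality $H^{q}(X,d_{H})\simeq H^{3-q}(X,d_{H})^{*}$ pair the $q=0$ factor with $q=3$ and the $q=1$ factor with $q=2$, so that $\delta_{|\det H^{\bullet}(X,d_{H})|}=|\nu^{(0)}|^{\otimes 2}\otimes(|\nu^{(1)}|^{-1})^{\otimes 2}$.

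Two normalizations then finish the computation. By (\cite{r}; Prop. 12) the degree-zero harmonic space consists of the constants, so a unit vector for the inner product of Eq. \ref{inner} satisfies $|\nu^{(0)}|=\left[\int_{X}\kappa\wedge d\kappa\right]^{-1/2}=Vol(H)^{-1}$; and the measure $\nu$ on $\mathcal{M}_{P}$ entering Eq. \ref{ctorsion}, being defined from the $L^{2}$-metric on $\mathcal{H}^{1}(X,d_{H})\simeq T_{A}\mathcal{M}_{P}$, is exactly the dual density $|\nu^{(1)}|^{-1}$. Substituting these into $T^{d}_{C}=T_{C}\cdot\delta_{|\det H^{\bullet}(X,d_{H})|}$ yields $T^{d}_{C}=Vol(H)^{-2}\,\frac{det'(\Delta_{0})}{(det'(\Delta_{1}))^{1/2}}\,\nu^{\otimes 2}$, and taking the positive square root gives $(T^{d}_{C})^{1/2}=\frac{1}{Vol(H)}\frac{[det'(\Delta_{0})]^{1/2}}{[det'(\Delta_{1})]^{1/4}}\,\nu$, which is the assertion.

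I expect the genuine difficulty to be bookkeeping rather than analysis. One must keep the orientation/sign convention for $T_{C}$ (chosen here inverse to that of \cite{rs}) consistent throughout, check that all four factors of the determinant line really do pair off under Poincar\'e duality with the stated exponents, and — most delicately — verify that the identification $\nu=|\nu^{(1)}|^{-1}$ uses the same $L^{2}$-normalization as the measure originally obtained by pushing the formal measure on $\bar{\mathcal{A}}_{P}/\mathcal{G}_{P}$ down to $\mathcal{M}_{P}$. The analytic input — existence and meromorphy of the zeta functions, regularity at $s=0$, and maximal hypoellipticity of the $\Delta_{q}$ — is already available from \cite{rs} and \cite{p2} and can simply be quoted.
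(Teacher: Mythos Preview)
Your proposal is correct and follows essentially the same route as the paper: you reduce $T_{C}$ via the $\star$-duality $\Delta_{q}\simeq\Delta_{3-q}$, collapse the determinant-line density using Poincar\'e duality and (\cite{rs}; Prop.~2.2), and then invoke (\cite{r}; Prop.~12) and the $L^{2}$-identification $\nu=|\nu^{(1)}|^{-1}$ to obtain $T^{d}_{C}=Vol(H)^{-2}\,\frac{det'(\Delta_{0})}{(det'(\Delta_{1}))^{1/2}}\,\nu^{\otimes 2}$ before taking the square root. The steps, citations, and intermediate formulae match the paper's argument almost verbatim.
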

Our partition function is now
\begin{equation}\label{newpar}
\bar{Z}_{U(1)}(X,p,k)=k^{n_X}e^{\pi i k S_{X,P}(A_{0})}e^{\frac{\pi i}{4}\eta(-\star_{H}D^{1})}\int_{\mathcal{M}_{P}}\,\, (T^{d}_{C})^{1/2}
\end{equation}
This partition function should be completely equivalent to the partition function defined in (\cite{m} ; Eq. 7.27):
\begin{equation}\label{oldpar}
Z_{U(1)}(X,p,k)=k^{m_X}e^{\pi i k S_{X,P}(A_{0})}e^{\frac{\pi i}{4}\eta(-\star d)}\int_{\mathcal{M}_{P}}\,\, (T^{d}_{RS})^{1/2}.
\end{equation}
Our goal in the remainder is to show that this is indeed the case.  Our first observation is that $(T^{d}_{C})^{1/2}$ is equal to the Ray-Singer torsion $(T^{d}_{RS})^{1/2}$ that occurs in (\cite{m} ; Eq. 7.27).  This follows directly from (\cite{rs} ; Theorem 4.2); note that their sign convention makes $T_C$ the inverse of our definition.

\section{Regularizing the eta-invariants}\label{fsec}
Since we have seen that our $k$-dependence matches that in \cite{m} (i.e. $m_{X}=n_{X}$ ; cf. Remark \ref{rmknX}), the only thing left to do is to reconcile the eta invariants, $\eta(-\star_{H}D^{1})$ and $\eta(-\star d)$.  As observed in \cite{w3}, the correct quantity to compare our eta invariant to would be
\begin{equation}\label{reg1}
\frac{\eta(-\star d)}{4}+\frac{1}{12}\frac{\text{CS}(A^{g})}{2\pi}.
\end{equation}
where,
\begin{equation}
\text{CS}(A^{g})=\frac{1}{4\pi}\int_{X}Tr(A^{g}\wedge dA^{g}+\frac{2}{3} A^{g}\wedge A^{g}\wedge A^{g})
\end{equation}
is the gravitational Chern-Simons term, with $A^{g}$ the Levi-Civita connection on the spin bundle of $X$ for a given metric $g$ on $X$.  See Appendix \ref{appen2} for a short exposition on the regularization of $\eta(-\star d)$ in Eq. \ref{reg1}.  It was noticed in \cite{w3} that in the quasi-classical limit, quantum anomalies can occur that can break topological invariance.  Invariance may be restored in this case only after adding a counterterm to the eta invariant.  Our job then is to perform a similar analysis for the eta invariant $\eta(-\star_{H}D^{1})$, which depends on a choice of metric.  Of course, our choice of metric is natural in this setting and is adapted to the contact structure.  One possible approach is to consider variations over the space of such natural metrics and calculate the corresponding variation of the eta invariant, giving us a local formula for the counterterm that needs to be added.  Such a program has already been initiated in \cite{bhr}.\\
\\
Our starting point is the conjectured equivalence that results from the identification of Eq.'s \ref{newpar} and \ref{oldpar}:
\begin{equation}\label{etavar}
e^{\pi i\left[\frac{\eta(-\star d)}{4}+\frac{1}{12}\frac{\text{CS}(A^{g})}{2\pi}\right]}\text{``$=$''}e^{\frac{\pi i}{4}\left[\eta(-\star_{H} D^{1})+C_{T}\right]}
\end{equation}
where $C_{T}$ is some appropriate counterterm that yields an invariant comparable to the left hand of this equation.  As noted in Appendix \ref{appen2}, the left hand side of this equation depends on a choice of $2$-framing on $X$, and since we have a rule (cf. Eq. \ref{partform}) for how the partition function transforms when the framing is twisted, we basically have a topological invariant.  Alternatively, as also noted in Appendix \ref{appen2}, one can use the main result of \cite{at} and fix the canonical $2$-framing on $TX\oplus TX$.  We therefore expect the same type of phenomenon for the right hand side of this equation, having at most a $\Z$-dependence on the regularization of our eta invariant, along with a rule that tells us how the partition function changes when our discrete invariants are ``twisted,'' once again yielding a topological invariant.\\
\\
Let us first make the statement of the conjecture of Eq. \ref{etavar} more precise.  We should have the following
\begin{conj}
$(X,\phi,\xi,\kappa,g)$ a closed quasi-regular K-contact three-manifold.  Then there exists a counterterm, $C_{T}$, such that
$$e^{\frac{\pi i}{4}\left[\eta(-\star_{H} D^{1})+C_{T}\right]}$$ is a topological invariant that is identically equal to the topological invariant $$e^{\pi i\left[\frac{\eta(-\star d)}{4}+\frac{1}{12}\frac{\text{CS}(A^{g})}{2\pi}\right]},$$ where $\text{CS}(A^{g})$ and all relevant operators are defined with respect to the metric $g$ on $X$ and we use the canonical 2-framing \cite{at}.
\end{conj}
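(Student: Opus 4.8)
The plan is to show that \emph{both} exponents equal $\tfrac{\pi i}{4}$ times one and the same explicit Seifert invariant, namely $1+\tfrac d3+4\sum_{j=1}^{N}s(\alpha_j,\beta_j)$, where $d=c_1(X)=n+\sum_j\beta_j/\alpha_j$ and $s$ is the Rademacher--Dedekind sum attached to the Seifert invariants $[n;(\alpha_1,\beta_1),\dots,(\alpha_N,\beta_N)]$ of $X$; the claimed identity, and the value of $C_T$, are then both recovered (as in Propositions \ref{lprop} and \ref{cprop}). For the left-hand side I would invoke the argument of Witten \cite{w3}, made rigorous through the Atiyah--Patodi--Singer theorem as sketched in Appendix \ref{appen2}: the combination $\tfrac14\eta(-\star d)+\tfrac1{12}\tfrac{\text{CS}(A^g)}{2\pi}$ is independent of the metric (modulo the $2$-framing), so I may evaluate it along the rescaled family $g_\epsilon=\epsilon^{-1}\kappa\otimes\kappa+\pi^{*}h$ and pass to the adiabatic limit $\epsilon\to\infty$. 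By Proposition \ref{mc2} the gravitational term then vanishes, $\lim_{\epsilon\to\infty}\text{CS}(A^{g_\epsilon})=0$, so the invariant equals $\tfrac14\lim_{\epsilon\to\infty}\eta(-\star d)(g_\epsilon)$; this adiabatic limit of the odd signature operator on the Seifert fibration $X\to\Sigma$ is computed by the equivariant Atiyah--Patodi--Singer/$G$-signature theorem applied to an orbifold disc bundle bounding $X$, and yields the Dedekind-sum expression above.

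For the right-hand side I would proceed in two steps. First, establish that $\eta(-\star_H D^1)+C_T$ is a topological invariant with $C_T=\tfrac1{512}\int_X R^2\,\kappa\wedge d\kappa$: using the variational calculus for the contact complex of Biquard--Herzlich--Rumin \cite{bhr}, compute the first variation of $\eta(-\star_H D^1)$ under a deformation of the compatible CR structure $J\in\text{End}(H)$ and of the contact form, observe that in dimension three the anomaly is a local integral in the Tanaka--Webster curvature, and check that after reconciling normalizations it equals $-\tfrac1{512}\,\delta\!\int_X R^2\,\kappa\wedge d\kappa$, so that $\eta(-\star_H D^1)+C_T$ has vanishing variation and is therefore constant on the (connected) space of quasi-regular K-contact structures compatible with the given Seifert structure --- up to the integer jumps of $\eta$ across zero eigenvalues, the analogue of the $2$-framing ambiguity on the other side. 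Second, evaluate this now-topological quantity directly on the Seifert fibration: decompose the contact complex into Fourier weight spaces for the locally free $U(1)$-action, so that $-\star_H D^1$ becomes a direct sum over weights $m\in\Z$ of transverse operators on $(\Sigma,h)$ (the $m=0$ piece being the transverse de Rham data), and $\eta(-\star_H D^1)$ is the corresponding regularized sum over $m$ --- note this is exact, not asymptotic, since $-\star_H D^1$ is insensitive to rescaling the fibre. Combining this with the closed-form contact-Laplacian zeta values of \cite{rs} and Proposition \ref{Jdepend}, and expressing $R$ through the transverse K\"ahler scalar curvature of $(\Sigma,h)$ so as to evaluate $C_T$ explicitly, one finds $\eta(-\star_H D^1)+C_T=1+\tfrac d3+4\sum_j s(\alpha_j,\beta_j)$, which matches the left-hand side; equality of the exponentials follows.

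The main obstacle I anticipate is the first step of the right-hand side analysis: extracting from \cite{bhr} the \emph{exact} constant $1/512$ in the three-dimensional variational formula for $\eta(-\star_H D^1)$. Their results fix the structure of the anomaly (a curvature integral), but pinning down the numerical coefficient requires careful bookkeeping of the several competing conventions in play --- the normalization $\star 1=\kappa\wedge d\kappa$, the signs in $\star_H=-\iota_\xi\circ\star$ and $(\star_H)^2=-1$, the precise form $D=\kappa\wedge(\mathcal L_\xi+d_H\star_H d_H)$ of the Rumin operator, and the Tanaka--Webster (rather than Riemannian) curvature normalization --- which is exactly the constant-chasing that also produced the factor $C(k,J)$ of Eq.\ \ref{regdet} and the coefficient in Proposition \ref{Jdepend}, so internal consistency with those provides a useful check. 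A secondary technical point, mirrored on the left, is to make ``topological invariant'' precise by controlling the $\Z$-valued spectral-flow jumps of the eta invariant and matching them to the transformation law under a change of $2$-framing.
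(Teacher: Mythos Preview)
Your overall strategy is sound and would succeed, but it takes a substantially longer route than the paper. The paper does \emph{not} compute each side to the Dedekind-sum expression and then match; that explicit formula appears only afterwards, in Proposition~\ref{cprop}, as a corollary. The actual proof of the conjecture (carried out as Proposition~\ref{lprop}) is a direct bridge via the renormalized invariant $\eta_0(X,\kappa)$: since the left-hand side is metric-independent, evaluate along $g_\epsilon$ and let $\epsilon\to\infty$; Proposition~\ref{mc2} kills the gravitational Chern--Simons term, and in the K-contact (torsion-free) case $\lim_{\epsilon\to\infty}\eta(g_\epsilon)=\eta_0(X,\kappa)$. Then one simply invokes Theorem~\ref{eta0thm} (which is Theorem~1.4 of \cite{bhr}) as a black box:
\[
\eta_0(X,\kappa)=\eta(-\star_H D^1)+\tfrac{1}{512}\!\int_X R^2\,\kappa\wedge d\kappa,
\]
and this already gives both the equality and the value of $C_T$ in one stroke. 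No separate variational analysis, no Fourier decomposition of the contact complex, and no independent $G$-signature computation of the adiabatic limit are needed.

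Your two-step treatment of the right-hand side --- first a variational argument to pin down the $1/512$, then a direct weight-space calculation to reach the Dedekind sums --- is essentially a rederivation of the two cited BHR results (Theorems~\ref{eta0thm} and~\ref{topthm}). That is a legitimate and more self-contained approach, and it has the merit of explaining \emph{why} the constant is $1/512$ rather than importing it, but it is considerably more work and, as you correctly flag, the constant-chasing is delicate. The paper's approach trades self-containment for economy: it reduces the entire conjecture to Proposition~\ref{mc2} plus a citation.
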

Our regularization procedure for $\eta(-\star_{H} D^{1})$ will be quite different than that used for $\eta(-\star d)$.  Since we are restricted to a class of metrics that are compatible with our contact structure, we are really only concerned with finding appropriate counterterms for $\eta(-\star_{H} D^{1})$ that will eliminate our dependence on the choice of contact form $\kappa$ and complex structure $J\in\text{End}(H)$.  In the case of interest, we observe that our regularization may be obtained in one stroke by introducing the \emph{renormalized $\eta$-invariant}, $\eta_{0}(X,\kappa)$, of $X$ that is discussed in (\cite{bhr} ; \S 3).  Before giving the definition of $\eta_{0}(X,\kappa)$, we require the following
\begin{lem}(\cite{bhr} ; Lemma 3.1)
Let $(X,J,\kappa)$ be a strictly pseudoconvex pseudohermitian 3-manifold.  The $\eta$-invariants of the family of metrics $g_{\epsilon}:=\epsilon^{-1}\kappa\otimes\kappa+d\kappa(\cdot,J\cdot)$ have a decomposition in homogeneous terms:
\begin{equation}\label{etalem}
\eta(g_{\epsilon})=\sum_{i=-2}^{2}\eta_{i}(X,\kappa)\epsilon^{i}.
\end{equation}
The terms $\eta_{i}$ for $i\neq 0$ are integrals of local pseudohermitian invariants of $(X,\kappa)$, and the $\eta_{i}$ for $i>0$ vanish when the Tanaka-Webster torsion, $\tau$, vanishes.
\end{lem}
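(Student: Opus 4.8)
\noindent
The plan is to deduce this, following \cite{bhr}, from the Atiyah--Patodi--Singer variation formula for $\eta$ under a change of metric, combined with a homogeneity (weight) count in the Tanaka--Webster calculus. First I would fix the CR structure $(H,J)$ and the contact form $\kappa$, so that $\epsilon\in(0,\infty)$ is the only varying datum, and note that the operator controlling $\eta(g_\epsilon)$ — the restriction of $-\star_{g_\epsilon}d$ to coexact $1$-forms, on which it is self-adjoint — has trivial kernel for every $\epsilon$; hence there is no spectral flow in the family and $\eta(g_\epsilon)$ is a genuinely smooth function of $\epsilon$. On this family I would then invoke the APS metric-variation formula: it expresses $\tfrac{d}{d\epsilon}\eta(g_\epsilon)$ as $\int_X\omega_\epsilon$, where $\omega_\epsilon$ is the transgression density built polynomially from the Levi-Civita connection of $g_\epsilon$, its curvature, and the variation tensor $\dot g_\epsilon=-\epsilon^{-2}\,\kappa\otimes\kappa$.

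The core of the argument is then to make the $\epsilon$-dependence of $\omega_\epsilon$ explicit. Since $g_\epsilon=\epsilon^{-1}\kappa\otimes\kappa+d\kappa(\cdot,J\cdot)$, I would rewrite the Levi-Civita data of $g_\epsilon$ in terms of the ($\epsilon$-independent) Tanaka--Webster connection, Webster scalar curvature $R$, and pseudohermitian torsion $\tau$ of $(X,\kappa)$; $\epsilon$ then enters $\omega_\epsilon$ only through fixed powers, determined by how many Reeb (vertical) legs appear in each contraction, and the constraint that $\omega_\epsilon$ be a $3$-form in dimension three bounds the number of such legs. Consequently $\omega_\epsilon$ is a \emph{finite} Laurent polynomial in $\epsilon$ whose coefficients are integrals over $X$ of local pseudohermitian invariants, and the admissible exponents are confined to $\{-3,-2,0,1\}$ — the essential point being that the $\epsilon^{-1}$ coefficient vanishes, since no nonzero pseudohermitian $3$-form carries the weight it would require (equivalently, $\eta(g_\epsilon)$ has no $\log\epsilon$ term, as is forced by its being finite for every $\epsilon>0$). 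Integrating in $\epsilon$ then gives $\eta(g_\epsilon)=\sum_{i=-2}^{2}\eta_i(X,\kappa)\,\epsilon^i$, with the $\eta_i$ for $i\neq0$ the integrals of the local invariants just produced and $\eta_0$ the integration constant — the genuinely spectral ``renormalized'' $\eta$-invariant, independent of the base point chosen in $\epsilon$.

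For the torsion statement I would note that an exponent $\epsilon^i$ with $i>0$ corresponds to strictly negative vertical weight, and that the pure-curvature local invariants assembled from $R$ and its covariant derivatives cannot attain these weights; every monomial of the requisite weight contributing to $\omega_\epsilon$ must therefore carry at least one factor of $\tau$ (or of $\nabla\tau$, $\nabla^2\tau$, \dots). Hence each density whose integral is $\eta_i$ with $i>0$ is divisible by the torsion tensor, and the corresponding $\eta_i$ vanishes identically whenever $\tau\equiv0$.

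The step I expect to be the main obstacle is the weight/homogeneity bookkeeping of the second paragraph: re-expressing the Levi-Civita curvature and the Pontryagin transgression of $g_\epsilon$ in pseudohermitian terms, checking that no $\epsilon^{-1}$ (hence no $\log\epsilon$) term survives, and classifying which local pseudohermitian $3$-forms can occur at each admissible weight. This is precisely the analysis carried out in \cite{bhr}, and for the detailed computations I would defer to that reference.
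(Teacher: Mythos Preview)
The paper does not actually prove this lemma: it is quoted verbatim from \cite{bhr} (their Lemma 3.1) and used as a black box, with no argument supplied. Your proposal therefore goes strictly beyond the paper's treatment, and what you sketch is essentially the strategy of \cite{bhr} itself --- the APS metric-variation formula for $\eta$, rewritten in terms of the Tanaka--Webster connection, followed by a homogeneity/weight count that forces the local transgression density to be a finite Laurent polynomial in $\epsilon$, with the positive-power coefficients carrying at least one factor of the torsion $\tau$.

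So there is no discrepancy to flag: your approach and the paper's (namely, ``see \cite{bhr}'') agree, and you correctly identify the one genuinely delicate step as the weight bookkeeping that bounds the range of exponents and kills the would-be $\log\epsilon$ term. Your observation that the relevant operator has trivial kernel on coexact $1$-forms (a coexact closed form is zero), hence no spectral flow and $\eta(g_\epsilon)$ is smooth in $\epsilon$, is also correct and is the right way to justify differentiating before invoking the local variation formula.
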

We then make the following
\begin{define}
Let $(X,\kappa)$ be a compact strictly pseudoconvex pseudohermitian 3-dimensional manifold.  The \emph{renormalized $\eta$-invariant}
$\eta_{0}(X,\kappa)$ of $(X,\kappa)$ is the constant term in the expansion of Eq. \ref{etalem} for the $\eta$-invariants of the family of metrics $g_{\epsilon}:=\epsilon^{-1}\kappa\otimes\kappa+d\kappa(\cdot,J\cdot)$.
\end{define}
Our assumption that $X$ is K-contact ensures that the Reeb flow preserves the metric.  In this situation, it is known that the Tanaka-Webster torsion necessarily vanishes (cf. \cite{bhr} ; \S 3).  In the case where the torsion of $(X,\kappa)$ vanishes, the terms $\eta_{i}(X,\kappa)$ in Eq. \ref{etalem} vanish for $i>0$, so that when $\epsilon\rightarrow \infty$, one has
\begin{equation}\label{vantor}
\eta_{0}(X,\kappa)=\lim_{\epsilon\rightarrow\infty}\eta(g_{\epsilon}):=\eta_{ad}
\end{equation}
The limit $\eta_{ad}$ is known as the \emph{adiabatic limit} and has been studied in \cite{bc} and \cite{dai}, for example.  The adiabatic limit is the case where the limit is taken as $\epsilon$ goes to infinity,
\begin{equation}
\eta_{ad}:=\lim_{\epsilon\rightarrow\infty}\eta(g_{\epsilon}),
\end{equation}
while the the renormalized $\eta$-invariant, $\eta_{0}(X,\kappa)$, is naturally interpreted as the constant term in the asymptotic expansion for $(\eta(g_{\epsilon}))$ in powers of $\epsilon$, when $\epsilon$ goes to $0$.  This reverse process of taking $\epsilon$ to $0$ is also known as the \emph{diabatic} limit.  When torsion vanishes (i.e. when the Reeb flow preserves the metric), Eq. \ref{vantor} is the statement that the diabatic and adiabatic limits agree.  One of the main challenges for our future work will be to extend beyond the case where torsion vanishes.  This will naturally involve the study of the diabatic limit.  For now, we are restricted to the case of vanishing torsion.  In this case, the main result that we will use is the following
\begin{thm}\label{eta0thm}(\cite{bhr} ; Theorem 1.4)
Let $X$ be a compact CR-Seifert 3-manifold, with $U(1)$-action generated by the Reeb field of an $U(1)$-invariant contact form $\kappa$.  If $R$ is the Tanaka-Webster curvature of $(X,\kappa)$ and $D^{1}$ is the middle degree operator of the contact complex (cf. Eq. \ref{complex} and \ref{Ddef}), then
\begin{equation}
\eta_{0}(X,\kappa)=\eta(-\star_{H}D^{1})+\frac{1}{512}\int_{X}R^{2}\,\,\kappa\wedge d\kappa.
\end{equation}
\end{thm}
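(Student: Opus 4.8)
The plan is to reduce the statement to an adiabatic-limit computation on the Seifert fibration and then to recognize that limit, term by term, in terms of Rumin's contact complex. The K-contact hypothesis will be used twice: once to collapse the expansion of Eq.~\ref{etalem}, and once to get an honest fibrewise Fourier decomposition along the Reeb circle.

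\textbf{Step 1 (Reduction to the adiabatic limit).} Since $(X,\phi,\xi,\kappa,g)$ is K-contact, the Reeb flow is isometric, so the Tanaka--Webster torsion vanishes; by the Lemma preceding the theorem the homogeneous terms $\eta_i(X,\kappa)$ with $i>0$ are zero, and therefore $\eta_0(X,\kappa)=\lim_{\epsilon\to\infty}\eta(g_\epsilon)=\eta_{ad}$, as recorded in Eq.~\ref{vantor}. Thus it suffices to compute the adiabatic limit, as the fibre length $\epsilon^{-1/2}\to 0$, of the eta-invariant attached to $g_\epsilon=\epsilon^{-1}\kappa\otimes\kappa+\pi^{*}h$ on the total space of the orbifold circle bundle $\pi\colon X\to\Sigma$; concretely this is $\eta$ of the block $-\star_{g_\epsilon}d$ of the odd signature operator of $(X,g_\epsilon)$, which is exactly the operator whose eta appears as $\eta(-\star d)$ in \cite{m}.

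\textbf{Step 2 (Fibrewise Fourier decomposition).} Because $\xi$ is Killing for every $g_\epsilon$, $\mathcal L_\xi$ commutes with $d$ and with $\star_{g_\epsilon}$, hence with the signature operator, which is therefore block-diagonal with respect to the weight decomposition $\Omega^{\bullet}(X)=\bigoplus_{n\in\Z}\Omega^{\bullet}_n$ under the Reeb action, $\mathcal L_\xi=in$ on $\Omega^{\bullet}_n$. On the weight-zero (basic) summand the operator degenerates, as $\epsilon\to\infty$, to a signature-type first-order operator on the even-dimensional closed orbifold $\Sigma$, whose reduced eta-invariant vanishes by the standard orientation-reversing symmetry argument (the orbifold points contribute only the obvious local version of this argument). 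On the weight-$n$ summand with $n\neq 0$, one rescales by the fibre length: after conjugation by the appropriate power of $\epsilon$, the block of $-\star_{g_\epsilon}d$ becomes $\epsilon^{1/2}(cn)+B^{\mathrm{hor}}_n$, where $c$ is a fixed constant and $B^{\mathrm{hor}}_n$ is a horizontal Dirac-type operator twisted by $L^{\otimes n}$, $L\to\Sigma$ being the orbifold line bundle of the Seifert structure.

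\textbf{Step 3 (Adiabatic limit formula and identification with the contact operator).} Next I would invoke the adiabatic-limit analysis of Bismut--Cheeger and Dai (\cite{bc},\cite{dai}), adapted to orbifold circle bundles: the zeta-regularized sum over $n\neq 0$ of the block eta-invariants produces an integral over $\Sigma$ of the Bismut--Cheeger eta form (built from the curvature $d\kappa$ and the Tanaka--Webster connection) together with a local correction supported on $X$ and a "very small eigenvalue" term. The key point is that the assembled limiting horizontal operator is precisely Rumin's middle-degree contact operator: the rescaled de Rham operator on $H$-forms degenerates to $\mathcal L_\xi+d_H\star_H d_H=D^{1}$ of Eq.~\ref{Ddef}, sitting in the complex \ref{complex}, so its spectral asymmetry is exactly $\eta(-\star_H D^{1})$. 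The residual local term is the $X$-integral of the constant coefficient in the short-time heat-trace expansion of the associated hypoelliptic contact Laplacian; by the Rumin--Seshadri computation quoted in Prop.~\ref{Jdepend} (where $\hat\zeta_0(0)=\tfrac1{512}\int_X R^2\,\kappa\wedge d\kappa$) this equals $\tfrac1{512}\int_X R^2\,\kappa\wedge d\kappa$. Summing the contributions of Steps 2 and 3 gives $\eta_0(X,\kappa)=\eta(-\star_H D^{1})+\tfrac1{512}\int_X R^2\,\kappa\wedge d\kappa$. As an independent check one can run the variational route: differentiate $\eta(g_\epsilon)$ in $\epsilon$ using the local variation formula for the eta-invariant and integrate, the constant term again being the integral of the same local density.

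\textbf{Main obstacle.} The delicate points are (a) showing that the "very small eigenvalue'' contribution in Dai's formula produces no extra term here, i.e.\ that the relevant family of base operators has no spectral flow through zero beyond what the cohomology already accounts for, and (b) pinning down the exact rational coefficient $1/512$ of the local curvature term, which demands the full short-time heat-trace computation for the hypoelliptic contact Laplacian in the Heisenberg symbolic calculus and a precise matching of that expansion against the adiabatic expansion of the Riemannian heat trace. This matching is the heart of the Biquard--Herzlich--Rumin argument and the reason a sharp constant (rather than a mere topological remainder) is available; the Seifert orbifold singularities enter only through the usual Kawasaki-type local modifications and do not affect the local formula.
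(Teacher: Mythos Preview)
The paper does not prove this theorem at all: it is quoted verbatim from Biquard--Herzlich--Rumin (\cite{bhr}, Theorem~1.4) and used as a black box. There is therefore no ``paper's own proof'' to compare your proposal against; everything after the statement in \S\ref{fsec} simply \emph{applies} the result, combining it with Theorem~\ref{topthm} and Proposition~\ref{mc2} to pin down the counterterm.

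That said, your sketch is a reasonable outline of the structure of the argument in \cite{bhr}, and you have correctly located the two genuinely hard ingredients: the adiabatic-limit analysis (Bismut--Cheeger/Dai style, adapted to the Seifert orbifold setting) that exhibits $-\star_H D^1$ as the limiting operator, and the heat-trace computation in the Heisenberg calculus that produces the specific local density $\tfrac{1}{512}R^2\,\kappa\wedge d\kappa$. Your ``main obstacles'' paragraph is honest: item~(b) is exactly where the work in \cite{bhr} lies, and your Step~3 as written is more of a wish than an argument --- you invoke the Bismut--Cheeger eta form and then assert that the leftover local term ``is'' $\hat\zeta_0(0)$, but the identification of the adiabatic remainder with precisely that zeta-value is the substance of the theorem, not a consequence of Prop.~\ref{Jdepend}. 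If you intend to supply an independent proof rather than cite \cite{bhr}, that identification is where you must actually compute.
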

Theorem \ref{eta0thm} compels us to conjecture that $C_{T}=\frac{1}{512}\int_{X}R^{2}\,\,\kappa\wedge d\kappa$.  Our motivation for this comes from the fact that $\eta_{0}(X,\kappa)$ is a topological invariant in our case.  We have the following,
\begin{thm}\label{topthm}(\cite{bhr} ; cf. Remark 9.6 and Eq. 27)
If $X$ is a CR-Seifert manifold, then $\eta_{0}(X,\kappa)$ is a topological invariant and
\begin{equation}
\eta_{0}(X,\kappa)=1+\frac{d}{3}+4\sum_{j=1}^{N}s(\alpha_{j},\beta_{j}),
\end{equation}
where $d\in\Q$ is the degree of $X$ as a compact $U(1)$-orbifold bundle and
\begin{equation}
s(\alpha,\beta):=\frac{1}{4\alpha}\sum_{k=1}^{\alpha-1}cot\left(\frac{\pi k}{\alpha}\right)cot\left(\frac{\pi k\beta}{\alpha}\right)
\end{equation}
is the classical Rademacher-Dedekind sum, where $[n; (\alpha_{1},\beta_{1}),\ldots,(\alpha_{N},\beta_{N})]$ (for $(\alpha_{i},\beta_{i})=1$ relatively prime) are the Seifert invariants of $X$.
\end{thm}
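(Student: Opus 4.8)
The plan is to reduce the assertion to an \emph{adiabatic limit} of honest Riemannian eta invariants on the Seifert circle bundle $\pi\colon X\to\widehat{\Sigma}$ and then to evaluate that limit by a localization/gluing argument, in which the smooth part of $\widehat{\Sigma}$ produces the curvature term $1+\tfrac{d}{3}$ and each exceptional fibre produces a lens-space signature defect. Since $X$ is K-contact, the Tanaka--Webster torsion vanishes, so by Eq.~\ref{vantor} one has $\eta_{0}(X,\kappa)=\eta_{ad}=\lim_{\epsilon\to\infty}\eta(g_{\epsilon})$, where $g_{\epsilon}=\epsilon^{-1}\kappa\otimes\kappa+\pi^{*}h$ is precisely the family collapsing the $U(1)$-fibres. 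Thus the first step is to identify $\eta(g_{\epsilon})$ with the eta invariant of the odd signature operator $-\star d$ for $g_{\epsilon}$ and to invoke the Bismut--Cheeger--Dai theory of adiabatic limits (\cite{bc}, \cite{dai}): for a fibration with one-dimensional fibres over a closed, even-dimensional base, $\lim_{\epsilon\to\infty}\eta(g_{\epsilon})$ is a curvature integral over the base together with the fibrewise spectral contribution. Here the base integrand is a multiple of the Euler form of the orbifold circle bundle, so integrating over $\widehat{\Sigma}$ contributes the orbifold Euler number $d=n+\sum_{j}\beta_{j}/\alpha_{j}$ weighted by $\tfrac{1}{3}$, while the fibrewise ($S^{1}$) term contributes a constant; together these give $1+\tfrac{d}{3}$.

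Next I would account for the exceptional fibres. Near the $j$-th one the Seifert structure is modeled on $\bigl(D^{2}\times S^{1}\bigr)/\Z_{\alpha_{j}}$, whose link is the lens space $L(\alpha_{j},\beta_{j})$. Cutting $X$ along the tori that bound tubular neighbourhoods of the exceptional fibres and applying a gluing formula for eta invariants (Wall non-additivity and its refinements), the adiabatic limit acquires for each exceptional fibre a local defect equal to the Atiyah--Patodi--Singer signature defect of $L(\alpha_{j},\beta_{j})$; by the classical $G$-signature computation this defect is, up to the sign fixed by our orientation conventions, $4s(\alpha_{j},\beta_{j})$. Equivalently one may remain in the orbifold category and apply Kawasaki's orbifold APS theorem directly on $\widehat{\Sigma}$, in which case the quotient singularities contribute exactly these cotangent sums. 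Summing over $j$ yields $\eta_{0}(X,\kappa)=1+\tfrac{d}{3}+4\sum_{j=1}^{N}s(\alpha_{j},\beta_{j})$.

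Topological invariance then follows for free: the right-hand side involves only the Seifert invariants $[n;(\alpha_{1},\beta_{1}),\dots,(\alpha_{N},\beta_{N})]$, hence only the oriented diffeomorphism type of $X$, so $\eta_{0}(X,\kappa)$ does not depend on the auxiliary choice of compatible contact form $\kappa$ or CR structure $(H,J)$. Alternatively one can prove invariance first and directly: by (\cite{bhr};~Lemma~3.1) the variation of $\eta_{0}$ along a path of pseudohermitian structures is the integral of a local pseudohermitian invariant, and in the vanishing-torsion regime this integral is zero.

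\emph{Main obstacle.} The delicate part is the rigorous bookkeeping at the exceptional fibres: making precise how the $\Z_{\alpha_{j}}$-quotient singularities enter the adiabatic limit, controlling the non-additivity defects in the cut-and-paste of eta invariants, and pinning down all orientation and normalization conventions so that the lens-space defects appear as exactly $4s(\alpha_{j},\beta_{j})$ and not some rescaling thereof. By comparison, the reduction to the adiabatic limit (which uses only the vanishing of the Tanaka--Webster torsion) and the smooth curvature integral giving $1+\tfrac{d}{3}$ are comparatively routine given the cited adiabatic-limit theorems.
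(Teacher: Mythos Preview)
The paper does not supply its own proof of this theorem: it is stated as a citation of \cite{bhr} (their Remark~9.6 and Eq.~27), and the surrounding text immediately uses it as a black box. So there is no in-paper argument to compare against.

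That said, your sketch is a coherent and essentially correct route to the formula. The reduction $\eta_{0}(X,\kappa)=\eta_{ad}$ via vanishing Tanaka--Webster torsion is exactly Eq.~\ref{vantor} in the paper, and from there the adiabatic-limit package (Bismut--Cheeger, Dai) for the odd signature operator on an $S^{1}$-fibration over a surface does produce a curvature integral in the degree $d$ together with local contributions at the orbifold points; the latter are indeed the APS/Hirzebruch signature defects of the lens spaces $L(\alpha_{j},\beta_{j})$, which are classically $4s(\alpha_{j},\beta_{j})$ in your normalization. Your identification of the ``main obstacle'' is apt: the genuine work lies in the orbifold/excision bookkeeping and in fixing conventions so that the constants come out as $1$, $\tfrac{1}{3}$, and $4$ rather than some other rational multiples. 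One small caution: describing the base integrand as ``a multiple of the Euler form of the orbifold circle bundle'' is a bit loose---what actually appears in the adiabatic limit for the odd signature operator is a polynomial in the curvature of the connection form $\kappa$ (equivalently the first Chern class of the $U(1)$-bundle), and it is this that integrates to a linear expression in $d$; the ``$1$'' is the zero-mode/fibrewise contribution. For comparison, the argument in \cite{bhr} proceeds somewhat differently, exploiting the $U(1)$-equivariance and the CR structure to compute the relevant spectral quantities directly rather than via a cut-and-paste along the exceptional fibres; but the end formula is the same, and your adiabatic-plus-localization outline is a perfectly legitimate alternative derivation.
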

Thus, we are led to consider the natural topological invariant $e^{\frac{\pi i}{4}\left[\eta_{0}(X,\kappa)\right]}$ and how it compares with the topological invariant $e^{\pi i\left[\frac{\eta(-\star d)}{4}+\frac{1}{12}\frac{\text{CS}(A^{g})}{2\pi}\right]}$.  We consider the limit
\begin{eqnarray}
\lim_{\epsilon\rightarrow\infty}e^{\pi i\left[\frac{\eta(-\star_{\epsilon} d)}{4}+\frac{1}{12}\frac{\text{CS}(A^{g_{\epsilon}})}{2\pi}\right]}
\end{eqnarray}
where $g_{\epsilon}=\epsilon^{-1}\kappa\otimes\kappa+d\kappa(\cdot,J\cdot)$ is the natural metric associated to $X$.  On the one hand, since this is a topological invariant, and is independent of the metric, we must have
\begin{eqnarray}
\lim_{\epsilon\rightarrow\infty}e^{\pi i\left[\frac{\eta(-\star_{\epsilon} d)}{4}+\frac{1}{12}\frac{\text{CS}(A^{g_{\epsilon}})}{2\pi}\right]}=e^{\pi i\left[\frac{\eta(-\star d)}{4}+\frac{1}{12}\frac{\text{CS}(A^{g})}{2\pi}\right]}.
\end{eqnarray}
where we take $g_{1}:=g$ so that $\star_{g_{1}}:=\star$.

On the other hand, since $\eta(g_{\epsilon})=\eta(-\star_{\epsilon} d)$ by definition, and we know that its limit exists as $\epsilon\rightarrow\infty$ (in fact $\eta_{0}(X,\kappa)=\lim_{\epsilon\rightarrow\infty}\eta(g_{\epsilon})$), we have
\begin{eqnarray}
\lim_{\epsilon\rightarrow\infty}e^{\pi i\left[\frac{\eta(-\star_{\epsilon} d)}{4}+\frac{1}{12}\frac{\text{CS}(A^{g_{\epsilon}})}{2\pi}\right]}=e^{\pi i\left[\frac{\eta_{0}(X,\kappa)}{4}+\left\{\lim_{\epsilon\rightarrow\infty}\frac{1}{12}\frac{\text{CS}(A^{g_{\epsilon}})}{2\pi}\right\}\right]}.
\end{eqnarray}
Thus, we have
\begin{eqnarray}\label{topeq}
e^{\pi i\left[\frac{\eta(-\star d)}{4}+\frac{1}{12}\frac{\text{CS}(A^{g})}{2\pi}\right]}=e^{\pi i\left[\frac{\eta_{0}(X,\kappa)}{4}\right]}e^{\pi i\left\{\lim_{\epsilon\rightarrow\infty}\frac{1}{12}\frac{\text{CS}(A^{g_{\epsilon}})}{2\pi}\right\}}.
\end{eqnarray}
We therefore see that if we can understand the limit $\lim_{\epsilon\rightarrow\infty}\frac{1}{12}\frac{\text{CS}(A^{g_{\epsilon}})}{2\pi}$, we will obtain crucial information for our problem.  The following has been established using a ``Kaluza-Klein'' dimensional reduction technique modeled after the paper \cite{gijp},
\begin{prop}\label{mc2}(\cite{mcl2})
$(X,\phi,\xi,\kappa,g)$ quasi-regular K-contact three-manifold,
\begin{displaymath}
\xymatrix{\xyC{2pc}\xyR{1pc}U(1) \ar@{^{(}->}[r] & X \ar[d]\\
                              & \Sigma}.
\end{displaymath}
Let $g_{\epsilon}:=\epsilon^{-1}\,\kappa\otimes\kappa+\pi^{*}h$.  After choosing a framing for $TX\oplus TX$, corresponding to a choice of vielbeins, then,
\begin{equation}
CS(A^{g_{\epsilon}})=\left(\frac{\epsilon^{-1}}{2}\right)\int_{\Sigma}r\,\omega+\left(\frac{\epsilon^{-2}}{2}\right)\int_{\Sigma}f^{2}\,\omega
\end{equation}
where $r\in C^{\infty}_{orb}(\Sigma)$ is the (orbifold) scalar curvature of $(\Sigma,h)$, $\omega\in\Omega^{2}_{orb}(\Sigma)$ is the (orbifold) Hodge form of $(\Sigma,h)$, and $f:=\star_{h}\omega$.  In particular, the adiabatic limit of $\text{CS}(A^{g_{\epsilon}})$ vanishes:
\begin{equation}
\lim_{\epsilon\rightarrow \infty}\text{CS}(A^{g_{\epsilon}})=0.
\end{equation}
\end{prop}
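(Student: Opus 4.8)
The plan is a fibrewise Kaluza--Klein reduction along the Seifert fibration $\pi\colon X\to\Sigma$. First I would work over an orbifold chart of $\Sigma$ and introduce the adapted vielbein
\[
e^{0}:=\epsilon^{-1/2}\kappa,\qquad e^{1}:=\pi^{*}\bar e^{1},\qquad e^{2}:=\pi^{*}\bar e^{2},
\]
where $\{\bar e^{1},\bar e^{2}\}$ is a local orthonormal coframe for $(\Sigma,h)$; this is $g_{\epsilon}$-orthonormal, and since $d\kappa=\pi^{*}\omega$ with $\omega=f\,\mathrm{vol}_{h}$ one gets $de^{0}=\epsilon^{-1/2}f\,e^{1}\wedge e^{2}$ and $de^{i}=-\pi^{*}\bar\omega^{i}{}_{j}\wedge e^{j}$. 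Solving Cartan's first structure equations for the Levi-Civita connection of $g_{\epsilon}$ (torsion-freeness together with skew-symmetry) then gives, writing $A:=\omega^{0}{}_{1}$, $B:=\omega^{0}{}_{2}$, $C:=\omega^{1}{}_{2}$,
\[
A=\tfrac12\epsilon^{-1/2}f\,e^{2},\qquad B=-\tfrac12\epsilon^{-1/2}f\,e^{1},\qquad C=\pi^{*}\bar\omega^{1}{}_{2}-\tfrac12\epsilon^{-1/2}f\,e^{0},
\]
so that the entire $\epsilon$-dependence of $A^{g_{\epsilon}}$ becomes explicit and enters only through $\epsilon^{-1/2}$.

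Next I would substitute this into the gravitational Chern--Simons density $\mathrm{Tr}\big(A^{g_{\epsilon}}\wedge dA^{g_{\epsilon}}+\tfrac23(A^{g_{\epsilon}})^{3}\big)$, which for an $\mathfrak{so}(3)$-connection is, up to universal constants, a sum of $A\wedge dA+B\wedge dB+C\wedge dC$ and $A\wedge B\wedge C$. The fibre integration is governed by two elementary observations: a $3$-form on $X$ with no $e^{0}$ factor is a pullback from the surface $\Sigma$ and hence integrates to $0$ over the fibres, while $\int_{X}e^{0}\wedge\pi^{*}\beta$ equals $\epsilon^{-1/2}$ times a fixed (orbifold) multiple of $\int_{\Sigma}\beta$. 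From the structure equations one checks $A\wedge dA=B\wedge dB=0$, so only $C\wedge dC$ and $A\wedge B\wedge C$ can contribute; expanding
\[
dC=\Big(\tfrac{r}{2}-\tfrac12\epsilon^{-1}f^{2}\Big)e^{1}\wedge e^{2}-\tfrac12\epsilon^{-1/2}\,df\wedge e^{0}
\]
(with $d\bar\omega^{1}{}_{2}=\tfrac{r}{2}\mathrm{vol}_{h}$ the curvature $2$-form of $h$), keeping only the $e^{0}$-terms, and using Stokes on the closed orbifold $\Sigma$ to re-express $\int_{\Sigma}\bar\omega^{1}{}_{2}\wedge df$ in terms of $\int_{\Sigma}f\,d\bar\omega^{1}{}_{2}$, one finds that exactly two powers of $\epsilon$ survive: the $\epsilon^{-1}$ part of $C\wedge dC$ assembles into a multiple of $\int_{\Sigma}fr\,\mathrm{vol}_{h}=\int_{\Sigma}r\,\omega$, while the $\epsilon^{-2}$ parts of $C\wedge dC$ and $A\wedge B\wedge C$ combine into a multiple of $\int_{\Sigma}f^{3}\,\mathrm{vol}_{h}=\int_{\Sigma}f^{2}\,\omega$; no $\epsilon^{0}$ term and no positive powers appear. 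Fixing the remaining normalisation constant---the fibre period of the connection form $\kappa$---yields the stated identity.

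The step I expect to be the real obstacle is making this globally precise. The Chern--Simons number $CS(A^{g_{\epsilon}})$ of the Levi-Civita connection is only defined once a trivialisation is fixed---in the relevant anomaly-free normalisation, a $2$-framing of $TX\oplus TX$ in the sense of Atiyah---and the adapted vielbein above exists only chart-by-chart when $\Sigma$ has nonzero orbifold Euler characteristic, so $\bar\omega^{1}{}_{2}$ is not a global $1$-form; this is exactly why the statement carries the hypothesis ``after choosing a framing for $TX\oplus TX$, corresponding to a choice of vielbeins''. I would address this following \cite{gijp}: run the reduction on the (always trivial) bundle $TX\oplus TX$ with the $2$-framing induced by the vielbein, observe that the a priori frame-dependent contributions reorganise---via the Stokes manipulation above---into integrals of the globally defined curvature $d\bar\omega^{1}{}_{2}$ and of $f$, hence are unambiguous, and treat the finitely many Seifert singular fibres through the orbifold charts so that the $\int_{\Sigma}$ become the expected rational-valued orbifold integrals. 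Granting this, the final assertion is immediate: both surviving terms are $O(\epsilon^{-1})$, so $\lim_{\epsilon\to\infty}CS(A^{g_{\epsilon}})=0$.
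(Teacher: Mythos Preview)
The paper does not actually prove this proposition; it is quoted from the companion reference \cite{mcl2}, with only the remark that the argument is ``a `Kaluza--Klein' dimensional reduction technique modeled after the paper \cite{gijp}.'' Your proposal follows precisely that route: you build the adapted $g_{\epsilon}$-orthonormal coframe, solve Cartan's first structure equations to get the Levi--Civita connection with explicit $\epsilon^{-1/2}$ dependence, and then fibre-integrate the Chern--Simons $3$-form, discarding the basic pieces and keeping the $e^{0}$-terms. The connection forms you write down are correct, your expression for $dC$ is correct, and the bookkeeping that only $\epsilon^{-1}$ and $\epsilon^{-2}$ survive is right. You have also correctly identified the genuine subtlety---that the vielbein and hence $\bar\omega^{1}{}_{2}$ are only local, so the equality is tied to a choice of $2$-framing and the orbifold points must be handled chart-by-chart---which is exactly the caveat the statement carries. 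In short, your outline is the intended argument and there is nothing to compare it against within this paper itself.
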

Proposition \ref{mc2} combined with Eq. \ref{topeq} and Theorem \ref{eta0thm} gives us the following,
\begin{prop}\label{lprop}
$(X,\phi,\xi,\kappa,g)$ closed, quasi-regular K-contact three-manifold.  The counterterm, $C_{T}$, such that $e^{\frac{\pi i}{4}\left[\eta(-\star_{H} D^{1})+C_{T}\right]}$ is a topological invariant that is identically equal to the topological invariant $e^{\pi i\left[\frac{\eta(-\star d)}{4}+\frac{1}{12}\frac{\text{CS}(A^{g})}{2\pi}\right]}$ is $$C_{T}=\frac{1}{512}\int_{X}R^{2}\,\,\kappa\wedge d\kappa.$$
\end{prop}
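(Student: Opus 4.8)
The plan is to assemble \textbf{Proposition \ref{lprop}} purely by chaining together the three results already in hand, so the proof is essentially bookkeeping of the two framing-independent quantities. First I would recall the conjectural identification of Eq.~\ref{etavar}, which sets the defining property that $C_{T}$ must satisfy: the quantity $e^{\frac{\pi i}{4}[\eta(-\star_{H}D^{1})+C_{T}]}$ should be a topological invariant identically equal to $e^{\pi i[\frac{\eta(-\star d)}{4}+\frac{1}{12}\frac{\mathrm{CS}(A^{g})}{2\pi}]}$. So the entire task is to pin down a $C_{T}$ making both halves of this equality hold simultaneously.

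Next I would invoke the adiabatic-limit argument already carried out in the excerpt around Eq.~\ref{topeq}. Using that $g_{\epsilon}=\epsilon^{-1}\kappa\otimes\kappa+d\kappa(\cdot,J\cdot)$ is a family of metrics on the fixed manifold $X$, the left side $e^{\pi i[\frac{\eta(-\star d)}{4}+\frac{1}{12}\frac{\mathrm{CS}(A^{g})}{2\pi}]}$ is metric-independent (after fixing the canonical $2$-framing of \cite{at}), hence equal to its own $\epsilon\to\infty$ limit. On the other side, $\eta(-\star_{\epsilon}d)=\eta(g_{\epsilon})$ converges to the renormalized eta-invariant $\eta_{0}(X,\kappa)$ by Eq.~\ref{vantor} (valid here since K-contact forces the Tanaka-Webster torsion to vanish), and the gravitational Chern-Simons term $\mathrm{CS}(A^{g_{\epsilon}})$ has vanishing adiabatic limit by Proposition \ref{mc2}. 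Combining these gives Eq.~\ref{topeq}:
\begin{equation*}
e^{\pi i\left[\frac{\eta(-\star d)}{4}+\frac{1}{12}\frac{\mathrm{CS}(A^{g})}{2\pi}\right]}=e^{\pi i\left[\frac{\eta_{0}(X,\kappa)}{4}\right]}\cdot e^{0}=e^{\frac{\pi i}{4}\eta_{0}(X,\kappa)}.
\end{equation*}

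Then I would apply Theorem \ref{eta0thm}, which on a CR-Seifert manifold identifies $\eta_{0}(X,\kappa)=\eta(-\star_{H}D^{1})+\frac{1}{512}\int_{X}R^{2}\,\kappa\wedge d\kappa$. Substituting this into the previous display yields
\begin{equation*}
e^{\pi i\left[\frac{\eta(-\star d)}{4}+\frac{1}{12}\frac{\mathrm{CS}(A^{g})}{2\pi}\right]}=e^{\frac{\pi i}{4}\left[\eta(-\star_{H}D^{1})+\frac{1}{512}\int_{X}R^{2}\,\kappa\wedge d\kappa\right]},
\end{equation*}
which is precisely Eq.~\ref{etavar} with $C_{T}=\frac{1}{512}\int_{X}R^{2}\,\kappa\wedge d\kappa$; and topological invariance of the right side is exactly the topological invariance of $\eta_{0}(X,\kappa)$ asserted in Theorem \ref{topthm}. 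This establishes both required properties of $C_{T}$ and, moreover, shows uniqueness in the sense that $C_{T}$ is forced modulo the $8\Z$-ambiguity inherent in an expression of the form $e^{\pi i(\cdot)/4}$.

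The conceptual content here is really all front-loaded into the inputs; the main obstacle is not in the present proof but in justifying that Proposition \ref{mc2}'s framing choice (the vielbein framing adapted to the Seifert fibration) is compatible with the canonical $2$-framing of \cite{at} used to trivialize the left-hand side, so that the two ``topological invariant'' claims refer to genuinely the same normalization. I would address this by noting that any two framings differ by an element of $\pi_{3}(SO(3))\cong\Z$, which shifts $\mathrm{CS}(A^{g})$ by an integer and hence alters $e^{\pi i \mathrm{CS}(A^g)/(24\pi)}$ only by a computable root of unity; absorbing this discrepancy into the definition of the invariant (as already flagged in the footnote about the canonical framing) makes the identification exact. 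The remaining steps — vanishing of Tanaka-Webster torsion for K-contact structures, and the passage to the $\epsilon\to\infty$ limit inside the exponential — are routine given the cited results.
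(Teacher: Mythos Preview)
Your proposal is correct and follows essentially the same route as the paper: the paper states that Proposition~\ref{lprop} is obtained by combining Proposition~\ref{mc2} (vanishing of the adiabatic limit of $\mathrm{CS}(A^{g_{\epsilon}})$), Eq.~\ref{topeq} (metric-independence of the regularized $\eta$-expression, hence equality with its adiabatic limit), and Theorem~\ref{eta0thm} (the identification $\eta_{0}(X,\kappa)=\eta(-\star_{H}D^{1})+\frac{1}{512}\int_{X}R^{2}\,\kappa\wedge d\kappa$), which is exactly the chain you wrote out. Your additional remark about compatibility of the vielbein framing in Proposition~\ref{mc2} with the canonical $2$-framing is a legitimate caveat that the paper does not spell out in the proof itself, but it does not affect the correctness of your argument.
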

Given Proposition \ref{lprop} and Theorem \ref{topthm}, we conclude the following as an immediate consequence,
\begin{prop}\label{cprop}
$(X,\phi,\xi,\kappa,g)$ closed, quasi-regular K-contact three-manifold.  Then,
\begin{eqnarray*}
\eta(-\star d)+\frac{1}{3}\frac{\text{CS}(A^{g})}{2\pi}&=&\eta(-\star D)+\frac{1}{512}\int_{X}R^{2}\,\,\kappa\wedge d\kappa\\
                                                       &=&1+\frac{d}{3}+4\sum_{j=1}^{N}s(\alpha_{j},\beta_{j}),
\end{eqnarray*}
where $d=c_1(X)=n+\sum_{j=1}^{N}\frac{\beta_{j}}{\alpha_j}\in\Q$ and
\begin{equation*}
s(\alpha,\beta):=\frac{1}{4\alpha}\sum_{k=1}^{\alpha-1}cot\left(\frac{\pi k}{\alpha}\right)cot\left(\frac{\pi k\beta}{\alpha}\right)\in\Q
\end{equation*}
is the classical Rademacher-Dedekind sum, where $[n; (\alpha_{1},\beta_{1}),\ldots,(\alpha_{N},\beta_{N})]$ (for gcd$(\alpha_{j},\beta_{j})=1$) are the Seifert invariants of $X$.  In particular, we have computed the $U(1)$-Chern-Simons partition function as:
\begin{eqnarray*}
Z_{U(1)}(X,p,k)&=&k^{n_X}e^{\pi i k S_{X,P}(A_{0})}e^{\frac{\pi i}{4}\left(1+\frac{d}{3}+4\sum_{j=1}^{N}s(\alpha_{j},\beta_{j})\right)}\int_{\mathcal{M}_{P}}\,\, (T^{d}_{C})^{1/2},\\
               &=&k^{m_X}e^{\pi i k S_{X,P}(A_{0})}e^{\frac{\pi i}{4}\left(1+\frac{d}{3}+4\sum_{j=1}^{N}s(\alpha_{j},\beta_{j})\right)}\int_{\mathcal{M}_{P}}\,\, (T^{d}_{RS})^{1/2}.
\end{eqnarray*}

\end{prop}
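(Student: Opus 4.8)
The plan is to obtain Proposition~\ref{cprop} by chaining together results already established rather than by any new computation: Proposition~\ref{mprop} (equivalently Proposition~\ref{lprop}) supplies the first equality in the display, Theorem~\ref{eta0thm} together with Theorem~\ref{topthm} supplies the second, and substituting the resulting common value into Eq.~\ref{nz} and Eq.~\ref{oz} yields the two closed formulas for $Z_{U(1)}(X,p,k)$. The one preliminary point to settle is that $\eta(-\star D)=\eta(-\star_{H}D^{1})$. By Proposition~\ref{prop1} both $-\star D$ on $\Omega^{1}(X)$ and $-\star_{H}D^{1}$ on $\Omega^{1}(H)$ represent the single real quadratic form $S(A)$; a direct check in an adapted coframe (using $\star\beta=\kappa\wedge\star_{H}\beta$ on horizontal forms, Section~\ref{Dsec}, and $D\alpha=\kappa\wedge D^{1}\alpha$) shows $-\star D=-\star_{H}D^{1}$ on $\Omega^{1}(H)$, so the two operators have the same nonzero eigenvalues with the same signs and multiplicities, and hence the same $\eta$-function — which is exactly the regularized signature entering Eq.~\ref{intzeta}.

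With this identification the middle expression in the display is $\eta(-\star_{H}D^{1})+\frac{1}{512}\int_{X}R^{2}\,\kappa\wedge d\kappa$, which by Theorem~\ref{eta0thm} equals the renormalized invariant $\eta_{0}(X,\kappa)$; Theorem~\ref{topthm} then evaluates $\eta_{0}(X,\kappa)=1+\frac{d}{3}+4\sum_{j=1}^{N}s(\alpha_{j},\beta_{j})$, which gives the second equality. For the first equality I would invoke the adiabatic-limit identity Eq.~\ref{topeq}: once the canonical $2$-framing of \cite{at} is fixed, $\frac14\eta(-\star d)+\frac{1}{12}\frac{\text{CS}(A^{g})}{2\pi}$ is a topological invariant, hence equal to its value along the contact family $g_{\epsilon}=\epsilon^{-1}\kappa\otimes\kappa+d\kappa(\cdot,J\cdot)$ in the limit $\epsilon\to\infty$, namely $\frac14\eta_{0}(X,\kappa)+\lim_{\epsilon\to\infty}\frac{1}{12}\frac{\text{CS}(A^{g_{\epsilon}})}{2\pi}$; by Proposition~\ref{mc2} the last limit is $0$, so $\eta(-\star d)+\frac13\frac{\text{CS}(A^{g})}{2\pi}=\eta_{0}(X,\kappa)=\eta(-\star D)+\frac{1}{512}\int_{X}R^{2}\,\kappa\wedge d\kappa$. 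This is precisely the assertion of Proposition~\ref{mprop}/Proposition~\ref{lprop}, so no new input is needed.

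To finish, I would substitute $\eta(-\star_{H}D^{1})+\frac{1}{512}\int_{X}R^{2}\,\kappa\wedge d\kappa=1+\frac{d}{3}+4\sum_{j}s(\alpha_{j},\beta_{j})$ into Eq.~\ref{nz} for $\bar Z_{U(1)}(X,p,k)$, and the first equality above into Eq.~\ref{oz} for $Z_{U(1)}(X,p,k)$; the two expressions then agree term by term after using $(T^{d}_{C})^{1/2}=(T^{d}_{RS})^{1/2}$ (\cite{rs}, Theorem~4.2, recalled in Section~\ref{tsec}) and $n_{X}=m_{X}$ (Remark~\ref{rmknX}), consistently with Proposition~\ref{mprop}. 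The genuinely delicate step — everything else being bookkeeping — is that Eq.~\ref{topeq} a priori determines these $\eta$-type quantities only modulo $8\Z$, since the partition function sees them solely through the phase $e^{\frac{\pi i}{4}(\,\cdot\,)}$; to state the displayed equalities on the nose one must control the integer and $2$-framing ambiguities, and it is exactly the canonical framing of \cite{at} together with the vanishing of the adiabatic limit in Proposition~\ref{mc2} that removes them.
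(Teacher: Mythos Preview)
Your proposal is correct and follows essentially the same route as the paper: the paper states Proposition~\ref{cprop} as an ``immediate consequence'' of Proposition~\ref{lprop} and Theorem~\ref{topthm}, and you have simply unpacked that chain (Theorem~\ref{eta0thm} for the identification with $\eta_{0}(X,\kappa)$, Eq.~\ref{topeq} with Proposition~\ref{mc2} for the adiabatic limit, and Theorem~\ref{topthm} for the explicit Dedekind-sum formula), together with the substitution into Eq.~\ref{nz} and Eq.~\ref{oz}. Your closing caveat about the $8\Z$ ambiguity is a fair observation --- the paper writes Eq.~\ref{topeq} at the level of exponentials, but the argument preceding it actually uses metric-independence of the real number $\tfrac{1}{4}\eta(-\star d)+\tfrac{1}{12}\tfrac{\mathrm{CS}(A^{g})}{2\pi}$ (once the canonical $2$-framing is fixed), so the displayed equalities do hold on the nose rather than merely modulo $8\Z$.
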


\appendix

\section{Basic construction of $U(1)$-Chern-Simons theory} \label{appen1}

Let $X$ be a closed oriented $3$-manifold.  For any $U(1)$-connection $A\in\mathcal{A}_{P}$, \cite{m} defined an induced $SU(2)$-connection $\hat{A}$ on an associated principal $SU(2)$-bundle $\hat{P}=P\times_{U(1)} SU(2)$.  i.e. $$\hat{A}|_{[p,g]}=Ad_{g^{-1}}(\rho_{*}pr_{1}^{*}A|_{p})+pr_{2}^{*}\vartheta_{g}$$ where $\rho:U(1)\rightarrow SU(2)$ is the diagonal inclusion, $pr_{1}:P\times SU(2)\rightarrow P$ and $pr_{2}:P\times SU(2)\rightarrow SU(2)$.  Since for any $3$-manifold $X$, $\hat{P}$ is trivializable, let $\hat{s}:X\rightarrow \hat{P}$ be a global section.  The definition we use for the Chern-Simons action is as follows:
\begin{define}
The Chern-Simons action functional of a $U(1)$-connection $A\in\mathcal{A}_{P}$ is defined by:
\begin{equation}\label{act}
S_{X,P}(A)=\int_{X}\hat{s}^{*}\alpha(\hat{A})\,\,\,(mod\,\, \Z)
\end{equation}
where $\alpha(\hat{A})\in\Omega^{3}(\hat{P},\R)$ is the Chern-Simons form of the induced $SU(2)$-connection $\hat{A}\in\mathcal{A}_{\hat{P}}$,
\begin{equation}
\alpha(\hat{A})=Tr(\hat{A}\wedge F_{\hat{A}})-\frac{1}{6}Tr(\hat{A}\wedge [\hat{A},\hat{A}])
\end{equation}
\end{define}
We then define the partition function for $U(1)$-Chern-Simons theory as (as in \cite{m}, \cite{mpr}):
\begin{equation}
Z_{U(1)}(X,k)=\sum_{p\in\text{Tors}H^{2}(X;\Z)}Z_{U(1)}(X,p,k)
\end{equation}
where,
\begin{equation}
Z_{U(1)}(X,p,k)=\frac{1}{Vol(\mathcal{G}_{P})}\int_{\mathcal{A}_{P}}\mathcal{D}A e^{\pi i k S_{X,P}(A)}
\end{equation}
and
\begin{equation}
S_{X,P}(A)=\int_{X}\hat{s}^{*}\alpha(\hat{A})
\end{equation}
Then for any principal $U(1)$-bundle $P$ we follow \cite{bw} and define a new action
\begin{equation}
S_{X,P}(A,\Phi):=S_{X,P}(A-\kappa\Phi)
\end{equation}
where we may view $\Phi\in \Omega^{0}(X)$ and,
\begin{eqnarray}
S_{X,P}(A,\Phi)&=&\int_{X}\alpha(\widehat{A-\kappa\Phi})\\
                     &=&\int_{X}\alpha(\hat{A}-\kappa\hat{\Phi})\\
                     &=&S_{X,P}(A)-\int_{X}[2\kappa\wedge Tr (\hat{\Phi}\wedge F_{\hat{A}})-\kappa\wedge d\kappa\,\, Tr (\hat{\Phi}^{2})]
\end{eqnarray}
where the second equality follows from the definition of $\hat{A}$ and $\hat{\Phi}$ (where $\hat{\Phi}|_{[p,g]}=Ad_{g^{-1}}(\rho_{*}pr_{1}^{*}\Phi|_{p})$) on $\hat{P}=P\times_{U(1)} SU(2)$.  The third equality follows from Eq. 3.6 of \cite{bw}.  We then define a new partition function
\begin{equation}
\bar{Z}_{U(1)}(X,p,k):=\frac{1}{Vol(S)}\frac{1}{Vol(\mathcal{G}_{P})}\int_{\mathcal{A}(P)}DA\,D\Phi\,\, e^{\pi i k S_{X,P}(A,\Phi)}
\end{equation}
where $D\Phi$ is defined by the invariant, positive definite quadratic form,
\begin{equation}\label{phiprod}
(\Phi,\Phi)=-\frac{1}{4\pi^{2}}\int_{X}\Phi^{2}\kappa\wedge d\kappa
\end{equation}
As observed in \cite{bw}, our new partition function is identically equal to our original partition function defined for $U(1)$-Chern-Simons theory.  On the one hand, we can fix $\Phi=0$ above using the shift symmetry, $\delta\Phi=\sigma$, which will cancel the pre-factor $Vol(S)$ from the resulting group integral over $S$ and yield exactly our original partition function:
\begin{equation*}
Z_{U(1)}(X,p,k)=\frac{1}{Vol(\mathcal{G}_{P})}\int_{\mathcal{A}_{P}}\mathcal{D}A\,\, e^{\pi i k S_{X,P}(A)}
\end{equation*}
Thus, we obtain the heuristic result,
\begin{equation}\label{equivpart}
\bar{Z}_{U(1)}(X,p,k)=Z_{U(1)}(X,p,k).
\end{equation}
On the other hand, we obtain another description of $\bar{Z}_{U(1)}(X,p,k)$ by integrating $\Phi$ out.  We will briefly review this computation here.  Our starting point is the formula for the shifted partition function
\begin{equation}
\bar{Z}_{U(1)}(X,p,k)=\frac{1}{Vol(S)}\frac{1}{Vol(\mathcal{G}_{P})}\int_{\mathcal{A}(P)}DA\,D\Phi\,\, e^{\pi i k S_{X,P}(A,\Phi)}
\end{equation}
where
\begin{equation}
S_{X,P}(A,\Phi)=S_{X,P}(A)-\int_{X}[2\kappa\wedge Tr (\hat{\Phi}\wedge F_{\hat{A}})-\kappa\wedge d\kappa\,\, Tr (\hat{\Phi}^{2})]
\end{equation}
We formally complete the square with respect to $\hat{\Phi}$ as follows:
\begin{eqnarray*}
\int_{X}[\kappa\wedge d\kappa\,\, Tr (\hat{\Phi}^{2})&-&2\kappa\wedge Tr (\hat{\Phi}\wedge F_{\hat{A}})]\\
&=&\int_{X}\left[Tr (\hat{\Phi}^{2})-\frac{2\kappa\wedge Tr (\hat{\Phi}\wedge F_{\hat{A}})}{\kappa\wedge d\kappa}\right]\kappa\wedge d\kappa\\
                                                                                                      &=&\int_{X}Tr \left(\hat{\Phi}^{2}-\frac{2\kappa\wedge
                                                                                                      F_{\hat{A}}}{\kappa\wedge d\kappa}\hat{\Phi}\right)\kappa\wedge d\kappa\\
                                                                                                      &=&\int_{X}Tr \left(\left[\hat{\Phi}-\frac{\kappa\wedge F_{\hat{A}}}{\kappa\wedge d\kappa}\right]^{2}-\left[\frac{\kappa\wedge F_{\hat{A}}}{\kappa\wedge d\kappa}\right]^{2}\right)\kappa\wedge d\kappa
\end{eqnarray*}
We then only need to compute the Gaussian
\begin{eqnarray*}
\int D\Phi\,\,&\text{exp}&\left[ \pi i k\,\int_{X} \text{Tr} \left(\left[\hat{\Phi}-\frac{\kappa\wedge F_{\hat{A}}}{\kappa\wedge d\kappa}\right]^{2}\right)\kappa\wedge d\kappa\right]\\
            &=&\int D\Phi\,\text{exp}\left[ \pi i k\,\, \int_{X} Tr (\hat{\Phi}^{2})\kappa\wedge d\kappa\right]\\
            &=&\int D\Phi\,\text{exp}\left[ \frac{i k}{4\pi}\,\, \int_{X}\Phi^{2}\kappa\wedge d\kappa\right]\\
            &=&\int D\Phi\,\text{exp}\left[ -\frac{1}{2}\,\, (\Phi, A\Phi) \right]
\end{eqnarray*}
where we take $A=2\pi i k\I$ acting on the space of fields $\Phi$ and the inner product $(\Phi,\Phi)$ is defined as in Eq. \ref{phiprod}.  We then formally get
\begin{eqnarray}
\int D\Phi\,\text{exp}\left[ -\frac{1}{2}\,\, (\Phi, A\Phi) \right]&=&\sqrt{\frac{(2\pi)^{\Delta\mathcal{G}}}{\text{det}A}}\\\label{dimterm}
                                                                   &=&\left(\frac{-i}{k}\right)^{\Delta\mathcal{G}/2}
\end{eqnarray}
where the quantity $\Delta\mathcal{G}$ is formally the dimension of the gauge group $\mathcal{G}$.
Note that we have abused notation slightly throughout by writing $\frac{1}{\kappa\wedge d\kappa}$.  We have done this with the understanding that since $\kappa\wedge d\kappa$ is non-vanishing, then $\kappa\wedge F_{\hat{A}}=\phi\,\kappa\wedge d\kappa$ for some function $\phi\in 2\pi i \Omega^{0}(X)$, and we identify $\frac{\kappa\wedge F_{\hat{A}}}{\kappa\wedge d\kappa}:=\phi$.\\
\\
Our new description of the partition function is now,
\begin{equation}\label{part}
\bar{Z}_{U(1)}(X,p,k)=C\int_{\mathcal{A}_{P}}DA\,\, exp\,\left[\pi i k\left(S_{X,P}(A)-\int_{X} \frac{Tr[(\kappa\wedge F_{\hat{A}})^{2}]}{\kappa\wedge d\kappa}\right)\right]
\end{equation}
where $C=\frac{1}{Vol(S)}\frac{1}{Vol(\mathcal{G}_{P})}\left(\frac{-i}{k}\right)^{\Delta\mathcal{G}/2}$.  We may rewrite this partition function after choosing a flat base point $A_{0}$ in $\mathcal{A}_{P}$ so that $F_{A_{0}}=0$ and identifing $\mathcal{A}(P)=A_{0}+2\pi i \Omega^{1}(X)$.  We then obtain
\begin{equation}\label{Anom1a}
\bar{Z}_{U(1)}(X,p,k)=C_1\int_{\mathcal{A}_{P}}DA\,\, exp\,\left[\frac{i k}{4\pi}\left(\int_{X} A\wedge dA-\int_{X} \frac{(\kappa\wedge dA)^{2}}{\kappa\wedge d\kappa}\right)\right]
\end{equation}
where $$C_1 = \frac{e^{\pi i k S_{X,P}(A_{0})}}{Vol(\mathcal{S})Vol(\mathcal{G}_{P})} \left(\frac{-i}{k}\right)^{\Delta\mathcal{G}/2}.$$
\noindent
We may further simplify Eq. \ref{Anom1a} by reducing $\mathcal{A}_{P}$ to its quotient under the shift symmetry $\bar{\mathcal{A}}_{P}:=\mathcal{A}_{P}/\mathcal{S}$, effectively canceling the factor of $Vol(\mathcal{S})$ out front of the integral. We obtain:
\begin{equation}\label{Anom2a}
\bar{Z}_{U(1)}(X,p,k) =
C_2 \int_{\bar{\mathcal{A}}_{P}}\bar{D}A\,\, exp\,\left[\frac{i k}{4\pi}
\left(\int_{X} A\wedge dA-\int_{X} \frac{(\kappa\wedge dA)^{2}}{\kappa\wedge d\kappa}\right)\right]\\
\end{equation}
where $C_2 = C_1 Vol (S) $.

Note that we are justified in excluding the factor $\left(\frac{-i}{k}\right)^{\Delta\mathcal{G}/2}$ from Eq. \ref{Anom1} since this factor would cancel in the stationary phase approximation in any case.

\section{Framing dependence and the gravitational Chern-Simons term} \label{appen2}

As observed in (\cite{m} ; Eq. 7.17), Eq. \ref{abelchern2} can also be rigorously defined by setting
\begin{equation}
Z_{U(1)}(X,p,k)=\frac{e^{\pi i k S_{X,P}(A_{P})}}{\text{Vol}U(1)}\int_{\mathcal{M}_{P}}e^{\frac{\pi i}{4} \text{sgn}(-\star d)}\frac{[\text{det}'(d^{*}d)]^{1/2}]}{[\text{det}'(-k\star d)]^{1/2}]}\nu
\end{equation}
where $\nu$ is the metric induced on the moduli space of flat connections on $P$, $\mathcal{M}_{P}$. This last expression has rigorous mathematical meaning if the determinants and signatures of the operators are regularized.  The signature of the operator $-\star d$ on $\Omega^{1}(X;\R)$ is regularized via the eta invariant, so that $\text{sgn}(-\star d)=\eta(-\star d)+\frac{1}{3}\frac{\text{CS}(A^{g})}{2\pi}$, where
\begin{equation}
\eta(-\star d)=\lim_{s\rightarrow 0}\sum_{\lambda_{j}\neq 0}sign\lambda_{j}|\lambda_{j}|^{-s}
\end{equation}
and $\lambda_{j}$ are the eigenvalues of $-\star d$, and
\begin{equation}
\text{CS}(A^{g})=\frac{1}{4\pi}\int_{X}Tr(A^{g}\wedge dA^{g}+\frac{2}{3} A^{g}\wedge A^{g}\wedge A^{g})
\end{equation}
is the gravitational Chern-Simons term, with $A^{g}$ the Levi-Civita connection on the spin bundle of $X$.  The determinants are regularized as in Remark 7.6 of \cite{m}.\\
It is straightforward to see that the the term inside of the integral
\begin{equation}
\frac{1}{\text{Vol}U(1)}\frac{[\text{det}'(d^{*}d)]^{1/2}]}{[\text{det}'(-k\star d)]^{1/2}]}
\end{equation}
may be identified with the Reidemeister torsion of the 3-manifold $X$, $T^{d}_{RS}$ (cf. \cite{m} ; Eq. 7.22).  We obtain,
\begin{equation}
Z_{U(1)}(X,p,k)=k^{m_X}e^{\pi i k S_{X,P}(A_{P})}e^{\pi i\left(\frac{\eta(-\star d)}{4}+\frac{1}{12}\frac{\text{CS}(A^{g})}{2\pi}\right)}\int_{\mathcal{M}_{P}}(T^{d}_{RS})^{1/2}
\end{equation}
where $m_X=\frac{1}{2}(\text{dim}H^{1}(X;\R)-\text{dim}H^{0}(X;\R))$.
The Atiyah-Patodi-Singer theorem says that the combination
\begin{equation}
\frac{\eta(-\star d)}{4}+\frac{1}{12}\frac{\text{CS}(A^{g})}{2\pi}
\end{equation}
is a topological invariant depending only on a $2$-framing of $X$.  Recall (\cite{at}) that a~2-framing is choice of a homotopy equivalence class $\pi$ of trivializations of $TX\oplus TX$, twice the tangent bundle of $X$ viewed as a Spin$(6)$ bundle.  The possible $2$-framings correspond to $\Z$.  The identification with $\Z$ is given by the signature defect defined by
\begin{equation}
\delta(X,\pi)=\text{sign}(M)-\frac{1}{6}p_{1}(2TX,\pi)
\end{equation}
where $M$ is a $4$-manifold with boundary $X$ and $p_{1}(2TX,\pi)$ is the relative Pontrjagin number associated to the framing $\pi$ of the bundle $TX\oplus TX$.  The canonical $2$-framing $\pi^{c}$ corresponds to $\delta(X,\pi^{c})=0$.  Either we can choose the canonical framing, and work with this throughout, or we can observe that if the framing of $X$ is twisted by $s$ units, then $CS(A^{g})$ transforms by
\begin{equation}
CS(A^{g})\rightarrow CS(A^{g})+2\pi s
\end{equation}
and so the partition function $Z_{U(1)}(X,k)$ is transformed by
\begin{equation}\label{partform}
Z_{U(1)}(X,k)\rightarrow Z_{U(1)}(X,k)\cdot \text{exp}\left(\frac{2\pi i s}{24}\right)
\end{equation}
Then $Z_{U(1)}(X,k)$ is a topological invariant of framed, oriented $3$-manifolds, with a transformation law under change of framing.  This is tantamount to a topological invariant of oriented $3$-manifolds without a choice of framing.




\bibliographystyle{amsalpha}
\bibliography{finalthesis}

\providecommand{\bysame}{\leavevmode\hbox to3em{\hrulefill}\thinspace}
\providecommand{\MR}{\relax\ifhmode\unskip\space\fi MR }
\providecommand{\MRhref}[2]{%
  \href{http://www.ams.org/mathscinet-getitem?mr=#1}{#2}
}
\providecommand{\href}[2]{#2}
\begin{thebibliography}{GIJP03}

\bibitem[Ati90]{at}
M.~F. Atiyah, \emph{On framings of 3-manifolds}, Topolgy \textbf{29} (1990),
  no.~1, 1--7.

\bibitem[BC89]{bc}
J.M. Bismut and J.~Cheeger, \emph{{$\eta$}-invariants and their adiabatic
  limits}, J. Amer. Math. Soc. \textbf{2} (1989), 33--70.

\bibitem[BG08]{bg}
Charles~P. Boyer and Krzysztof Galicki, \emph{Sasakian geometry}, Oxford
  University Press, 2008.

\bibitem[BHR07]{bhr}
Olivier Biquard, Marc Herzlich, and Michel Rumin, \emph{Diabatic limit, eta
  invariants and {C}auchy-{R}iemann manifolds of dimension 3}, Ann. Scient. Ec.
  Norm. Sup. \textbf{40} (2007), 589--631.

\bibitem[Bla76]{b}
David~E. Blair, \emph{Riemannian geometry of contact and symplectic manifolds},
  Lecture notes in mathematics, vol. 509, Springer-Verlag, Berlin, 1976.

\bibitem[BW05]{bw}
Chris Beasley and Edward Witten, \emph{Non-abelian localization for
  {C}hern-{S}imons theory}, J. Differential Geom. \textbf{70} (2005), 183--323.

\bibitem[Dai91]{dai}
X.~Dai, \emph{Adiabatic limits, nonmultiplicativity of signature, and {L}eray
  spectral sequence}, J. Amer. Math. Soc. \textbf{4} (1991), 265--321.

\bibitem[GIJP03]{gijp}
G.~Guralnik, A.~Iorio, R.~Jackiw, and S.-Y. Pi, \emph{Dimensionally reduced
  gravitational {C}hern-{S}imons term and its kink}, Ann. Physics \textbf{308}
  (2003), no.~1, 222--236.

\bibitem[GS77]{gs}
V.~Guillemin and S.~Sternberg, \emph{Geometric asymptotics}, AMS Mathematical
  Surveys, vol.~14, Providence RI, Amer. Soc., 1977.

\bibitem[Ito97]{itoh}
Mitsuhiro Itoh, \emph{Odd dimensional tori and contact structure}, Proc. Japan
  Acad. \textbf{72} (1997), 58--59.

\bibitem[Man98]{m}
Mihaela Manoliu, \emph{Abelian {C}hern-{S}imons theory}, J. Math. Phys.
  \textbf{39} (1998), no.~1, 170--206.

\bibitem[McL10]{mcl2}
B.~D.~K. McLellan, \emph{Gravitational {C}hern-{S}imons and the adiabatic
  limit}, To appear.

\bibitem[MPR93]{mpr}
Joseph Mattes, Micheal Polyak, and Nikolai Reshetikhin, \emph{On invariants of
  3-manifolds derived from abelian groups}, Quantum Topolgy \textbf{3} (1993),
  324--338.

\bibitem[Pon07]{p2}
R.~Ponge, \emph{Noncommutative residue for {H}eisenberg manifolds and
  applications in {CR} and contact geometry}, Mem. Amer. Math. Soc.
  \textbf{252} (2007), 399--463.

\bibitem[RS73]{rsi}
D.B. Ray and I.M. Singer, \emph{Analytic torsion}, Proc. Symp. Pure math.
  \textbf{23} (1973), 167--182.

\bibitem[RS08]{rs}
M.~Rumin and N.~Seshadri, \emph{Analytic torsions on contact manifolds},
  Preprint on ArXiv: 0802.0123v1, 2008.

\bibitem[Rum94]{r}
M.~Rumin, \emph{Formes diff{\'{e}}rentielles sur les vari{\'{e}}t{\'{e}}s de
  contact}, J. Differential Geometry \textbf{39} (1994), no.~2, 281--330.

\bibitem[Sch79a]{s1}
A.S. Schwarz, \emph{Instantons and fermions in the field of instanton}, Commun.
  Math. Phys. \textbf{64} (1979), 233--268.

\bibitem[Sch79b]{s2}
\bysame, \emph{The partition function of degenerate functional}, Commun. Math.
  Phys. \textbf{67} (1979), 1--16.

\bibitem[Wit89]{w3}
E.~Witten, \emph{Quantum field theory and the {J}ones polynomial}, Commun.
  Math. Phys. \textbf{121} (1989), no.~3, 351--399.

\bibitem[Wit92]{w2}
\bysame, \emph{Two dimensional gauge theories revisted}, Commun. Math. Phys.
  \textbf{9} (1992), no.~4, 303--368.

\end{thebibliography}

\end{document}